\documentclass[a4paper, reqno]{amsart}

\binoppenalty=1000
\relpenalty=800

\textwidth=14cm
\textheight=22cm
\calclayout

\usepackage{microtype}
\usepackage{amscd}
\usepackage{amsmath,amssymb,amsfonts}
\usepackage{xstring}
\usepackage{xcolor}
\usepackage[mathscr]{euscript}
\usepackage{xypic}
\usepackage[cmtip,all,2cell]{xy}
\entrymodifiers={+!!<0pt,\fontdimen22\textfont2>}
\usepackage{url}
\usepackage{latexsym}
\usepackage{amsthm}
\usepackage[latin1]{inputenc}
\usepackage{enumitem}
\usepackage[colorlinks=true, linkcolor={blue!50!black}, pdfhighlight=/O, ocgcolorlinks=true]{hyperref}
\usepackage{graphicx}
\usepackage{color}

\numberwithin{equation}{section}
\theoremstyle{plain}
\newtheorem{theorem}[equation]{Theorem}
\newtheorem*{ntheorem}{Theorem}
\newtheorem{lemma}[equation]{Lemma}
\newtheorem{proposition}[equation]{Proposition}
\newtheorem{corollary}[equation]{Corollary}

\theoremstyle{definition}
\newtheorem{definition}[equation]{Definition}
\newtheorem{variant}[equation]{Variant}
\newtheorem{construction}[equation]{Construction}
\newtheorem{example}[equation]{Example}
\newtheorem{remark}[equation]{Remark}
\newtheorem*{nremark}{Remark}
\newtheorem*{warning}{Warning}

\makeatletter
\renewcommand\paragraph{\@startsection{paragraph}{4}{\z@}%
                                    {2.25ex \@plus1ex \@minus.2ex}%
                                    {-0.5em}%
                                    {\normalfont\normalsize\bfseries}}
\makeatother

\newcommand{\mf}[1]{\mathfrak{#1}}
\newcommand{\mc}[1]{\mathcal{#1}}
\newcommand{\ul}[1]{\underline{#1}}
\newcommand{\ol}[1]{\overline{#1}}
\newcommand{\cat}[1]{
\StrLen{#1}[\mystrlen]
\ifnum\mystrlen=1 \mathscr{#1}
\else \mathrm{#1}
\fi}
\newcommand{\ope}[1]{
\StrLen{#1}[\mystrlengt]
\ifnum\mystrlengt=1 \mathscr{#1}
\else \mathrm{#1}
\fi
}
\newcommand{\Set}[0]{\cat{Set}}

\newcommand{\colim}{\operatornamewithlimits{\mathrm{colim}}}
\newcommand{\hocolim}{\operatornamewithlimits{\mathrm{hocolim}}}

\newcommand{\dau}[0]{\partial}

\newcommand{\mm}[1]{\mathrm{#1}}
\newcommand{\Hom}[0]{\mm{Hom}}
\newcommand{\Map}[0]{\mm{Map}}
\renewcommand{\rto}[1]{\stackrel{#1}{\rt}}

\newcommand{\rt}[0]{\rightarrow}
 
\newcommand{\spec}[0]{\mm{Spec}}
\newcommand{\coo}[0]{\mc{C}^\infty}

\newcommand{\Fun}[0]{\cat{Fun}}
\newcommand{\Del}[0]{\mm{\Delta}}
\newcommand{\op}[0]{\mm{op}}
\newcommand{\dR}[0]{\mm{dR}}
\newcommand{\Mod}[0]{\mm{Mod}}
\newcommand{\dgMod}[0]{\Mod^{\dg}}
\newcommand{\sbullet}[0]{{\scriptscriptstyle \bullet}}
\newcommand{\Rep}[0]{\cat{Rep}}
\newcommand{\dgRep}[0]{\Rep^{\dg}}
\newcommand{\sgeq}[0]{{\scriptscriptstyle \geq}}

\newcommand{\dgCAlg}[0]{\CAlg^\mm{\dg}}
\newcommand{\dg}[0]{\mm{dg}}
\newcommand{\CAlg}[0]{\cat{CAlg}}
\newcommand{\LieAlgd}[0]{\cat{LieAlgd}}
\newcommand{\dgLieAlgd}[0]{\LieAlgd^{\dg}}
\newcommand{\LooAlgd}[0]{\cat{L_\infty Algd}}
\newcommand{\dgLooAlgd}[0]{\LooAlgd^{\dg}}
\newcommand{\dgLooAlg}[0]{\cat{L_\infty Alg}^{\dg}}
\newcommand{\dgLie}[0]{\cat{Lie}^{\dg}}
\newcommand{\Env}[0]{\ope{Env}}
\newcommand{\rEnv}[0]{\ol{\Env}}

\title{Homotopical algebra for Lie algebroids}
\author{Joost Nuiten}
\email{joost.nuiten@gmail.com}
\address{Mathematical Institute\\ Utrecht University\\ P.O. Box 80010\\ 3508 TA Utrecht\\ The Netherlands.}
\date{February 1, 2019}
\thanks{This is a post-peer-review, pre-copy-edit version of an article published in Applied Categorical Structures. The final authenticated version is available on-line at: \url{https://doi.org/10.1007/s10485-019-09563-z}}

\begin{document}
\begin{abstract}
We construct Quillen equivalent semi-model structures on the categories of dg-Lie algebroids and $L_\infty$-algebroids over a commutative dg-algebra in characteristic zero. This allows one to apply the usual methods of homotopical algebra to dg-Lie algebroids: for example, every Lie algebroid can be resolved by dg-Lie algebroids that arise from dg-Lie algebras, i.e.\ whose anchor map is zero. As an application, we show how Lie algebroid cohomology is represented by an object in the homotopy category of dg-Lie algebroids.
\end{abstract}

\maketitle

\section{Introduction}
The purpose of this paper is to provide a model-categorical description of the homotopy theory of \emph{differential graded Lie algebroids} over a commutative dg-algebra of characteristic zero. Just as Lie algebroids are frequently used to describe infinitesimal structures in algebraic \cite{rin63} or differential geometry \cite{mac87}, such dg-Lie algebroids can be used to describe infinitesimal structures in \emph{derived} geometry. In particular, our main reason to develop a homotopy theory of dg-Lie algebroids is to use it to study the role of dg-Lie algebroids in deformation theory. 

To motivate this, let us start by considering dg-Lie algebroids over a field $k$ of characteristic zero, which are simply dg-Lie algebras. A fundamental principle in deformation theory, tracing back to the work of Deligne and Drinfeld, asserts that for any point in a moduli space over $k$, its formal neighbourhood is controlled by a dg-Lie algebra. This idea has proven to give a very effective and concrete method for describing the infinitesimal behaviour of moduli spaces, which applies in many situations (see e.g.\ \cite{gol88,hin98,kon03}). Nowadays, a precise formulation of the equivalence between dg-Lie algebras and formal deformation problems is provided in terms of \emph{homotopy theory}: work of Pridham \cite{pri10} and Lurie \cite{lur11X} establishes an equivalence between the homotopy theory of dg-Lie algebras and a certain homotopy theory of formal moduli problems over $k$.

One can try to extend these ideas to more general commutative dg-algebras $A$ of characteristic zero: given a map $\spec(A)\rt X$ from an affine (derived) scheme to a moduli space, one can try to describe a formal neighbourhood of $\spec(A)$ inside $X$ in terms of a dg-Lie algebroid over $A$. The recent work of Gaitsgory and Rozenblyum \cite{gai16} makes extensive use of this viewpoint on Lie algebroids: they essentially \emph{define} Lie algebroids to be formal moduli problems over $\spec(A)$ and develop their theory in these terms. 

This paper serves as a complement to this work, and provides a rigid, point-set model for the homotopy theory of Lie algebroids in terms of a dg-version of the usual notion of a Lie algebroid \cite{rin63}. More precisely, the main result of this paper is the following:
\begin{ntheorem}
Let $A$ be a commutative dg-algebra over a field of characteristic zero. The category of dg-Lie algebroids carries a semi-model structure, in which a map is a weak equivalence (fibration) if it is a quasi-isomorphism (degreewise surjective). 
\end{ntheorem}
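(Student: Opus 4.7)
The strategy is to transfer a model structure from dg-$A$-modules up to dg-Lie algebroids along a pair of adjunctions. First, I would consider the category $\mathcal{C}$ of dg-$A$-modules equipped with an anchor map to the tangent complex $T_A = \mathrm{Der}_k(A, A)$; it carries a projective model structure inherited from dg-$A$-modules, where weak equivalences and fibrations are detected on the underlying modules. Next, the forgetful functor $U: \dgLieAlgd \to \mathcal{C}$ admits a left adjoint $F$, the \emph{free Lie algebroid} functor, which can be built as a suitable quotient of the free graded Lie algebra enforcing $A$-linearity and the Leibniz rule $[x, am] = a[x,m] + \rho(x)(a)\,m$. This exhibits $\dgLieAlgd$ as the category of algebras over a monad on $\mathcal{C}$.

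I would then apply the standard semi-model variant of Quillen's transfer theorem. Declare a morphism in $\dgLieAlgd$ to be a weak equivalence (resp.\ fibration) if and only if $U$ sends it to one in $\mathcal{C}$, and take generating (trivial) cofibrations to be the images under $F$ of those of $\mathcal{C}$. The smallness hypotheses needed for the small object argument are immediate since $U$ preserves filtered colimits and $\mathcal{C}$ is locally presentable. The only non-formal input left is the acyclicity of pushouts along generating trivial cofibrations, starting from cofibrant objects.

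The main obstacle is exactly this acyclicity condition: given a cofibrant dg-Lie algebroid $L$ and a pushout $L \to L \sqcup_{F(M)} F(M')$ along $F$ applied to a generating trivial cofibration $(M, \rho) \hookrightarrow (M', \rho')$ in $\mathcal{C}$, I must show the resulting map is a quasi-isomorphism. Unlike for dg-Lie algebras over a field, pushouts of dg-Lie algebroids have no simple additive description, and must be analyzed via a Poincar\'e--Birkhoff--Witt filtration. The key input is the Rinehart PBW theorem: for a dg-Lie algebroid $L$ whose underlying $A$-module is projective (a consequence of cofibrancy), the universal enveloping algebra $\Env(L)$ carries an order filtration whose associated graded is the symmetric $A$-algebra on $L$. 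This filtration is compatible with coproducts and pushouts along free extensions, and its associated graded pieces can be expressed in terms of $\mathrm{Sym}_A$ applied to (cones of) the generating modules, tensored over $A$ with symmetric powers of the underlying module of $L$. Since cofibrant dg-Lie algebroids have cofibrant underlying $A$-modules, and $\mathrm{Sym}_A$ preserves quasi-isomorphisms between cofibrant $A$-modules in characteristic zero, each subquotient is acyclic, and hence so is the total map. The necessity of this cofibrancy hypothesis on the domain is exactly what limits the outcome to a semi-model structure rather than a full model structure.
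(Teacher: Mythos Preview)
Your overall strategy matches the paper's: transfer along the free--forgetful adjunction $F\colon \dgMod_A/T_A \rightleftarrows \dgLieAlgd_A\colon U$, reduce to checking that pushouts of generating trivial cofibrations along cofibrant objects are weak equivalences, and control such a pushout $\mf{g}\to\mf{h}=\mf{g}\amalg_{F(M)}F(M')$ by a filtration whose associated graded is tractable.

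The gap is in the filtration step. Rinehart's PBW theorem gives you the order filtration on the enveloping \emph{algebra} $\mc{U}(\mf{h})$ with associated graded $\mm{Sym}_A(\mf{h})$, but this does not restrict to anything useful on $\mf{h}$ itself: $\mf{h}$ sits in PBW degree $\leq 1$, so the filtration is trivial there. What you actually need is a \emph{weight} filtration on $\mf{h}$ by the number of new generators from $M'/M$. The paper constructs this by working in weakly filtered objects (declaring $\mf{g}$ and $M$ to have weight $0$ and $M'/M$ weight $1$), and the crucial observation is that on the associated graded the anchor of the new generators becomes \emph{zero}, since $T_A$ lives entirely in weight $0$. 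This reduces the computation to a coproduct $\mf{g}\amalg F(M'/M\rto{0}T_A)$, which is governed not by the enveloping algebra but by the reduced enveloping \emph{operad} $\rEnv_\mf{g}$: one finds $\mm{gr}^q(\mf{h})\cong \rEnv_\mf{g}(q)\otimes_{\Sigma_q\ltimes A^{\otimes q}}(M'/M)^{\otimes q}$ for $q\geq 1$. These are acyclic when $M'/M$ is. Your description of the associated graded as ``$\mm{Sym}_A$ of cones tensored with symmetric powers of $L$'' does not match this; the higher-arity pieces of the enveloping operad are genuinely different from $\mm{Sym}_A^q(\mf{g})$.

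A second subtlety you glossed over: one must verify that the weight filtration on $\mf{h}$ actually consists of \emph{injections}. This fails in general and is exactly where cofibrancy of $\mf{g}$ enters. The paper uses the condition that, forgetting differentials, $\mf{g}$ is a retract of a free Lie algebroid on a graded vector space (automatic for cofibrant $\mf{g}$), which makes the filtration honest. Your appeal to $A$-projectivity of the underlying module is in the same spirit but would need to be made precise in this role.
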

\begin{warning}
It has been asserted in \cite{vez13} that the category of dg-Lie algebroids carries an actual model structure. However, there is a gap in the argument of loc.\ cit: it does not check whether any dg-Lie algebroid can be replaced by a fibrant dg-Lie algebroid. This condition often holds trivially, e.g.\ for algebras over dg-operads, but it does \emph{not} hold for the category of dg-Lie algebroids (see Example \ref{ex:counter}). In particular, this counterexample shows that, contrary to \cite{vez13}, the quasi-isomorphisms and surjections \emph{do not define an actual model structure} on the category of dg-Lie algebroids. 
\end{warning}
Our proof method is based on an analysis of pushouts of generating trivial cofibrations of dg-Lie algebroids. This analysis proceeds along the same lines as for algebras over operads, with one exception: the pushout of a generating trivial cofibration need not be an injection in general. This makes our argument slightly more complicated than usual, and is also the main reason why dg-Lie algebroids do not form a genuine model category (see Example \ref{ex:counter}). 
\begin{nremark}[Global approach]
Instead of considering dg-Lie algebroids over a fixed $A$, one can also consider the category of pairs $(A, \mf{g})$ where $A$ is a cdga and $\mf{g}$ is a dg-Lie algebroid over $A$. Such pairs are the algebras over a certain 2-coloured operad \cite{laa04} and therefore admit a model structure. This `global' model structure is not so useful for the deformation-theoretic applications in \cite{nui17c}. For instance, the functor sending $(A, \mf{g})\mapsto A$ is not particularly well-behaved (i.e.\ it is not a (op-)fibration), so the global model structure does not restrict to its fibres.
\end{nremark}
In \cite{nui17c}, we show that the homotopy theory of dg-Lie algebroids over (cofibrant) $A$ provided by the above theorem is equivalent to the homotopy theory of formal moduli problems over $A$. Informally, this means that dg-Lie algebroids can indeed be used as algebraic models for the formal neighbourhoods of $\spec(A)$ inside moduli spaces. The fact that dg-Lie algebroids indeed behave like `algebraic objects' from the viewpoint of homotopy theory is made precise by the following result:
\begin{ntheorem}
Consider the forgetful functor $\dgLieAlgd_A\rt \dgMod_A/T_A$, which sends a dg-Lie algebroid to its anchor map. This is a right Quillen functor that detects weak equivalences and preserves all sifted homotopy colimits.
\end{ntheorem}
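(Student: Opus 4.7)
The plan is to produce a left adjoint $F\dashv U$, verify the Quillen property, and then address detection and homotopy-colimit preservation in turn. For the left adjoint, I would construct the free dg-Lie algebroid generated by a map $f\colon M\rt T_A$ directly, for instance as a quotient of the universal enveloping construction applied to the free dg-Lie algebra on $M$, with anchor determined by $f$. Alternatively, since $\dgLieAlgd_A$ is locally presentable (as follows from the monadic presentation underlying the previous theorem) and $U$ preserves all limits, the adjoint functor theorem supplies $F$. The right Quillen property is then immediate: both fibrations and trivial fibrations in $\dgLieAlgd_A$ are defined as underlying degreewise surjections, respectively acyclic surjections, of dg-$A$-modules, and the corresponding classes in the slice $\dgMod_A/T_A$ are characterized by exactly the same conditions on underlying maps.

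Detection of weak equivalences is equally immediate: a morphism of dg-Lie algebroids is by definition a quasi-isomorphism iff its underlying dg-$A$-module morphism is, and the latter is precisely the weak equivalence condition in the slice. For the 1-categorical statement that $U$ preserves sifted colimits, I would use that $U$ is monadic with monad $T=UF$ whose underlying endofunctor on $\dgMod_A/T_A$ is assembled from tensorial pieces coming from the free Lie algebra functor and hence preserves filtered colimits and reflexive coequalizers; this passes to $\dgLieAlgd_A$ by the standard monadicity argument.

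The principal obstacle is upgrading this to preservation of sifted \emph{homotopy} colimits. Given a sifted $I$ and a diagram $L^\bullet\colon I\rt \dgLieAlgd_A$, I would take a projectively cofibrant replacement $\wtilde L^\bullet\rt L^\bullet$, so that $\hocolim_I L^\bullet\simeq \colim_I\wtilde L^\bullet$, and then 1-categorical preservation of sifted colimits yields $U(\hocolim_I L^\bullet)\simeq \colim_I U(\wtilde L^\bullet)$. It remains to check that this 1-colimit models $\hocolim_I U(L^\bullet)$ in $\dgMod_A/T_A$. This is a flatness assertion about cofibrant Lie algebroids: I would analyze a transfinite cell presentation of $\wtilde L^\bullet$ and invoke a Poincar\'e--Birkhoff--Witt-type decomposition to write the free Lie algebroid on a cofibrant $A$-module as a direct sum of explicit cofibrant $A$-modules. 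This shows that each free Lie algebroid cell attachment becomes a cofibration on underlying modules, and iterating along the cell filtration delivers that $U(\wtilde L^\bullet)$ is cofibrant enough (pointwise cofibrant, with transition maps that are cofibrations) for its sifted 1-colimit to coincide with its sifted homotopy colimit, completing the argument.
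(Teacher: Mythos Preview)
Your overall strategy matches the paper's: reduce to showing that $U$ applied to a projectively cofibrant sifted diagram yields a diagram whose strict colimit computes its homotopy colimit, and prove this by induction on a cell presentation. However, your final paragraph glosses over two genuine difficulties that absorb most of the work.

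First, ``pointwise cofibrant, with transition maps that are cofibrations'' is not the right condition for a general sifted $\cat{J}$ (there are no transition maps in $\Del^\op$, for instance). The paper instead introduces the notion of a \emph{good} $\cat{J}$-diagram---pointwise cofibrant, with cofibrant colimit, and with the comparison map $\hocolim\to\colim$ an equivalence---and establishes closure properties of good maps under pushouts, transfinite composition, Quillen bifunctors, and restriction along the diagonal $\cat{J}\to\cat{J}^{\times n}$.

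Second, and more seriously, your induction will not close as stated. When you attach a free cell to a cofibrant $\mf{g}$, the underlying module of the pushout $\mf{h}=\mf{g}\amalg_{F(W)}F(V)$ is governed not just by the underlying module of $\mf{g}$ but by the entire symmetric sequence $\Env_\mf{g}$: the natural filtration on $\mf{h}$ has associated graded $\Env_\mf{g}\circ_A(V/W)$, which involves \emph{all} arities $\Env_\mf{g}(q)$. The paper therefore strengthens the inductive hypothesis to assert that each $\Env_\mf{g}(p)$ is good, and only then does the cell-attachment step go through. There is a further subtlety specific to Lie algebroids that your PBW appeal does not address: because newly adjoined generators may act nontrivially on $A$ through the anchor, the filtration on $\mf{h}$ is not obviously by monomorphisms (contrast with algebras over an operad). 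The paper handles this by working in weakly filtered algebroids and proving that the filtration is honest provided $\mf{g}$ is, ignoring differentials, a retract of a free algebroid---a condition satisfied by cofibrant objects. A PBW description of the free algebroid alone settles neither of these points.
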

Concretely, this provides a simple description of the homotopy colimit of a simplicial diagram of dg-Lie algebroids (see Example \ref{ex:totalisation}). On a more formal level, this theorem guarantees that the homotopy theory of dg-Lie algebroids is well behaved. For example, the free-forgetful adjunction $\dgMod_A/T_A\leftrightarrows \dgLieAlgd_A$ induces a monadic adjunction between $\infty$-categorical localisations \cite[Section 4.7]{lur16}. This formal property of the homotopy theory of dg-Lie algebroids plays an important technical role in \cite{nui17c} (cf.\ also \cite[Section 2]{lur11X}). Similarly, the right Quillen functor to $A$-linear dg-Lie algebras
$$\xymatrix{
\dgLieAlgd_A\ar[r] & \dgLie_A;\hspace{4pt} \mf{g}\ar@{|->}[r] & \ker\Big(\rho\colon \mf{g}\rt T_A\Big)
}$$
also detects weak equivalences and preserves sifted homotopy colimits. This means that dg-Lie algebroids can be considered as algebras for some monad on the $\infty$-category of $A$-linear Lie algebras (cf.\ \cite{gai16}). In particular, it follows that every dg-Lie algebroid admits a simplicial resolution by Lie algebroids arising from Lie algebras, i.e.\ whose anchor map is null-homotopic (by Lie algebroid maps). Such resolutions can be used to reduce statements about dg-Lie algebroids to the case of dg-Lie algebras, which is sometimes easier.

\paragraph{Outline}
The paper is outlined as follows: after recalling some preliminaries in Section \ref{sec:prelim}, we describe the above theorems, together with some variants and immediate consequences, in Section \ref{sec:main}. For example, both theorems have analogues for $L_\infty$-algebroids as well. The proofs of the theorems are contained in Section \ref{la:sec:technicalities}.

In the remainder of the text, we discuss a few model-categorical tools that one can use to study dg-Lie algebroids. For example, we give a concrete cofibrant replacement of dg-Lie algebroids and $L_\infty$-algebroids in Section \ref{la:sec:cofibrantreplacement}, which can be used (in certain situations) to present derived mapping spaces in terms of `$\infty$-morphisms'. In Section \ref{sec:appl}, we use this to identify the (reduced) Lie algebroid cohomology (see e.g.\ \cite{rin63})
$$
\ol{\mm{H}}{}^n(\mf{g}; A) \cong \big[\mf{g}, T_A\oplus A[n-1]\big]
$$
with the set of homotopy classes of maps into the square zero extension of the tangent Lie algebroid by a shifted copy of $A$. Similarly, we show that the Weil algebra of a Lie algebroid $\mf{g}$ (see e.g.\ \cite{aba11}) simply computes the Lie algebroid cohomology of its free loop space $\mc{L}\mf{g}$. The mixed structure on the Weil algebra can then be thought of as arising from the $S^1$-action on $\mc{L}\mf{g}$ by rotation of loops.

\paragraph{Conventions} Throughout, we work over a field $k$ of characteristic zero, so that a chain complex always means a chain complex of $k$-vector spaces. We use homological conventions for chain complexes, i.e.\ the differential $\dau$ is of degree $-1$. When $V$ is a chain complex, we denote its $n$-fold suspension by $V[n]$ and its cone by $V[0, 1]$.

\paragraph{Acknowledgements}
I would like to thank the anonymous referee, whose many questions and comments have helped greatly improving the clarity and exposition of the paper. This work was supported by the Nederlandse Organisatie voor Wetenschappelijk Onderzoek (NWO).

\section{Preliminaries}\label{sec:prelim}
The purpose of this section is twofold: on the one hand, we will recall the notion of a dg-Lie algebroid over a commutative dg-algebra and on the other hand, we will recall the notion of a (left) semi-model category.

\paragraph{DG-Lie-algebroids}
Throughout, fix a (unital) commutative dg-algebra $A$ over $k$ and let $T_A = \mm{Der}_k(A, A)$ be the chain complex of derivations of $A$. This complex carries the structure of a dg-$A$-module, given by pointwise multiplication in $A$, as well as the structure of a dg-Lie-algebra over $k$, with Lie bracket given by the commutator bracket. Following \cite{vez13}, we define a dg-Lie algebroid over $A$ as follows:
\begin{definition}\label{la:def:lralgebra}
A \emph{dg-Lie algebroid} $\mf{g}$ over $A$ is an (unbounded) dg-$A$-module $\mf{g}$, equipped with a $k$-linear dg-Lie algebra structure and an \emph{anchor map} $\rho\colon \mf{g}\rt T_A$ such that
\begin{enumerate}
\item $\rho$ is both a map of dg-$A$-modules and dg-Lie algebras.
\item the failure of the Lie bracket to be $A$-bilinear is governed by the Leibniz rule
$$
[x, a\cdot y] = (-1)^{|x|\cdot |a|} a[x, y] + \rho(x)(a)\cdot y.
$$
\end{enumerate}
Let $\dgLieAlgd_A$ be the category of dg-Lie algebroids over $A$, with maps given by $A$-linear maps over $T_A$ that preserve the Lie bracket.
\end{definition}
These objects are also known as differential graded \emph{Lie-Rinehart algebras}, which were originally considered in \cite{rin63} in the non-dg setting.
\begin{example}\label{ex:atiyah}
Any dg-$A$-module $E$ gives rise to an \emph{Atiyah dg-Lie algebroid} $\mm{At}(E)$ over $A$, which can be described as follows: a degree $n$ element of $\mm{At}(E)$ is a tuple $(v, \nabla_v)$ consisting of a derivation $v\colon A\rt A$ (of degree $n$), together with a $k$-linear map $\nabla_v\colon E\rt E$ (of degree $n$) such that
$$
\nabla_v(a\cdot e) = v(a)\cdot e + (-1)^{|a|\cdot n} a\cdot \nabla_v(e)
$$
for all $a\in A$ and $e\in E$. This becomes a dg-$A$-module under pointwise multiplication and a dg-Lie algebra under the commutator bracket. The anchor map is the obvious projection $\mm{At}(E)\rt T_A$ sending $(v, \nabla_v)$ to $v$.
\end{example}
\begin{example}
Similarly, suppose that $E\in \dgMod_A$ has the structure of an algebra over a $k$-linear dg-operad $\ope{P}$. Then there is a sub dg-Lie algebroid of $\mm{At}(E)$ consisting of the tuples $(v, \nabla_v)$ where $\nabla_v$ is a $\ope{P}$-algebra derivation.
\end{example}

\paragraph{$L_\infty$-algebroids}
One frequently encounters situations in which a chain complex does not admit a dg-Lie algebra structure, but instead has a homotopy coherent Lie algebra structure, i.e.\ an $L_\infty$-structure.
\begin{definition}[{cf.\ \cite[Section 13.2.12]{lod12}}]
Let $\mf{g}$ be a chain complex. An \emph{$L_\infty$-structure} on $\mf{g}$ is given by a collection of graded anti-symmetric maps $[-, ..., -]\colon \mf{g}^{\otimes n}\rt \mf{g}$ of (homological) degree $n-2$, for each $n\geq 2$, satisfying a sequence of Jacobi identities
$$
J^k(x_1, \cdots, x_k)=0 \qquad \quad k\geq 2.
$$
Here $J^k(x_1, \cdots, x_k)$ is the $k$-th \emph{Jacobiator}, given by
\begin{equation}\label{eq:jacobiator} \mathop{\sum}\limits_{i+j = k+1} (-1)^{i(j-1)} \mathop{\sum}\limits_{\sigma} (-1)^{\sigma}\pm \big[\big[x_{\sigma(1)}, \cdots , x_{\sigma(i)}\big]_i, x_{\sigma(i+1)}, \cdots, x_{\sigma(k)}\big]_{j},
\end{equation}
where $\sigma$ runs over the $(i, k-i)$-unshuffles, $\pm$ denotes the usual Koszul sign due to the permutation of the variables $x_i$ and the 1-ary bracket $[-]\colon \mf{g}\rt\mf{g}$ is given by $[x]=\dau_{\mf{g}[1]}x = -\dau_\mf{g} x$.
\end{definition}
\begin{remark}\label{rem:nl}
By a (linear) \emph{map of $L_\infty$-algebras} $\mf{g}\rt \mf{h}$ we will mean a map of chain complexes preserving the brackets, i.e.\ a map of algebras over the $L_\infty$-operad. There is a more general notion of map between $L_\infty$-algebras, known as an $\infty$-morphism or $L_\infty$-morphism. We will come back to this in Section \ref{la:sec:cofibrantreplacement}, cf.\ Definition \ref{la:def:nonlinearmaplooalgebras}.
\end{remark}
There is a map of operads $L_\infty\rt \mm{Lie}$, realising a Lie algebra as an $L_\infty$-algebra whose $n$-ary brackets vanish for $n\geq 3$. This map is a quasi-isomorphism, so that any $L_\infty$-algebra is quasi-isomorphic to a dg-Lie algebra, which is however typically \emph{much larger}. Conversely, any chain complex that is quasi-isomorphic to a dg-Lie algebra can be endowed with an $L_\infty$-structure, by the homotopy transfer theorem \cite[Section 10.2]{lod12}.

Similarly, there are situations where a given dg-$A$-module $\mf{g}$ may not support a strict dg-Lie algebroid structure, but does admit some homotopy coherent refinement thereof. Such homotopy coherent structures can be useful, for example, when $\mf{g}$ arises from a complex of \emph{vector bundles}, i.e.\ a complex of finite rank projective $A$-modules (cf.\ Remark \ref{rem:finiterank}). A \emph{strict} dg-Lie algebroid model for $\mf{g}$ can then often be found only after replacing $\mf{g}$ by a much larger quasi-isomorphic complex, which is more complicated to understand geometrically (cf.\ Corollary \ref{la:cor:equivalencetohomotopyLRalgebras}).

There are several possible homotopy coherent weakenings of the structure of a dg-Lie algebroid, depending on which part of the structure one wants to relax. For example, one can weaken the Lie bracket, the anchor map (see Remark \ref{rem:liealgdwithnlanchor}) or the $A$-module structure. For our model-categorical purposes, it is convenient to only weaken the Lie algebra structure to an $L_\infty$-structure.

\begin{definition}[{see e.g.\ \cite[Definition 4]{bon13}, \cite[Definition 2.1]{pym16}}]\label{la:def:loorinehartalgebras}
We define an \emph{$L_\infty$-algebroid} over $A$ to be a dg-$A$-module $\mf{g}$, equipped with the structure of a ($k$-linear) $L_\infty$-algebra and an \emph{anchor map} $\rho\colon \mf{g}\rt T_A$, such that
\begin{enumerate}
\item $\rho$ is a map of dg-$A$-modules and a (linear) map of $L_\infty$-algebras.
\item the brackets satisfy the Leibniz rules
\begin{align}\label{la:eq:leibniz}
[x, a\cdot y] &= (-1)^{ax} a[x, y] + \rho(x)(a)\cdot y\\
[x_1, ..., a\cdot x_n] &= (-1)^{an}(-1)^{a(x_1+...+x_{n-1})} a[x_1, ..., x_n] & \quad\quad n\geq 3.\nonumber
\end{align}
\end{enumerate}
Let $\dgLooAlgd_A$ be the category of $L_\infty$-algebroids over $A$, with maps given by $A$-linear maps over $T_A$ that are also (linear) maps of $L_\infty$-algebras.
\end{definition}
To obtain a well-behaved category of $L_\infty$-algebroids, e.g.\ with limits and colimits, it is necessary to only work with linear, structure-preserving maps between $L_\infty$-algebroids. There is also a more homotopy coherent notion of \emph{$\infty$-morphism} between $L_\infty$-algebroids. We will come back to this in Section \ref{la:sec:cofibrantreplacement}.
\begin{example}
$L_\infty$-algebroids arise naturally as extensions of Lie algebroids classified by higher Lie algebroid cocycles \cite{she17} (see Example \ref{la:ex:loofromcocycle} for more details).
\end{example}
\begin{example}[Action $L_\infty$-algebroids]\label{la:ex:actionalgebroids}
Let $\rho\colon \mf{g}\rt T_A$ be a map of $L_\infty$-algebras over $k$. Then $A\otimes \mf{g}$ has the structure of an $L_\infty$-algebroid, with anchor map given by the $A$-linear extension of $\rho$ and with brackets given by
\begin{align*}
[a\otimes x, b\otimes y] &= \pm ab\otimes [x, y] + a\cdot \rho(x)(b)\otimes y - (\pm) b\cdot \rho(y)(a)\otimes x \\
[a_1 \otimes x_1, ..., a_n \otimes x_n] & = \pm a_1...a_n\otimes [x_1, ..., x_n].
\end{align*}
where $\pm$ is the usual Koszul sign. The only non-trivial condition to verify is the Jacobi identity; for this it suffices to show that each Jacobiator $J^k$ is $A$-linear in each of its variables, which can be done by explicit computation.
\end{example}
\begin{example}\label{ex:ideal}
Suppose that $\mf{g}$ is an $L_\infty$-algebroid and let $J\subseteq \mf{g}$ be a \emph{dg-ideal}, i.e.\ a sub-$A$-module of the kernel of the anchor map which is closed under the brackets and the differential. Then $\mf{g}/J$ inherits the structure of an $L_\infty$-algebroid.
\end{example}
\begin{remark}\label{rem:finiterank}
Definition \ref{la:def:loorinehartalgebras} is often used in the following setting. Let $A$ be an ordinary $k$-algebra (concentrated in degree zero) and $\mf{g}$ a complex of finite rank projective $A$-modules, concentrated in degrees $\geq 0$. An $L_\infty$-algebroid structure on $\mf{g}$ is then equivalent to the data of a differential on the graded symmetric algebra $\mm{Sym}_A(\mf{g}[1]^\vee)$ \cite[Theorem 2]{bon13}; this differential is just the Chevalley-Eilenberg differential of Equation \eqref{fm:eq:cediff}. 

In the differential-geometric case where $A=\coo(M)$ is the ring of smooth functions on a manifold, this means that $\mf{g}$ is the dg-module of sections of a complex of vector bundles over $M$. The associated graded-free commutative dg-algebras are then also known as `dg-manifolds' or `NQ-supermanifolds' \cite{ale97}, which appear frequently in differential geometry and mathematical physics (see e.g.\ \cite{roy02,sev05}).
\end{remark}
\begin{remark}\label{rem:liealgdwithnlanchor}
A further generalisation of Definition \ref{la:def:loorinehartalgebras} appearing in the literature is the notion of a \emph{sh-Lie-Rinehart algebra} \cite{hue17,kje01,vit14}. Here the anchor map is allowed to be an $\infty$-morphism of $L_\infty$-algebras (as in Definition \ref{la:def:nonlinearmaplooalgebras}). Concretely, this means that $\mf{g}$ acts on $A$ in a homotopy coherent way, encoded by a collection of maps of degree $n-2$
$$\xymatrix{
\rho^{(n)}\colon \mf{g}^{n-1}\otimes A\ar[r] & A & &  n\geq 2
}$$
subject to a sequence of equations. When $n\geq 3$, the higher action map $\rho^{(n)}$ contributes an extra term to the right hand side of the Leibniz rule. 

Definition \ref{la:def:loorinehartalgebras} is the special case where we force $\rho^{(n)}=0$ for $n\geq 3$, so that the Leibniz rule reduces to the simple form of \eqref{la:eq:leibniz} when $n\geq 3$. In the situation of Remark \ref{rem:finiterank}, the $\rho^{(n)}$ vanish for degree reasons and the two notions coincide.

We have chosen to work with Definition \ref{la:def:loorinehartalgebras} instead of sh-Lie-Rinehart algebras because the latter form a category which is ill-behaved for model-categorical purposes; it lacks a terminal object, for example.
\end{remark}
The categories of dg-Lie algebroids and $L_\infty$-algebroids over $A$ fit into a commuting diagram
\begin{equation}\label{la:diag:forgetfulfunctors}\vcenter{\xymatrix{
\dgLieAlgd_A\ar[d]\ar[r] & 
\dgLooAlgd_A\ar[r] \ar[d] & \dgMod_A/T_A\ar[d]\\
\dgLie_k/T_A\ar[r] & \cat{L_\infty Alg}^{\dg}_k/T_A\ar[r] & \dgMod_k/T_A.
}}\end{equation}
The vertical functors forget the $A$-module structure, the left two horizontal functors are inclusions and the right two horizontal functors forget the $L_\infty$-structure. Each of these forgetful functors admits a left adjoint for formal reasons. In fact, the left adjoints to the vertical functors are easily identified: 
\begin{lemma}\label{lem:actionisfree}
The left adjoints to the forgetful functors 
$$
\dgLieAlgd_A\rt \dgLie_k/T_A \qquad \text{and}\qquad \smash{\dgLooAlgd_A\rt \cat{L_\infty Alg}^\dg_k/T_A}
$$
are given by the `action $L_\infty$-algebroid' construction of Example \ref{la:ex:actionalgebroids}. 
\end{lemma}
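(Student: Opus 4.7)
The plan is to verify the adjunction directly by constructing the unit and showing it satisfies the universal property. Since both adjunctions have the same shape, I will describe the argument for the dg-Lie case and indicate the modifications for the $L_\infty$ case at the end.

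First, for a dg-Lie algebra $\mf{g}$ with anchor $\rho\colon\mf{g}\to T_A$, the action Lie algebroid structure on $A\otimes\mf{g}$ defined in Example~\ref{la:ex:actionalgebroids} carries a canonical $k$-linear map $\eta\colon \mf{g}\to A\otimes\mf{g}$ sending $X\mapsto 1\otimes X$. A direct check from the definitions shows that $\eta$ is a map of dg-Lie algebras over $T_A$: compatibility with the bracket is immediate because the $\rho$-correction terms in the action bracket vanish when $a=b=1$, and compatibility with the anchor is built into the construction.

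Next, given any dg-Lie algebroid $\mf{h}$ over $A$ with anchor $\rho_\mf{h}$ and a map $\phi\colon\mf{g}\to\mf{h}$ of dg-Lie algebras over $T_A$, I would define $\tilde\phi\colon A\otimes\mf{g}\to\mf{h}$ by the $A$-linear extension $\tilde\phi(a\otimes X) = a\cdot\phi(X)$. It is automatic that $\tilde\phi$ is $A$-linear, commutes with the differential, and satisfies $\rho_\mf{h}\circ\tilde\phi = \rho$ (the latter because $\phi$ lies over $T_A$ and $\rho$ on $A\otimes\mf{g}$ is itself the $A$-linear extension of $\rho$). The only nontrivial point is bracket-preservation: using the Leibniz rule in $\mf{h}$,
$$
[\tilde\phi(a\otimes X),\tilde\phi(b\otimes Y)] = [a\phi(X),b\phi(Y)] = \pm ab[\phi(X),\phi(Y)] + a\rho_\mf{h}(\phi(X))(b)\phi(Y) \mp b\rho_\mf{h}(\phi(Y))(a)\phi(X),
$$
and this matches $\tilde\phi$ applied to the bracket $[a\otimes X,b\otimes Y]$ given in Example~\ref{la:ex:actionalgebroids}, precisely because $\phi$ preserves the bracket and $\rho_\mf{h}\circ\phi=\rho$. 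Uniqueness of $\tilde\phi$ follows because any dg-Lie algebroid map extending $\phi$ along $\eta$ is forced to be $A$-linear.

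For the $L_\infty$ variant, the same recipe works. The $n$-ary bracket on $A\otimes\mf{g}$ for $n\geq 3$ is simply the $A$-multilinear extension of $[-,\ldots,-]$ on $\mf{g}$ (with no $\rho$-correction), so bracket-preservation of the $A$-linear extension $\tilde\phi$ reduces to bracket-preservation of $\phi$; the 2-ary case is identical to the Lie case above. The main obstacle, if any, is really just careful bookkeeping of Koszul signs in the 2-ary Leibniz identity; the formal adjunction structure itself is straightforward.
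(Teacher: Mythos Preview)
Your proof is correct and follows essentially the same approach as the paper's: both establish the bijection between $k$-linear maps $\mf{g}\to\mf{h}$ over $T_A$ and $A$-linear maps $A\otimes\mf{g}\to\mf{h}$ over $T_A$, and verify that bracket preservation transfers across this bijection via the Leibniz rule. The paper's proof simply states this correspondence in one sentence, whereas you have written out the verification of bracket preservation explicitly.
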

\begin{proof}
Let $\mf{g}\rt T_A$ be a $k$-linear $L_\infty$-algebra over $T_A$ and let $\mf{h}$ be an $L_\infty$-algebroid. A $k$-linear map $f\colon \mf{g}\rt \mf{h}$ over $T_A$ determines a unique $A$-linear map $g\colon A\otimes \mf{g}\rt \mf{h}$, which preserves the brackets if and only if $f$ preserves the brackets.
\end{proof}
\begin{definition}
We will denote by
$$\xymatrix{
\mm{Free}\colon \dgMod_k/T_A\ar[r] & \dgLieAlgd_A & F\colon \dgMod_A/T_A\ar[r] & \dgLieAlgd_A
}$$
the functors taking the free dg-Lie algebroid on a $k$-linear (resp.\ $A$-linear) map $V\rt T_A$. The same notation is employed for free $L_\infty$-algebroids.
\end{definition}
\begin{remark}\label{rem:freealgebroids}
By Lemma \ref{lem:actionisfree}, the free dg-Lie algebroid on a $k$-linear map $V\rt T_A$ is given by the action Lie algebroid $A\otimes \mm{Lie}(V)$ associated to the free Lie algebra on $V$ (and similarly in the $L_\infty$-case). The left adjoint to the forgetful functor $\dgLieAlgd_A\rt \dgMod_A/T_A$ is explicitly described in \cite{kap07}.
\end{remark}

\paragraph{Semi-model categories}
Recall that a (left) \emph{semi-model category} \cite{spi01,fre09} is a bicomplete category $\cat{M}$ equipped with wide subcategories of weak equivalences, cofibrations and fibrations, subject to the following conditions:
\begin{enumerate}
\item The weak equivalences have the two out of three property and the weak equivalences, fibrations and cofibrations are stable under retracts.
\item The cofibrations have the left lifting property with respect to the trivial fibrations. The trivial cofibrations \emph{with cofibrant domain} (i.e.\ with a domain $X$ for which the map $\emptyset \rt X$ is a cofibration) have the left lifting property with respect to the fibrations.
\item Every map can be factored functorially into a cofibration, followed by a trivial fibration. Every map \emph{with cofibrant domain} can be factored functorially into a trivial cofibration followed by a fibration.
\item The fibrations and trivial fibrations are stable under transfinite composition, products and base change.
\end{enumerate}
An adjunction $F\colon \cat{M}\leftrightarrows \cat{N}\colon G$ between two semi-model categories is a \emph{Quillen adjunction} if the right adjoint $G$ preserves fibrations and trivial fibrations. It is a Quillen equivalence when a map $X\rt G(Y)$ is a weak equivalence if and only if its adjoint map $F(X)\rt Y$ is a weak equivalence, for any cofibrant $X\in \cat{M}$ and fibrant $Y\in \cat{N}$.

Most model-categorical constructions can be performed in semi-model categories as well. For example, the category of (co)simplicial diagrams in $\cat{M}$ carries a Reedy semi-model structure, which can be used to define (co)simplicial resolutions and simplicial sets of maps in $\cat{M}$. The latter describe the mapping spaces in the $\infty$-categorical localisation $\cat{M}[W^{-1}]$ of $\cat{M}$ at its weak equivalences. For a detailed description of the basic theory of semi-model categories, we refer to \cite{spi01,fre09}.

\begin{definition}
A semi-model category $\cat{M}$ is \emph{tractable} if its underlying category is locally presentable and if there exist sets of maps \emph{with cofibrant domain} $I$ and $J$ with the property that a map has the right lifting property against $I$ (resp.\ $J$) if and only if it is a trivial fibration (resp.\ a fibration). We refer to the maps in $I$ (resp.\ $J$) as the \emph{generating (trivial) cofibrations}. 
\end{definition}
\begin{remark}
The cofibrations and trivial fibrations in a semi-model category $\cat{M}$ determine each other via the lifting property. When $\cat{M}$ is tractable, the fibrations are characterised by the right lifting property against the trivial cofibrations between cofibrant objects. This becomes particularly useful when considering homotopy limits: when $\cat{M}$ is tractable, the diagram category $\Fun(\cat{I}, \cat{M})$ carries a unique tractable semi-model structure whose cofibrations and weak equivalences are given pointwise. In this case, the adjunction
$$\xymatrix{
\Del\colon \cat{M} \ar@<1ex>[r] & \Fun(\cat{I}, \cat{M})\colon \lim\ar@<1ex>[l]
}$$
is a Quillen pair because the left adjoint preserves cofibrations and trivial cofibrations with cofibrant domain. The right derived functor computes the homotopy limit.
\end{remark}
The main purpose of semi-model structures is that they are easier to transfer along adjunctions:
\begin{lemma}[cf.\ {\cite[Proposition 12.1.4]{fre09}}]\label{app:lem:transferlemma}
Let $F\colon \cat{M}\leftrightarrows \cat{N}\colon G$ be an adjunction between locally presentable categories and suppose that $\cat{M}$ carries a tractable semi-model structure with sets of generating (trivial) cofibrations $I$ and $J$.

Define a map in $\cat{N}$ to be a weak equivalence (fibration) if its image under $G$ is a weak equivalence (fibration) in $\cat{M}$ and a cofibration if it has the left lifting property against the trivial fibrations. Assume that the following condition holds:
\begin{itemize}[leftmargin=*]
\item Let $f\colon A\rt B$ be a map in $\cat{N}$ with cofibrant domain, obtained as a transfinite composition of pushouts of maps in $F(J)$. Then $f$ is a weak equivalence.
\end{itemize}
Then the above classes of maps determine a tractable semi-model structure on $\cat{N}$ whose generating (trivial) cofibrations are given by $F(I)$ and $F(J)$.
\end{lemma}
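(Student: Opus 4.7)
The plan is to construct the semi-model structure on $\cat{N}$ by transferring via the small object argument applied to the sets $F(I)$ and $F(J)$. First I would dispose of the formal closure properties: two-out-of-three for weak equivalences, retract closure of weak equivalences and fibrations, and stability of (trivial) fibrations under transfinite composition, products, and base change all follow immediately from $G$ being a right adjoint together with the corresponding properties in $\cat{M}$. Cofibrations are closed under retracts by their definition via LLP. Both $F(I)$ and $F(J)$ permit the small object argument since $\cat{N}$ is locally presentable.

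Next I would use the adjunction to translate generating classes. A map $p$ in $\cat{N}$ has RLP against $F(I)$ iff $G(p)$ has RLP against $I$ iff $G(p)$ is a trivial fibration, which is the definition of $p$ being a trivial fibration in $\cat{N}$. Since the domains of $J$ are cofibrant in $\cat{M}$, maps in $J$ are trivial cofibrations with cofibrant domain and hence have LLP against all fibrations in $\cat{M}$; by adjunction, maps in $F(J)$ have LLP against all fibrations in $\cat{N}$, and in particular (using $J \subset \cat{M}$-cofibrations) are themselves cofibrations. The converse characterization of fibrations by RLP against $F(J)$ also follows by adjunction. Factorizations are then obtained by the small object argument: for any map, running the argument on $F(I)$ produces a relative $F(I)$-cell complex (a cofibration, by closure of LLP classes under pushout and transfinite composition) followed by a trivial fibration. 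For a map with cofibrant domain, running the argument on $F(J)$ produces a relative $F(J)$-cell complex followed by a fibration; the former is a cofibration, and is a weak equivalence precisely by the hypothesis of the lemma.

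For the remaining lifting axiom, I would use the standard retract argument: given a trivial cofibration $f \colon A \to B$ with $A$ cofibrant and a fibration $p$, factor $f$ via the small object argument for $F(J)$ as $A \to A' \to B$. The first map has LLP against all fibrations (being a relative $F(J)$-cell complex), while the second is a fibration that is also a weak equivalence by two-out-of-three, hence a trivial fibration. Since $f$ is a cofibration, it lifts against $A' \to B$, exhibiting $f$ as a retract of $A \to A'$ and thereby inheriting the lifting property against $p$. The only genuinely nonformal step in the entire argument is the identification of relative $F(J)$-cell complexes with cofibrant domain as weak equivalences, which is exactly the hypothesis imposed by the lemma; everything else is formal transport across the adjunction. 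This is the step one must always check in concrete applications, and will be the main obstacle in later sections when the lemma is applied to dg-Lie and $L_\infty$-algebroids.
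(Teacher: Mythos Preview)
Your proposal is correct and follows essentially the same approach as the paper's proof: both run the small object argument on $F(I)$ and $F(J)$ for the factorizations, and both verify the lifting axiom for trivial cofibrations with cofibrant domain via the standard retract argument (factor through an $F(J)$-cell complex, use the hypothesis and two-out-of-three to make the second factor a trivial fibration, and exhibit the original map as a retract). The paper's proof is terser and omits the routine closure properties you spelled out, but the substance is identical.
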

\begin{proof}
The factorisation axioms follow from the small object argument. The only non-trivial thing to check is the lifting axiom for trivial cofibrations between cofibrant objects against fibrations. If $A\rt B$ is a trivial cofibration between cofibrant objects, we can factor it as an iterated pushout $A\rt \tilde{A}$ of maps in $F(J)$, followed by a fibration $\tilde{A}\rt B$. Since the map $A\rt \tilde{A}$ is a weak equivalence, $\tilde{A}\rt B$ is a trivial fibration and the map $A\rt B$ is a retract of the map $A\rt \tilde{A}$. The latter has the lifting property against the fibrations by definition.
\end{proof}
\begin{example}
Let $\cat{M}$ be a tractable semi-model category and let $\cat{I}$ be a small category. Then the category $\mm{Fun}(\cat{I}, \cat{M})$ carries the \emph{projective} semi-model structure, in which a map is a weak equivalence (fibration) if it is a levelwise weak equivalence (fibration) in $\cat{M}$. The functor $\colim\colon \mm{Fun}(\cat{I}, \cat{M})\rt \cat{M}$ is a left Quillen functor, whose left derived functor takes homotopy colimits.
\end{example}
\begin{definition}\label{def:projectivemodelstructure}
A diagram $F\colon \cat{I}\rt \cat{M}$ with values in a tractable semi-model category is \emph{projectively cofibrant} if it is cofibrant for this projective semi-model structure. In particular, this means that $\colim F$ is a model for the homotopy colimit of $F$.
\end{definition}

\section{Main results}\label{sec:main}
In this section we will state our main results and collect some immediate consequences, leaving the proofs to Section \ref{la:sec:technicalities}. First of all, consider the free-forgetful adjunction
$$\xymatrix{
F\colon \dgMod_A/T_A\ar@<1ex>[r] & \dgLieAlgd_A\colon U\ar@<1ex>[l]
}$$
between the category of dg-$A$-modules over $T_A$ and the category of dg-Lie algebroids over $A$. Our first result asserts that the usual projective model structure on dg-$A$-modules can be transferred to a semi-model structure along this adjunction.
\begin{theorem}\label{la:thm:modelstructureondglralgebras}
The category $\dgLieAlgd_A$ of dg-Lie algebroids over $A$ and the category $\dgLooAlgd_A$ of $L_\infty$-algebroids over $A$ both admit a right proper, tractable semi-model structure, in which a map is a weak equivalence (resp.\ a fibration) if and only if it is a quasi-isomorphism (a degreewise surjection).
\end{theorem}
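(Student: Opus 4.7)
The plan is to apply the transfer lemma (Lemma \ref{app:lem:transferlemma}) to the free-forgetful adjunctions between $\dgMod_A/T_A$ (with its projective semi-model structure transferred from $\dgMod_A$) and $\dgLieAlgd_A$, respectively $\dgLooAlgd_A$. Both algebroid categories are locally presentable, as they are finite-limit sketched over $\dgMod_A/T_A$, and the forgetful functor $U$ preserves filtered colimits since each axiom and each bracket involves only finitely many inputs; the small object argument therefore produces the required functorial factorizations. The generating cofibrations and trivial cofibrations in $\dgMod_A/T_A$ may be taken to be $S^{n-1}\otimes A\hookrightarrow D^n\otimes A$ and $0\to D^n\otimes A$, each equipped with the zero map to $T_A$, all of which have cofibrant domains in the slice.

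The crux is to verify the pushout hypothesis of Lemma \ref{app:lem:transferlemma}: for a cofibrant $\mf{g}$ and a transfinite composition of pushouts of maps in $F(J)$ starting at $\mf{g}$, the resulting map is a quasi-isomorphism. By a standard reduction it suffices to treat a single pushout
\[
\xymatrix{F(M)\ar[r]\ar[d] & \mf{g}\ar[d]\\ F(N)\ar[r] & \mf{g}'}
\]
where $j\colon M\to N$ is a trivial cofibration in $\dgMod_A/T_A$ between cofibrant objects. Using the factorization of the free functor through $\cat{L_\infty Alg}^{\dg}_k/T_A$ (the action-algebroid functor $\mf{h}\mapsto A\otimes \mf{h}$, left adjoint to the forgetful functor to $k$-linear $L_\infty$-algebras over $T_A$, as proved just above in the excerpt), one can describe $\mf{g}'$ explicitly and reduce the question to an analysis at the $L_\infty$-algebra level.

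The strategy is the usual one for operadic algebras: filter $\mf{g}'$ by "word length" in the generators coming from a chosen complement of $M$ in $N$, so that the inclusion $\mf{g}\hookrightarrow \mf{g}'$ becomes a transfinite composition of extensions adding one bracket-layer at a time. The associated graded of this filtration should be identifiable as a symmetric-power construction in the acyclic complex $N/M$, tensored with a PBW-type resolution built out of $\mf{g}$. Cofibrancy of $\mf{g}$ enters to guarantee the relevant flatness of this PBW-module, so that tensoring with the acyclic complex $N/M$ preserves acyclicity; the characteristic-zero hypothesis enters through the compatibility of symmetric powers with quasi-isomorphisms of cofibrant $A$-modules. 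Once the transfer is in place, right properness reduces to the analogous property of $\dgMod_A/T_A$, since $U$ preserves pullbacks and both detects and reflects weak equivalences and fibrations.

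The main technical obstacle I anticipate is cleanly computing the associated graded of the word-length filtration: the Leibniz rule $[X,aY]=(-1)^{Xa}a[X,Y]+\rho(X)(a)Y$ intertwines the $A$-linear module structure with the $k$-linear bracket via the anchor, so careful bookkeeping is needed to disentangle the "new generators" from $\mf{g}$ before passing to associated graded, and to identify the result as symmetric powers of $N/M$ tensored against a flat object. I would handle the $L_\infty$-case and the dg-Lie case in parallel, since the symmetric nature of the brackets in both settings is precisely what makes the characteristic-zero PBW-style argument go through.
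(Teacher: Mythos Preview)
Your overall strategy matches the paper's: apply the transfer lemma and analyze pushouts via a word-length filtration whose associated graded is controlled by enveloping-type data. However, there is a genuine error in your setup: the generating (trivial) cofibrations in $\dgMod_A/T_A$ \emph{cannot} all be taken with the zero map to $T_A$. A map $f\colon X\to Y$ over $T_A$ has the right lifting property against $0\to(A[n,n+1]\xrightarrow{0}T_A)$ only for test maps $A[n,n+1]\to Y$ that compose to zero in $T_A$; if, say, $Y\to T_A$ is injective this tests nothing, so surjectivity of $f$ is not detected. You must allow arbitrary structure maps $A[n,n+1]\to T_A$, and this is exactly where the difficulty lies: the new generators you attach act nontrivially on $A$ via the anchor, so the Leibniz rule genuinely mixes them with the $A$-module structure. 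Your proposal is internally inconsistent here---you assert zero anchors but then (correctly) flag the Leibniz rule as the main obstacle; with zero anchors there would be no obstacle, and your ``reduce to the $L_\infty$-algebra level'' step would go through trivially.

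The paper resolves this as follows. One works with \emph{weakly filtered} $L_\infty$-algebroids, assigning weight $0$ to $\mf{g}$, $W$, and $T_A$, and weight $\leq 1$ to $V$. The pushout $\mf{h}$ then carries a weak filtration whose associated graded has new generators $V/W$ mapping to $T_A$ by the \emph{zero} map, since $T_A$ sits entirely in weight $0$. This is precisely the device that kills the anchor and makes your reduction idea work---but only on the associated graded. There one identifies $\mm{gr}(\mf{h})$ with $\mf{g}\amalg F(V/W\xrightarrow{0}T_A)$ and computes it via a reduced enveloping operad $\rEnv_{\mf{g}}$; the symmetric sequence $\Env_{\mf{g}}$ is the precise form of your ``PBW-type resolution''. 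A further subtlety your outline does not address is that the weak filtration is a priori only a sequence of maps, not injections; the paper proves injectivity (Proposition~\ref{la:prop:reductionto0map}) under the hypothesis that $\mf{g}$ is, without differentials, a retract of a free $L_\infty$-algebroid, which holds for cofibrant $\mf{g}$. Your right-properness argument is fine.
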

It is \emph{not true} that the quasi-isomorphisms and surjections define a \emph{genuine} model structure on dg-Lie algebroids, as has been asserted in \cite{vez13}, even when the cdga $A$ is free. More precisely, the argument in loc.\ cit.\ relies on Quillen's path object argument, but does not explicitly check the condition that every dg-Lie algebroid admits a fibrant replacement. This condition tends to be easily satisfied, but the following example demonstrates shows that dg-Lie algebroids may \emph{fail} to have a fibrant replacement:
\begin{example}\label{ex:counter}
Let $A=k[x, y]$ and consider the quotient $B=k[x, y]/(x-y)$. Let $\mf{g}$ be the free $A$-linear Lie algebra generated by the $A$-module $B^{\oplus 2}=B\big<e_1, e_2\big>$. Equivalently, $\mf{g}$ is the free $B$-linear Lie algebra on two generators $e_1, e_2$, considered as a Lie algebra over $A$. Suppose that the zero map $\mf{g}\rt T_A$ factors over a fibrant dg-Lie algebroid
$$\xymatrix{
\mf{g}\ar[r]^\iota & \mf{h}\ar@{->>}[r]^-\rho & T_A.
}$$
Being fibrant means that $\mf{h}$ is \emph{transitive}, i.e.\ its anchor map is surjective. We claim that $\iota$ can never be a quasi-isomorphism. To see this, let $v\in  \mf{h}$ be an element in $\mf{h}$ and consider the following two equalities in $\mf{h}$:
\begin{align*}
\big[x\cdot \iota(e_1), [y\cdot \iota(e_2), v]\big] & = xy\cdot \big[\iota(e_1), [\iota(e_2), v]\big] - x\cdot\rho(v)(y)\cdot \iota\big([e_1, e_2]\big)\\
\big[y\cdot \iota(e_1), [x\cdot \iota(e_2), v]\big] & = xy\cdot \big[\iota(e_1), [\iota(e_2), v]\big] - y\cdot\rho(v)(x)\cdot \iota\big([e_1, e_2]\big).
\end{align*}
The left hand sides agree by definition of $\mf{g}$, since $x=y$ in $B$. If we let $v$ be an element such that $\rho(v)=\dau/\dau y$, then it follows that
$$
\iota\Big(x\cdot [e_1, e_2]\Big)=0.
$$
This means that the kernel of $\pi_0(\iota)\colon \pi_0(\mf{g})=\mf{g}\rt \pi_0(\mf{h})$ always contains the (non-zero) element $x\cdot [e_1, e_2]$.
\end{example}
Instead of using the path object argument, our proof of Theorem \ref{la:thm:modelstructureondglralgebras} depends on an analysis of pushouts of generating trivial cofibrations. Such pushouts of dg-Lie algebroids (and $L_\infty$-algebroids) have a similar structure as pushouts of maps between free algebras over an operad. However, some extra care is needed because in the case of Lie algebroids, one can add generators that act non-trivially on $A$. For this reason, we postpone the proof of Theorem \ref{la:thm:modelstructureondglralgebras} to Section \ref{la:sec:technicalities}, where we also prove the following result:
\begin{theorem}\label{la:thm:monadicity}
The forgetful functors 
$$
U\colon \dgLieAlgd_A\rt \dgMod_A/T_A \qquad \text{and} \qquad U\colon \dgLooAlgd_A\rt \dgMod_A/T_A
$$
are right Quillen functors with the following two properties:
\begin{itemize}
\item[(a)] they preserve cofibrant objects, i.e.\ any cofibrant dg-Lie-algebroid is cofibrant as a dg-$A$-module.
\item[(b)] they preserve sifted homotopy colimits. More precisely, let $\cat{J}$ be a non-empty category such that the diagonal $\Del: \cat{J}\rt \cat{J}\times \cat{J}$ is homotopy cofinal and let $\mf{g}\colon \cat{J}\rt \dgLieAlgd_A$ be a projectively cofibrant diagram (Definition \ref{def:projectivemodelstructure}). Then the natural map
$$\xymatrix{
\hocolim_{\cat{J}} U(\mf{g})\ar[r] & U(\colim_{\cat{J}} \mf{g})
}$$
is a weak equivalence of dg-$A$-modules over $T_A$. 
\end{itemize}
\end{theorem}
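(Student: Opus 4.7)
My plan is to establish the right Quillen property and then attack (a) and (b) via a common analysis of the underlying module structure on pushouts of generating cofibrations. That $U$ is right Quillen in both the dg-Lie and $L_\infty$ cases is immediate from Lemma \ref{app:lem:transferlemma}, since the semi-model structures of Theorem \ref{la:thm:modelstructureondglralgebras} are transferred along the free-forgetful adjunctions and hence $U$ detects fibrations and trivial fibrations by definition.

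For (a), I would show more strongly that $U$ preserves cofibrations with cofibrant domain; applying this to $\emptyset \to \mf{g}$ gives the statement. Any such cofibration is a retract of a transfinite composition of pushouts $\mf{g}_\alpha \to \mf{g}_{\alpha+1} = \mf{g}_\alpha \coprod_{F(V_\alpha)} F(W_\alpha)$ with $V_\alpha \to W_\alpha$ a generating cofibration in $\dgMod_A/T_A$. Inducting on $\alpha$, I need to show that $U(\mf{g}_\alpha) \to U(\mf{g}_{\alpha+1})$ is a cofibration in $\dgMod_A/T_A$ whenever $U(\mf{g}_\alpha)$ is already cofibrant. The plan is to equip $U(\mf{g}_{\alpha+1})$ with a natural filtration by word length in the new generators $W_\alpha/V_\alpha$, whose subquotients take the form
\[
\mc{U}(\mf{g}_\alpha) \otimes_A \mathrm{Sym}_A^n(W_\alpha/V_\alpha)
\]
(with the obvious $L_\infty$ analog in the second case). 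Here $\mc{U}(\mf{g}_\alpha)$ is the Rinehart universal enveloping algebra of $\mf{g}_\alpha$, which carries its own PBW filtration by $\mathrm{Sym}_A(\mf{g}_\alpha)$ and is cofibrant as a dg-$A$-module because $\mf{g}_\alpha$ is. Since $W_\alpha/V_\alpha$ is cofibrant over $A$, each layer is a cofibration in $\dgMod_A/T_A$.

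For (b), I would proceed in two parallel steps. First, extend the analysis of (a) to diagrams: a projectively cofibrant $\mf{g}\colon \cat{J} \to \dgLieAlgd_A$ is built cellularly from generating projective cofibrations (of the form $F(V) \otimes \cat{J}(j,-) \to F(W) \otimes \cat{J}(j,-)$), and the pointwise word-length filtration shows that $U \circ \mf{g}$ is projectively cofibrant in $\Fun(\cat{J}, \dgMod_A/T_A)$. Second, observe that the 1-categorical functor $U$ preserves sifted colimits: the Lie algebroid monad on $\dgMod_A/T_A$ is built out of finite tensor products, $\Sigma_n$-coinvariants and the $A$-action, each of which commutes with sifted colimits in $\dgMod_A/T_A$; the same holds for the $L_\infty$-algebroid monad, so by standard monadic arguments $U$ creates sifted colimits. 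Combining these, for sifted $\cat{J}$ and projectively cofibrant $\mf{g}$ one has
\[
\hocolim_{\cat{J}} U(\mf{g}) \simeq \colim_{\cat{J}} U(\mf{g}) \cong U\bigl(\colim_{\cat{J}} \mf{g}\bigr).
\]

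The principal obstacle is constructing the word-length filtration in (a) rigorously. For algebras over an ordinary $k$-linear operad this filtration is standard, but for Lie algebroids the Leibniz rule couples the $A$-module structure to the bracket, so new generators from $W_\alpha$ can act nontrivially on $A$ via the anchor and the $A$-module structure on $U(\mf{g}_{\alpha+1})$ genuinely differs from that on $U(\mf{g}_\alpha)$. The Rinehart universal enveloping algebra is precisely the algebraic device that absorbs this coupling, and its dg PBW theorem is what identifies the subquotients in the required form. For the $L_\infty$-algebroid case one needs an analogous enveloping dg-associative algebra, presumably via a cobar-type resolution, after which the same strategy applies verbatim.
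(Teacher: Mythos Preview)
Your overall architecture is sound---filter the pushout by word length in the new generators, identify the layers, and induct---but the identification of the layers is wrong, and this is not a cosmetic error: it is exactly where the subtlety of the argument lives.

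The subquotients of the word-length filtration on $\mf{h}=\mf{g}_\alpha\coprod_{F(V_\alpha)}F(W_\alpha)$ are \emph{not} of the form $\mc{U}(\mf{g}_\alpha)\otimes_A\mathrm{Sym}^n_A(W_\alpha/V_\alpha)$. Already for $\mf{g}_\alpha=0$ and a single free generator $x$ (zero anchor) your formula predicts an infinite sum of symmetric powers, whereas the free Lie algebroid on one generator has underlying module $A\cdot x$. The correct weight-$n$ piece is
\[
\Env_{\mf{g}_\alpha}(n)\;\otimes_{\Sigma_n\ltimes A^{\otimes n}}\;(W_\alpha/V_\alpha)^{\otimes n},
\]
involving the arity-$n$ part of the reduced enveloping \emph{operad} (Definition \ref{la:def:reducedenvelopingoperad}), not just its arity-$1$ part $\mc{U}(\mf{g}_\alpha)=\rEnv_{\mf{g}_\alpha}(1)$. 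See Proposition \ref{la:prop:reductionto0map} and the surrounding discussion for the precise statement; one also has to check that the weakly filtered object is genuinely filtered, which uses condition~($\star$) there.

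This changes the inductive scheme. To know that the weight-$n$ layer is a cofibrant $A$-module you need $\Env_{\mf{g}_\alpha}(n)$ to be cofibrant as an $A$-$A^{\otimes n}$-bimodule, which does not follow from cofibrancy of $\mf{g}_\alpha$ alone (nor from PBW for $\mc{U}(\mf{g}_\alpha)$). The paper therefore strengthens the inductive hypothesis and proves that the \emph{entire} symmetric sequence $\Env_{\mf{g}}(p)$ is cofibrant (and good as a $\cat{J}$-diagram) for all $p$ simultaneously; this is Theorem \ref{la:thm:monadicityrephrased}, and Lemma \ref{lem:attachingcelltodiagram} is the inductive step. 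The case $p=0$ recovers $U(\mf{g})$. Your approach to (b) via ``$U$ preserves $1$-categorical sifted colimits'' is fine once (a) is in place at this stronger level; the paper packages both (a) and (b) together using the notion of a \emph{good} diagram (Definition \ref{la:def:gooddiagram}) rather than asserting projective cofibrancy of $U\circ\mf{g}$, which is a bit more than the filtration delivers.
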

\begin{example}\label{ex:totalisation}
Suppose that $\mf{g}_\sbullet\colon \Del^{\op}\rt \dgLieAlgd_A$ is a simplicial diagram of dg-Lie algebroids. Theorem \ref{la:thm:monadicity} implies that the homotopy colimit of $\mf{g}_\sbullet$ can be computed as follows: taking normalised chains, we obtain a bicomplex $N(\mf{g})_{p, q}\subseteq (\mf{g}_p)_q$ together with a map $N(\mf{g})\rt T_A$, where $T_A$ is concentrated in bidegrees $(0, q)$. Taking total complexes, we obtain a map of chain complexes $\mm{Tot}(\mf{g}_\bullet)\rt T_A$. Using the Eilenberg-Zilber map, this total complex inherits the structure of a dg-Lie algebroid. This models the homotopy colimit of $\mf{g}_\sbullet$ because the total complex $\mm{Tot}(\mf{g}_\bullet)$ computes its homotopy colimit in the category of chain complexes (over $T_A$).
\end{example}

Theorem \ref{la:thm:monadicity} implies that the free-forgetful adjunction from dg-Lie algebroids to chain complexes (over $T_A$) induces a monadic adjunction of $\infty$-categories. Informally, this means that the forgetful functor still behaves like a forgetful functor when considered from a homotopical perspective. As an application of this result, consider the inclusion
$$\xymatrix{
i\colon \dgLie_A\ar[r] & \dgLieAlgd_A; \hspace{4pt} \mf{g}\ar@{|->}[r] & \big(\mf{g}\rto{0} T_A\big).
}$$
of the category of dg-Lie algebras over $A$ into the category of dg-Lie algebroids.
\begin{proposition}\label{la:prop:monadicityoverliealgebras}
Endow the category $\dgLie_A$ of dg-Lie algebras over $A$ with the model structure transferred from $\dgMod_A$. Then the above inclusion functor is part of a Quillen adjunction
$$\xymatrix{
i\colon \dgLie_A\ar@<1ex>[r] & \dgLieAlgd_A \ar@<1ex>[l]\colon \ker
}$$
whose right adjoint sends a dg-Lie-algebroid to the kernel of its anchor map. The right derived functor $\mathbb{R}\ker$ detects equivalences and preserves all sifted homotopy colimits.
\end{proposition}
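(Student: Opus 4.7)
The plan is first to set up the adjunction and verify it is Quillen, then to identify $\field{R}\ker$ with a homotopy fiber, and finally to deduce both assertions by composition with functors whose homotopical behaviour is already known. For $\mf{g}\in \dgLie_A$, the pair $i(\mf{g})=(\mf{g}\rto{0} T_A)$ is a dg-Lie algebroid because the Leibniz rule of Definition \ref{la:def:lralgebra} degenerates, for vanishing anchor, to $A$-bilinearity of the bracket---which holds by assumption on $\mf{g}$. Conversely, for any Lie algebroid $\mf{h}$ the kernel $\ker\rho$ is an $A$-submodule and $k$-Lie subalgebra on which the anchor vanishes; by the same computation its bracket is $A$-bilinear, so $\ker\rho\in \dgLie_A$. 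Any map of Lie algebroids $i(\mf{g})\rt \mf{h}$ must intertwine anchors and hence factors through $\ker\rho$, so $\ker$ is right adjoint to $i$. For the Quillen property, let $\pi\colon \mf{h}\rt \mf{h}'$ be a degreewise surjection of Lie algebroids. Any lift of $x\in \ker\rho_{\mf{h}'}$ to $\tilde x\in \mf{h}$ satisfies $\rho_\mf{h}(\tilde x)=\rho_{\mf{h}'}(x)=0$, so $\tilde x\in \ker\rho_\mf{h}$; hence the induced map $\ker\rho_\mf{h}\rt \ker\rho_{\mf{h}'}$ is again a degreewise surjection, with strict kernel equal to $\ker\pi$. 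If $\pi$ is moreover a quasi-isomorphism, then $\ker\pi$ is acyclic and the long exact sequence forces $\ker$ to preserve trivial fibrations as well.

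I would next identify the right derived functor as a homotopy fiber. Pick a functorial fibrant replacement $\mf{h}\rt \mf{h}^\mm{fib}$; the surjectivity of $\rho_{\mf{h}^\mm{fib}}$ produces a short exact sequence $0\rt \ker\rho_{\mf{h}^\mm{fib}}\rt \mf{h}^\mm{fib}\rt T_A\rt 0$ of dg-$A$-modules, so the underlying complex of $\ker\rho_{\mf{h}^\mm{fib}}$ is naturally quasi-isomorphic to the homotopy fiber of $\rho_\mf{h}$. Writing $V\colon \dgLie_A\rt \dgMod_A$ for the (right Quillen) forgetful functor and $U\colon \dgLieAlgd_A\rt \dgMod_A/T_A$ for the forgetful functor of Theorem \ref{la:thm:monadicity}, this yields a natural weak equivalence of functors $V\circ \field{R}\ker\simeq \mm{hofib}\circ U$, where $\mm{hofib}$ sends a map to its homotopy fiber over $T_A$.

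Both claims of the proposition then follow by composition. Theorem \ref{la:thm:monadicity} provides detection of weak equivalences and preservation of sifted homotopy colimits for $U$ (the weak equivalences in source and target are both the quasi-isomorphisms). The forgetful functor $V$ detects weak equivalences by the definition of the transferred model structure, and preserves sifted homotopy colimits by the standard result for algebras over an operad in characteristic zero. The homotopy fiber functor $\mm{hofib}\colon \dgMod_A/T_A\rt \dgMod_A$ detects weak equivalences via the 5-lemma applied to the long exact sequences in homology (with the common base $T_A$ fixed), and preserves sifted homotopy colimits because $\dgMod_A$ is stable: every fiber sequence is a shifted cofiber sequence, so $\mm{hofib}$ is a finite colimit and commutes with sifted colimits. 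Combining these, $V\circ \field{R}\ker\simeq \mm{hofib}\circ U$ detects weak equivalences and preserves sifted homotopy colimits; since $V$ itself has both properties, the same follows for $\field{R}\ker$. The main subtle point I expect is the identification $V\circ \field{R}\ker\simeq \mm{hofib}\circ U$, since only after fibrant replacement does the strict algebraic kernel compute the derived kernel, i.e.\ the homotopy fiber of the anchor.
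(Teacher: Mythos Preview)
Your proof is correct and follows the same strategy as the paper: both arguments factor through the commuting square of right Quillen functors relating $\ker$ on Lie algebroids and on $\dgMod_A/T_A$ via the forgetful functors $U$ and $V$, then invoke Theorem~\ref{la:thm:monadicity} for $U$, the operadic analogue for $V$, and stability of $\dgMod_A$ for the module-level kernel. Your justification that $\mm{hofib}$ preserves sifted homotopy colimits is slightly imprecise (the cofiber $T_A/X$ involves the constant object $T_A$, so one really uses that sifted categories are weakly contractible), but this is precisely what the paper's phrase ``homotopy colimits indexed by contractible categories'' is getting at.
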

\begin{proof}
One easily verifies that the functor $\ker$ is right Quillen and fits into a commuting diagram of right Quillen functors
\begin{equation}\label{la:diag:liealgebrasandalgebroids}\vcenter{\xymatrix{
\dgLieAlgd_A\ar[r]^-{\ker}\ar[d]_U & \dgLie_A\ar[d]^U\\
\dgMod_A/T_A\ar[r]_-{\ker} & \dgMod_A.
}}\end{equation}
Since the vertical forgetful functors detect equivalences and preserve sifted homotopy colimits (see \cite[Proposition 7.8]{pav14} for the case of Lie algebras), it suffices to check that the right derived functor of $\ker\colon \dgMod_A/T_A\rt \dgMod_A$ has these properties as well. But it follows immediately from the fact that $\dgMod_A$ is a stable model category that taking homotopy pullbacks along $0\rt T_A$ detects equivalences and preserves all homotopy colimits indexed by contractible categories.
\end{proof}
\begin{remark}
The above proposition asserts that the $\infty$-category of Lie algebroids over $A$ is monadic over the $\infty$-category of Lie algebras over $A$. In particular, even though the functor $\dgLie_A\rt \dgLieAlgd_A$ is fully faithful, its derived functor is \emph{not} fully faithful; the derived counit map is given at the level of $A$-modules by a map $\mf{g}\oplus T_A[-1]\rt \mf{g}$. 

In \cite{gai16}, Lie algebroids have also been described as algebras for a certain monad on the $\infty$-category $\cat{Lie}_A$ of $A$-linear Lie algebras. However, the monad used in loc.\ cit.\ is constructed in a rather indirect way and is not described explicitly in algebraic terms.
\end{remark}
\begin{corollary}\label{la:cor:resolutionbyliealgebras}
Any dg-Lie algebroid $\mf{g}$ arises as the homotopy colimit of a diagram $\mf{h}_\sbullet\colon \Del^{\op}\rt \dgLieAlgd_A$ where each $\mf{h}_n$ is weakly equivalent to a Lie algebra over $A$.
\end{corollary}
\begin{proof}
Use the bar resolution associated to the Quillen pair $\dgLie_A\leftrightarrows \dgLieAlgd_A$ \cite{blu14} or associated to the induced monadic adjunction of $\infty$-categories \cite[Proposition 4.7.4.14]{lur16}.
\end{proof}
\begin{remark}
Let $0\stackrel{\sim}{\longrightarrow} P(T_A)\twoheadrightarrow T_A$ be a factorisation of the zero map into a trivial cofibration, followed by a fibration. For any Lie algebroid $\mf{g}$, a Lie algebroid map $\mf{g}\rt P(T_A)$ determines a null-homotopy (by Lie algebroid maps) of the anchor map $\mf{g}\rt T_A$. Because the semi-model structure on $\dgLieAlgd_A$ is right proper, pulling back along $0\rt P(T_A)$ determines a right Quillen equivalence
$$\xymatrix{
\dgLieAlgd_A/P(T_A)\ar[r]^-\sim & \dgLieAlgd_A/0\simeq \dgLie_A.
}$$
In other words, the model category of $A$-linear dg-Lie algebras is Quillen equivalent to the semi-model category of dg-Lie algebroids endowed with a null-homotopy (by Lie algebroid maps) $\mf{g}\rt P(T_A)$ of their anchor map.
\end{remark}

\begin{corollary}\label{la:cor:equivalencetohomotopyLRalgebras}
The inclusion $j\colon \dgLieAlgd_A\rt \dgLooAlgd_A$ is the right adjoint of a Quillen equivalence.
\end{corollary}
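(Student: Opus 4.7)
The inclusion $j$ is fully faithful (a map of $L_\infty$-algebroids between two objects whose higher brackets vanish is automatically a map of dg-Lie algebroids), and it preserves fibrations and weak equivalences, both of which are defined at the level of the underlying $A$-module over $T_A$, on which $j$ acts as the identity. Hence $j$ is right Quillen, its left adjoint $L$ exists by the adjoint functor theorem, and full faithfulness implies that the counit $L j \to \mathrm{id}$ is already a natural isomorphism. Establishing the Quillen equivalence thus reduces to showing that the derived unit $\mf{g} \to j\mathbb{L} L(\mf{g})$ is a weak equivalence for every cofibrant $\mf{g} \in \dgLooAlgd_A$.

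By Theorem \ref{la:thm:monadicity}, both forgetful functors $U\colon \dgLieAlgd_A \to \dgMod_A/T_A$ and $U\colon \dgLooAlgd_A \to \dgMod_A/T_A$ detect weak equivalences and preserve sifted homotopy colimits. Since $U \circ j = U$, a short diagram chase in the $\infty$-categorical localization shows that $j$ itself preserves sifted homotopy colimits, and hence so does $jL$ (as $L$ is a left adjoint). Applying Barr--Beck--Lurie to the monadic adjunction $F \dashv U$ on $\dgLooAlgd_A$, every $L_\infty$-algebroid is a sifted homotopy colimit of free $L_\infty$-algebroids on cofibrant modules, so it suffices to verify that the unit is a weak equivalence on each $\mf{g} = F_{L_\infty}(V)$ with $V \in \dgMod_A/T_A$ cofibrant.

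For such $V$, uniqueness of left adjoints gives $L F_{L_\infty}(V) \cong F_{\mathrm{Lie}}(V)$, so the derived unit becomes the comparison $F_{L_\infty}(V) \to j F_{\mathrm{Lie}}(V)$, and we must see that its underlying map of $A$-modules is a quasi-isomorphism. A second sifted-hocolim argument---using that $F_{L_\infty}$, $F_{\mathrm{Lie}}$ and $U$ all preserve sifted hocolims---lets us replace $V$ with the generators of $\dgMod_A/T_A$ under sifted homotopy colimits, namely modules of the form $A \otimes_k W$ equipped with a $k$-linear anchor $W \to T_A$. An adjunction computation involving the action algebroid construction of Example \ref{la:ex:actionalgebroids} then identifies $F_{L_\infty}(A \otimes_k W) \cong A \otimes_k F_{L_\infty}^k(W)$ and $F_{\mathrm{Lie}}(A \otimes_k W) \cong A \otimes_k F_{\mathrm{Lie}}^k(W)$, and the map in question is $\mathrm{id}_A \otimes_k \bigl(F_{L_\infty}^k(W) \to F_{\mathrm{Lie}}^k(W)\bigr)$. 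This is a quasi-isomorphism in characteristic zero, by the classical fact that the operad map $L_\infty \to \mathrm{Lie}$ is a quasi-isomorphism and that $-\otimes_k A$ preserves quasi-isomorphisms over a field.

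The main obstacle is the last paragraph: identifying the free $L_\infty$- and Lie algebroids on $A \otimes_k W$ with the corresponding action algebroids, together with the sifted-hocolim presentation of cofibrant objects of $\dgMod_A/T_A$ that is compatible with this identification. Everything preceding that computation is formal, flowing directly from the monadic framework established by Theorem \ref{la:thm:monadicity}.
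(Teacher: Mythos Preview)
Your argument is correct and follows the same overall pattern as the paper's proof: establish that $j$ (and hence $jL$) preserves sifted homotopy colimits, reduce the derived unit to a simple class of objects, and then invoke the quasi-isomorphism of operads $L_\infty\rt\mathrm{Lie}$. The difference lies in \emph{which} monadic adjunction is used for the reduction. The paper works with the right Quillen functor $\ker\colon \dgLooAlgd_A\rt \cat{L_\infty Alg}^{\dg}_A$ (and its Lie analogue), so that Corollary~\ref{la:cor:resolutionbyliealgebras} reduces directly to the case where $\mf{g}$ is an $A$-linear $L_\infty$-algebra; there the unit of $(L,j)$ is literally the unit of the known Quillen equivalence $(w_!,w^*)$ between $A$-linear $L_\infty$- and Lie algebras, and the proof ends in one line. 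Your route through the forgetful functor $U$ to $\dgMod_A/T_A$ lands instead on free algebroids $F_{L_\infty}(V)$, which forces the extra step you flag as the ``main obstacle'': a second sifted-colimit reduction from arbitrary cofibrant $V$ to free $A$-modules $A\otimes_k W$, followed by the action-algebroid identification $F_{L_\infty}(A\otimes_k W)\cong A\otimes_k L_\infty(W)$. That step is fine (it is exactly the composite-of-left-adjoints argument, using that $\dgMod_A/T_A$ is monadic over $\dgMod_k/T_A$), but it is bypassed entirely by the paper's choice of reducing along $\ker$ rather than $U$.
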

In particular, any $L_\infty$-algebroid $\mf{g}$ is quasi-isomorphic to a dg-Lie algebroid. However, note that this dg-Lie algebroid is typically much \emph{larger} than $\mf{g}$ itself: it is obtained by first taking a cofibrant replacement of $\mf{g}$ and then applying the left adjoint to $j$.
\begin{proof}
The functor $j$ fits into a commuting diagram of right Quillen functors
$$\xymatrix{
\dgLieAlgd_A\ar[r]^j \ar[d]_{\ker} & \dgLooAlgd_A\ar[d]^{\ker}\\
\dgLie_A \ar[r]_-{w^*} & \cat{L_\infty Alg}^{\dg}_A.
}$$
Here $w^*$ is the forgetful functor associated to the map of operads $w\colon L_\infty\rt \ope{Lie}$. This functor is part of a Quillen equivalence
$$\xymatrix{
w_!\colon \dgLie_A\ar@<1ex>[r] & \cat{L_\infty Alg}^{\dg}_A\colon w^*\ar@<1ex>[l].
}$$
because $w$ is a weak equivalence between $\Sigma$-cofibrant operads. Since $w^*$ and the vertical functors have right derived functors that detect equivalences and preserve all sifted homotopy colimits, it follows that $j$ has these properties as well. 

Let $L$ be the left adjoint to the right Quillen functor $j$. Because $j$ detects weak equivalences, it suffices to show that the (derived) unit map $\eta\colon \mf{g}\rt jL(\mf{g})$ is a weak equivalence for each cofibrant $L_\infty$-algebroid $\mf{g}$. By Corollary \ref{la:cor:resolutionbyliealgebras} (in the $L_\infty$-case), $\mf{g}$ is weakly equivalent to the homotopy colimit of a simplicial diagram of $L_\infty$-algebroids, each of which is weakly equivalent to an $L_\infty$-algebra. Because $j$ and $L$ preserve sifted homotopy colimits, it suffices to show that $\eta$ is a weak equivalence when $\mf{g}$ is a cofibrant $L_\infty$-algebra over $A$. But in that case, the (derived) unit map $\mf{g}\rt jL(\mf{g})$ agrees with the (derived) unit map $\mf{g}\rt w^*w_!\mf{g}$, which is a weak equivalence.
\end{proof}
Finally, let us mention that the proofs of Theorem \ref{la:thm:modelstructureondglralgebras} and Theorem \ref{la:thm:monadicity} also apply in various other situations where the notion of a dg-Lie algebroid makes sense:
\begin{variant}\label{var:gradedmixed}
Let $\cat{M}_k$ be the category of \emph{graded-mixed complexes} over $k$, i.e.\ $\mathbb{Z}$-graded chain complexes $\{V(p)\}_{p\in \mathbb{Z}}$ equipped with maps
$$\xymatrix{
d\colon V(p)\ar[r] & V(p+1)[-1]
}$$
such that $d^2=0$. Recall from \cite{pan13} that there is a cofibrantly generated model structure on $\mc{M}_k$, with weak equivalences (fibrations) given by the degreewise quasi-isomorphisms (surjections). This is a symmetric monoidal model structure for the tensor product 
$$
(V\otimes W)(p)=\textstyle{\bigoplus}_q V(q)\otimes_k W(p-q) \qquad \qquad d_{V\otimes W} = d_V\otimes 1 + 1\otimes d_W.
$$
Interpreting Definition \ref{la:def:lralgebra} in $\mc{M}_k$ instead of chain complexes, we obtain a notion of \emph{graded-mixed dg-Lie algebroid} $\mf{g}\rt T_A$ over a cdga $A$. Here $A$ and $T_A$ are considered as graded mixed complexes of weight zero. The proof of Theorem \ref{la:thm:modelstructureondglralgebras} shows that the category of such graded-mixed dg-Lie algebroids carries a transferred semi-model structure.
\end{variant}
\begin{variant}\label{var:tame}
The category $\dgMod_A$ can be endowed with the \emph{contraderived}, or \emph{tame} model structure, of which the projective model structure is a right Bousfield localisation. In this model structure, the fibrations are the surjections and the cofibrations are the monomorphisms whose cokernels are projective as graded $A$-modules. 

The category of dg-Lie algebroids over $A$ can then be endowed with a semi-model structure in which the fibrations are the surjections and the weak equivalences are the maps that induce a tame weak equivalence on the underlying dg-$A$-modules. Furthermore, the forgetful functor 
$$
\dgLieAlgd_A\rt \dgMod_A/T_A
$$
preserves cofibrant objects and sifted homotopy colimits. Both assertions are proven in exactly the same way as the above two theorems, using Proposition \ref{la:prop:reductionto0map} (see also Remark \ref{rem:fortame}).
\end{variant}
\begin{variant}\label{var:presheaves}
Let $\cat{C}$ be a site endowed with a presheaf of commutative dg-algebras
$$\xymatrix{
\mc{O}\colon \cat{C}^\op\ar[r] & \dgCAlg_k.
}$$
There is a natural presheaf $T_\mc{O}$ whose value on $c\in\cat{C}$ is the dg-Lie algebra (and dg-$\mc{O}(c)$-module) of natural derivations of $\mc{O}$ over $\cat{C}/c$. A \emph{presheaf of dg-Lie algebroids} over $\mc{O}$ is a map $\mf{g}\rt T_\mc{O}$ of presheaves of dg-$\mc{O}$-modules and dg-Lie algebras, satisfying the conditions of Definition \ref{la:def:lralgebra}. 

There is a semi-model structure on the category of presheaves of dg-Lie algebroids over $\mc{O}$ whose weak equivalences are the maps inducing isomorphisms on homology sheaves. Indeed, one can apply the proof of Theorem \ref{la:thm:modelstructureondglralgebras} to obtain a transferred semi-model structure on presheaves of dg-Lie algebroids over $\mc{O}$, starting with the following model structure on presheaves of chain complexes:
\begin{lemma}
Let $\cat{C}$ be a site, $k$ a field and $\mm{PSh}(\cat{C})^{\dg}$ the category of (unbounded) complexes of presheaves of $k$-vector spaces over $\cat{C}$. This carries a cofibrantly generated model structure, in which the cofibrations are the monomorphisms and the weak equivalences are local quasi-isomorphisms, i.e.\ maps inducing isomorphisms on homology sheaves. Furthermore, this model structure is monoidal model with respect to the pointwise tensor product $(F\otimes G)(c) = F(c)\otimes_k G(c)$.
\end{lemma}
\begin{proof}
Let us first consider the category $\mm{Sh}^\dg(\cat{C})$ of (unbounded) complexes of \emph{sheaves} of $k$-vector spaces. Since the category $\mc{G}$ of sheaves of $k$-vector spaces is a Grothendieck abelian category, the category of complexes in $\mc{G}$ carries a combinatorial model structure, whose cofibrations are the monomorphisms and weak equivalences are the local quasi-isomorphisms (see e.g.\ \cite[Corollary 7.1]{gil07}). 

The category $\mc{G}$ carries a tensor product $\tilde{\otimes}$, obtained by taking the associated sheaf of the pointwise tensor product. Note that $F\tilde{\otimes}(-)$ is exact for every sheaf $F\in\mc{G}$: indeed, the pointwise tensor product of $k$-vector spaces is exact, as is taking associated sheaves. This implies that $\mm{Sh}^\dg(\cat{C})$ forms a monoidal model category \cite[Theorem 5.1]{gil07}.

For the presheaf case, we apply Smith's recognition theorem \cite[Proposition A.2.6.8]{lur09}. Note that:
\begin{enumerate}[leftmargin=*]
\item The monomorphisms in $\mm{PSh}^\dg(\cat{C})$ form a weakly saturated class generated by a set of morphisms \cite[Lemma A.2.8.3]{lur09}. 
\item The injective local quasi-isomorphisms form a weakly saturated class. Indeed, it is the intersection of the weakly saturated class of monomorphisms and the inverse image of the trivial cofibrations in $\mm{Sh}^\dg(\cat{C})$ under the associated sheaf functor $a\colon \mm{PSh}^\dg(\cat{C})\rt \mm{Sh}^\dg(\cat{C})$.
\item Because $a$ is exact, a map $F\rt G$ is a local quasi-isomorphism if and only if $a(F)\rt a(G)$ is. The local quasi-isomorphisms are thus the inverse image under $a$ of the weak equivalences in $\mm{Sh}^\dg(\cat{C})$, and hence form an accessibly embedded accessible subcategory \cite[Corollary A.2.6.5, A.2.6.6]{lur09}.
\item The local quasi-isomorphisms have the two-out-of-three property.
\item If a map has the right lifting property against all monomorphisms, then it is in particular a pointwise trivial fibration of complexes and hence a local quasi-isomorphism.
\end{enumerate} 
It follows that $\mm{PSh}^\dg(\cat{C})$ has the required combinatorial model structure. To see that it is monoidal model, note that the pushout-product $f\Box g$ is a monomorphism when $f$ and $g$ are monomorphisms: this can be verified pointwise, where it follows from the case of chain complexes over a field (where the injective and projective model structure coincide). If $f$ is furthermore a local quasi-isomorphism, then $f\Box g$ is a local quasi-isomorphism: its image $a(f\Box g)$ under the associated sheaf functor agrees with the pushout-product of $a(f)$ and $a(g)$ in $\mm{Sh}^{\dg}(\cat{C})$, which is a local quasi-isomorphism.
\end{proof}
\end{variant}

\section{Filtrations on cell attachments}\label{la:sec:technicalities}
This section is devoted to the proofs of Theorem \ref{la:thm:modelstructureondglralgebras} and Theorem \ref{la:thm:monadicity}. Just as in the case of algebras over an operad, these proofs rely on an analysis of the pushout of a diagram of dg-Lie algebroids (or $L_\infty$-algebroids) of the form
\begin{equation}\label{diag:pushoutofliealgebroids}\vcenter{\xymatrix@R=1.8pc@C=1.9pc{
F(V)\ar[r]^-{F(i)}\ar[d] & F(W)\ar[d]\\
\mf{g}\ar[r] & \mf{h}.
}}\end{equation}
We will show that the map $\mf{g}\rt \mf{h}$ can be decomposed into a sequence of maps $\mf{g}^{(p)}\rt \mf{g}^{(p+1)}$, whose associated graded is controlled by the \emph{reduced enveloping operad} of the dg-Lie algebroid $\mf{g}$. The difference from the case of algebras over operads is that the maps $\mf{g}^{(p)}\rt \mf{g}^{(p+1)}$ need not be injective in general.

Throughout, we will only treat $L_\infty$-algebroids; the case of dg-Lie algebroids proceeds in exactly the same manner, replacing all appearances of the $L_\infty$-operad by the Lie operad.

\paragraph{Filtrations}
Let $\Mod_k^{\dg, \mathbb{N}}$ be the category of sequences of chain complexes 
\begin{equation}\label{la:diag:filteredcomplex}\smash{\xymatrix{
V^{(0)}\ar[r] & V^{(1)}\ar[r] & V^{(2)}\ar[r] &\dots
}}\end{equation}
endowed with the Reedy model structure. We will refer to an object $V$ of $\Mod_k^{\dg, \mathbb{N}}$ as a \emph{weakly filtered} chain complex. An object is Reedy cofibrant if and only if \eqref{la:diag:filteredcomplex} consists of monomorphisms, in which case it can be interpreted as a genuine filtration on $\colim V$. We will say that an element in $V^{(p)}$ is of \emph{weight} $\leq p$ and an element of $V^{(p)}/V^{(p-1)}$ is of \emph{weight} $p$. Degrees always indicate homological degrees.

The category of weakly filtered chain complexes has a closed symmetric monoidal structure, given by 
$$
(V\otimes W)^{(n)} = \colim V^{(p)}\otimes W^{(q)}.
$$
The colimit is taken over the full subcategory of $(p, q)\in \mathbb{N}\times \mathbb{N}$ for which $p+q\leq n$. The symmetry isomorphism given by the symmetry isomorphisms of chain complexes $V^{(p)}\otimes W^{(q)}\rt W^{(q)}\otimes V^{(p)}$, i.e.\ there are no extra signs depending on $p$ and $q$. 

There are two Quillen pairs
$$\xymatrix{
\colim\colon \Mod_k^{\dg, \mathbb{N}} \ar@<1ex>[r] & \dgMod_k\colon i\ar@<1ex>[l] & \mm{gr}\colon \Mod_k^{\dg, \mathbb{N}} \ar@<1ex>[r] & \Mod_k^{\dg, \mm{gr}}\ar@<1ex>[l]\colon j.
}$$
Here $i$ sends a chain complex to the constant diagram on $V$ (and will be omitted from the notation) and `$\mm{gr}$' sends a sequence $V$ to the $\mathbb{N}$-graded chain complex $V^{(\bullet)}/V^{(\bullet-1)}$, with right adjoint sending a graded chain complex $W$ to the sequence consisting of zero maps. Each of the above functors is symmetric monoidal. 
\begin{remark}\label{la:rem:associatedgradeddetectsequivalences}
The functor $\mm{gr}\colon \Mod_k^{\dg, \mathbb{N}}\rt \Mod_k^{\dg, \mm{gr}}$ detects weak equivalences between cofibrant objects: this is just the fact that weak equivalences of filtered chain complexes are detected on the associated graded.
\end{remark}
The notions of $L_\infty$-algebras and $L_\infty$-algebroids over $A$ have obvious weakly filtered and graded analogues ($A$ is always of weight $\leq 0$). For example, a weakly filtered $L_\infty$-algebroid over $A$ is an object $\mf{g}$ in $\Mod_k^{\dg, \mathbb{N}}$ together with  
\begin{enumerate}
 \item the structure of an $A$-module, i.e.\ natural chain maps $A\otimes \mf{g}^{(i)}\rt \mf{g}^{(i)}$.
 \item an $L_\infty$-algebra structure in $\Mod_k^{\dg, \mathbb{N}}$, i.e.\ for each $p\geq 0$ a matching family of $n$-ary maps $[-, \cdots, -]\colon \mf{g}^{(i_1)}\otimes \cdots\otimes \mf{g}^{(i_n)}\rt \mf{g}^{(p)}$, for all $i_1+\cdots+i_n\leq p$.
 \item a map $\mf{g}\rt T_A$ of $L_\infty$-algebras and $A$-modules in $\Mod_k^{\dg, \mathbb{N}}$, where $T_A$ is of weight $\leq 0$.
\end{enumerate}
When $\mf{g}$ is Reedy cofibrant (i.e.\ a filtered chain complex), this is simply the structure of an $L_\infty$-algebroid on $\colim(\mf{g})$ whose entire structure respects the filtration. Let us denote the categories of weakly filtered and graded $L_\infty$-algebroids over $A$ by
$$
\LooAlgd_A^{\dg, \mathbb{N}} \qquad \text{and} \qquad \LooAlgd_A^{\dg, \mm{gr}}.
$$ 
The description of the free $L_\infty$-algebroid on a chain complex over $T_A$ (see Remark \ref{rem:freealgebroids}) also applies to the weakly filtered and graded settings: one first takes the free (weakly filtered, graded) $L_\infty$-algebra over $T_A$ and then takes the associated action $L_\infty$-algebroid (Example \ref{la:ex:actionalgebroids}). This yields a commuting diagram of left adjoints
$$\xymatrix{
\Mod_k^{\dg, \mathbb{N}}/T_A\ar[d]_{\mm{Free}} & \dgMod_k/T_A\ar[d]_{\mm{Free}}\ar[l]_-i & \Mod_k^{\dg, \mathbb{N}}/T_A \ar[l]_{\colim} \ar[d]^{\mm{Free}}\ar[r]^{\mm{gr}} & \Mod_k^{\dg, \mm{gr}}/T_A\ar[d]^{\mm{Free}}\\
\LooAlgd^{\dg, \mathbb{N}} & \dgLooAlgd_A  \ar[l]^-i & \LooAlgd_A^{\dg, \mathbb{N}} \ar[r]_{\mm{gr}}\ar[l]^{\colim} & \LooAlgd^{\dg, \mm{gr}}.
}$$
The vertical functors are the free functors, sending a (weakly filtered, graded) chain complex $V$ over $T_A$ to the action $L_\infty$-algebroid $A\otimes L_\infty(V)$ associated to the free $L_\infty$-algebra on $V$. All horizontal functors can be computed at the level of chain complexes. For example, the colimit of a weakly filtered $L_\infty$-algebroid is simply the colimit of the underlying sequence of chain complexes, together with a certain $L_\infty$-algebroid structure on it. Note that the inclusion functor $i$ is both a right adjoint (to the functor `$\colim$') and a left adjoint (to the functor taking the weight $\leq 0$ part).

\paragraph{Coproducts with $L_\infty$-algebras}
In this section we will study the simplest type of pushout diagram \eqref{diag:pushoutofliealgebroids}: the case of a coproduct of a (weakly filtered) $L_\infty$-algebroid $\mf{g}$ over $A$ with the free $L_\infty$-algebroid generated by a (weakly filtered) dg-$A$-module $V$, equipped with the \emph{zero map} to $T_A$. 

Such coproducts are much easier to describe than coproducts for non-zero maps $V\rt T_A$. Indeed, the coproduct $\mf{g}\amalg \smash{F\big(V\rto{0} T_A\big)}$ fits into a retract diagram
$$\xymatrix{
\mf{g}=\mf{g}\amalg F(0)\ar[r] & \mf{g}\amalg F\big(V\rto{0} T_A\big)\ar[r] & \mf{g}.
}$$
This construction is the left adjoint in an adjunction
\begin{equation}\label{diag:freeretractiveliealgebroid}\vcenter{\xymatrix{
\mf{g}\amalg F\big((-)\rto{0} T_A\big)\colon \Mod^{\dg, \mathbb{N}}_A \ar@<1ex>[r] & \mf{g}/\LooAlgd^{\dg, \mathbb{N}}/\mf{g}\colon \ker\ar@<1ex>[l]
}}\end{equation}
where the right adjoint sends a retract diagram $\mf{g}\rt \mf{h}\rt \mf{g}$ to the kernel of $\mf{h}\rt \mf{g}$.

The category of retract diagrams of (weakly filtered) $L_\infty$-algebroids
$$\xymatrix{
\mf{g}\ar[r] & \mf{h}=\mf{g}\oplus \mf{m}\ar[r] & \mf{g}
}$$ 
can be identified with the category of algebras over an operad in (weakly filtered) chain complexes over $k$. Indeed, such a retract diagram can equivalently be encoded by the following kind of algebraic structure on $\mf{m}$:
\begin{itemize}[leftmargin=*]
\item[$\sbullet$] $\mf{m}$ has the structure of a (weakly filtered) $A$-module.
\item[$\sbullet$] $\mf{m}$ comes equipped with an $A$-linear $L_\infty$-structure, since the anchor map vanishes on $\mf{m}$.
\item[$\sbullet$] for each set of elements $x_1, \dots, x_n\in \mf{g}$, the $(n+k)$-ary bracket on $\mf{g}\oplus \mf{m}$ determines a $k$-ary operation $[x_1, \dots, x_n, (-)]\colon \mf{m}^{\otimes k}\rt \mf{m}$ of degree $k-2$, for each $k\geq 1$. 
\end{itemize}
These operations have to satisfy equations stating that certain sums of their composites are zero. This type of algebraic structure can precisely be encoded by means of an operad, which has no nullary operations (as one sees from the above description).
\begin{definition}\label{la:def:reducedenvelopingoperad}
The \emph{reduced enveloping operad} $\rEnv_\mf{g}$ of a weakly filtered $L_\infty$-algebroid $\mf{g}$ is the (reduced) weakly filtered dg-operad over $k$ whose algebras $\mf{m}$ are retract diagrams of $L_\infty$-algebroids $\mf{g}\rt \mf{h}=\mf{g}\oplus \mf{m}\rt \mf{g}$.
\end{definition}
\begin{remark}\label{la:rem:presentationofreducedenveloping}
The above definition is somewhat imprecise. More accurately, one can construct the operad $\rEnv_\mf{g}$ in terms of generators of the form 
\begin{itemize} \setlength{\itemsep}{0pt}\setlength{\parskip}{0pt}\setlength{\parsep}{0pt} 
\item $\mu_a$ for $a\in A$ (left multiplication by $a$)
\item $[-, \dots, -]$ (the $L_\infty$-structure on $\mf{m}$)
\item $[x_1, \dots, x_n, -, \dots, -]$ for elements $x_{1}, \dots, x_n$ in $\mf{g}$.
\end{itemize}
These generators have to satisfy an obvious list of equations. For example, there are equations expressing the anti-symmetry and Jacobi identities for the various brackets. Furthermore, the brackets $[x_1, \dots, x_n, -, \dots, -]$ depend $A$-multilinearly on the elements $\xi_i$ and are almost all $A$-multilinear operations themselves, viz.\
\begin{align*}
[a\cdot x_1, \dots, x_n, -, \dots, -] &= \mu_a\circ [x_1, \dots, x_n, -, \dots, -]\\
[x, -]\circ \mu_a &= \mu_a\circ [x, -] + \mu_{x(a)}\\
[x_1, \dots, x_n, -, \dots, -]\circ_i \mu_a &= \mu_a\circ [x_1, \dots, x_n, -, \dots, -].
\end{align*}
\end{remark}
\begin{example}
Suppose that $\mf{g}$ is an $A$-linear $L_\infty$-algebra. Then the reduced enveloping operad of $\mf{g}$ is simply the arity $\geq 1$ part of the usual enveloping operad of $\mf{g}$ (as discussed e.g.\ in \cite[Definition 1.5]{ber09}).
\end{example}
\begin{remark}\label{la:rem:functorialityofreducedenveloping}
A map of $L_\infty$-algebroids $f\colon \mf{g}\rt \mf{h}$ induces a map of reduced enveloping operads $f\colon \rEnv_\mf{g}\rt \rEnv_\mf{h}$, which sends each generator $[x_1, \dots, x_n, -, \dots, -]$ to the generator $[f(x_1), \dots, f(x_n), -, \dots, -]$. The corresponding restriction functor between categories of algebras can be identified with the functor
$$\xymatrix{
f^*\colon \mf{h}\big/\dgLooAlgd_A\big/\mf{h}\ar[r] & \mf{g}\big/\dgLooAlgd_A\big/\mf{g}
}$$
sending $\mf{h}\rt \mf{h}\oplus \mf{m}\rt \mf{h}$ to the pullback $\mf{g}\rt (\mf{h}\oplus\mf{m})\times_{\mf{h}} \mf{g}\rt \mf{g}$.
\end{remark}
The operad structure on $\rEnv_\mf{g}$ is not $A$-linear, but there is a canonical map of operads $\mu\colon A\rt \rEnv_\mf{g}$. Here we consider $A$ as an operad with only unary operations. The adjunction \eqref{diag:freeretractiveliealgebroid} can be identified with the adjunction that restricts and induces operadic algebras along $\mu$. In particular, for every (weakly filtered) \emph{dg-$A$-module} $V$, we can identify
\begin{align*}
\mf{g}\coprod F\big(V\stackrel{0}{\rt} T_A\big) &\cong \mf{g}\oplus \Big(\rEnv_\mf{g}\circ_A V\Big)\\
&= \mf{g}\oplus \bigoplus_{p\geq 1} \rEnv_\mf{g}(p)\otimes_{\Sigma_p\ltimes A^{\otimes p}} V^{\otimes p}.
\end{align*}
Here $\circ_A$ denotes the relative composition product over $A$. In exactly the same way, the coproduct of $\mf{g}$ with the free $L_\infty$-algebroid on a map $0\colon V\rt T_A$ of \emph{chain complexes over $k$} can be identified with the composition product
$$
\mf{g}\coprod \mm{Free}\big(V\stackrel{0}{\rt} T_A\big) \cong\mf{g}\coprod F\big(A\otimes V\stackrel{0}{\rt} T_A\big) \cong \mf{g}\oplus \Big(\rEnv_\mf{g}\circ V\Big)
$$
To simplify the above formulas, let us make the following definition:
\begin{definition}\label{la:def:envelopingsequence}
For any (weakly filtered) $L_\infty$-algebroid $\mf{g}$, let $\Env_\mf{g}$ be the symmetric sequence of (weakly filtered) chain complexes over $k$ given by $\Env_\mf{g}(0)=\mf{g}$ and $\Env_\mf{g}(p)=\rEnv_\mf{g}(p)$ for $p\geq 1$. This determines a functor
$$\xymatrix{
\Env\colon \LooAlgd^{\dg, \mathbb{N}}_A\ar[r] & \cat{BiMod}_A^{\dg, \Sigma, \mathbb{N}}
}$$
to the category of $A$-bimodules of (weakly filtered) symmetric sequences. In other words, each $\Env_\mf{g}(p)$ has a commuting left $A$-module and right $\Sigma_p$-equivariant $A^{\otimes p}$-module structure.
\end{definition}
\begin{remark}
The symmetric sequence $\Env_\mf{g}$ has no natural operad structure.
\end{remark}
\begin{remark}\label{la:rem:envelopingforfilteredvsunfiltered}
Let $\mf{g}$ be a weakly filtered $L_\infty$-algebroid of weight $\leq 0$, i.e.\ an ordinary $L_\infty$-algebroid. Then $\Env_\mf{g}$ is of weight $\leq 0$ as well. Similarly, if $\mf{g}$ is a graded $L_\infty$-algebroid, then $\Env_{\mf{g}}$ is a symmetric sequence of graded complexes. In other words, there is a commuting diagram
$$\xymatrix{
\dgLooAlgd_A\ar[d]_{\Env} \ar[r] & \LooAlgd_A^{\dg, \mathbb{N}}\ar[d]_{\Env} & \LooAlgd_A^{\dg, \mm{gr}}\ar[d]^{\Env}\ar[l]\\
\cat{BiMod}_A^{\dg, \Sigma}\ar[r] & \cat{BiMod}_A^{\dg, \Sigma, \mathbb{N}} & \cat{BiMod}_A^{\dg, \Sigma, \mm{gr}}\ar[l]
}$$
where the horizontal functors are the obvious inclusions.
\end{remark}
\begin{construction}\label{cons:symdec}
If $X$ is a symmetric sequence and $p\geq 0$, consider the symmetric sequence
$$
X(p+(-)):= \big\{X(p+q)\big\}_{q\geq 0}.
$$
The symmetric group actions are obtained by restriction along the inclusion $\Sigma_q\rt \Sigma_p\times\Sigma_q\rt \Sigma_{p+q}$. In particular, each $X(p+q)$ carries an action of $\Sigma_p$ commuting with the $\Sigma_q$-action, so that we can consider $X(p+(-))$ as a $\Sigma_p$-object in symmetric sequences. Consequently, any composition product $X(p+(-))\circ Y$ has a natural $\Sigma_p$-action.
\end{construction}
\begin{lemma}\label{la:lem:propertiesofenveloping}
The functor $\Env\colon \LooAlgd_A^{\dg, \mathbb{N}}\rt \cat{BiMod}_A^{\dg, \Sigma, \mathbb{N}}$ has the following properties:
\begin{enumerate}\setlength{\itemsep}{0pt}\setlength{\parskip}{0pt}\setlength{\parsep}{0pt}
 \item It preserves all filtered colimits and reflexive coequalizers.
 \item Let $\mf{g}\rt T_A$ be a map of weakly filtered $k$-linear $L_\infty$-algebras and let $A\otimes\mf{g}\rt T_A$ be the associated action $L_\infty$-algebroid. Then there is a natural isomorphism of symmetric $A$-bimodules
 $$
 \Env_{A\otimes\mf{g}}\cong A\otimes (L_\infty)_\mf{g}
 $$
 where $(L_\infty)_\mf{g}$ is the enveloping operad of the $L_\infty$-algebra $\mf{g}$. The bimodule structure is induced by the canonical bimodule structure on $A$.
 \item If $\mf{g}$ is a weakly filtered $L_\infty$-algebroid and $V$ is a weakly filtered dg-$A$-module, then there is an isomorphism of $A$-bimodules with a $\Sigma_p$-action
 $$
 \Env_{\mf{g}\coprod F\big(V\rto{0} T_A\big)}(p)\cong \Env_\mf{g}\big(p+(-)\big)\circ_A V.
 $$
 \item The functor $\Env$ commutes with taking the colimit and associated graded of a weakly filtered $L_\infty$-algebroid. In other words, there is a commuting diagram
 $$\xymatrix{
\dgLooAlgd_A\ar[d]_{\Env} & \LooAlgd_A^{\dg, \mathbb{N}}\ar[d]_{\Env}\ar[l]_-{\colim}\ar[r]^{\mm{gr}} & \LooAlgd_A^{\dg, \mm{gr}}\ar[d]^{\Env}\\
\cat{BiMod}_A^{\dg, \Sigma} & \cat{BiMod}_A^{\dg, \Sigma, \mathbb{N}} \ar[l]^-{\colim}\ar[r]_{\mm{gr}} & \cat{BiMod}_A^{\dg, \Sigma, \mm{gr}}.
}$$
\end{enumerate}
\end{lemma}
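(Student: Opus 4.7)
My plan is to deduce all four assertions from the explicit presentation of $\rEnv_\mf{g}$ by generators and relations given in Remark \ref{la:rem:presentationofreducedenveloping}. That presentation realises $\rEnv_\mf{g}$ as the quotient of the free (weakly filtered) operad on a symmetric sequence built functorially from $A$, the $L_\infty$-brackets, and tensor powers of $\mf{g}$, modulo relations of the same form. Since free operads, quotients by relations, and tensor powers all commute with filtered colimits and reflexive coequalizers, the same then holds for $\rEnv_\mf{g}$ in arities $\geq 1$; the arity-zero component $\Env_\mf{g}(0) = \mf{g}$ is trivially compatible, giving (1).

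For (2), I would verify the claimed isomorphism through its universal property. An algebra over $\rEnv_{A \otimes \mf{g}}$ is by definition an $A$-module $\mf{m}$ equipped with the data making $(A \otimes \mf{g}) \oplus \mf{m}$ into an $L_\infty$-algebroid. Unpacking this using the Leibniz rules of Definition \ref{la:def:loorinehartalgebras}, the required structure amounts to an $A$-linear $L_\infty$-algebra structure on $\mf{m}$ together with an $L_\infty$-action of $\mf{g}$ on $\mf{m}$ by $A$-linear derivations; equivalently, an $(L_\infty)_\mf{g}$-algebra structure on $\mf{m}$ in $A$-modules, which is precisely an algebra over the symmetric $A$-bimodule of sequences $A \otimes (L_\infty)_\mf{g}$. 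The $A$-bimodule structures on the two sides match via the generator $\mu_a$. For (3), the universal property of the coproduct $\mf{g}' := \mf{g} \amalg F(V \stackrel{0}{\rt} T_A)$ decomposes a retract diagram $\mf{g}' \rt \mf{g}' \oplus \mf{m} \rt \mf{g}'$ into a retract diagram over $\mf{g}$ together with a dg-$A$-module map $V \rt \mf{m}$. Because the anchor vanishes on $V$, the induced operations $[v_1, \ldots, v_q, -, \ldots, -]$ on $\mf{m}$ carry no Leibniz correction terms in the $v_i$ and are therefore strictly $A$-multilinear in them; they are thus classified by elements of $\rEnv_\mf{g}(p+q) \otimes_{\Sigma_q \ltimes A^{\otimes q}} V^{\otimes q}$, and summing over $q \geq 0$ yields the claimed composition product (with the $q=0$, $p=0$ summand reproducing $\mf{g}$).

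Part (4) has two halves. Commutation with $\colim$ is immediate from (1), since the colimit of a weakly filtered object is a filtered colimit of its structure maps. Commutation with $\mm{gr}$ also follows from the same presentation: each generator $[\xi_1, \ldots, \xi_n, -, \ldots, -]$ of $\rEnv_\mf{g}$ has filtration weight equal to the sum of the weights of the $\xi_i$, and the defining relations are homogeneous in total weight; passing to the associated graded thus replaces each $\xi_i$ by its class in $\mm{gr}(\mf{g})$ and yields the generators-and-relations presentation of $\rEnv_{\mm{gr}(\mf{g})}$. I expect the main obstacle will be consistent bookkeeping of the non-$A$-linear relation $[\xi, -] \circ \mu_a = \mu_a \circ [\xi, -] + \mu_{\xi(a)}$: it mixes the $\mf{g}$-action with $A$-multiplication and is the one genuine source of subtlety in both the universal-property arguments for (2) and (3) and in verifying the weight-homogeneity of the relations needed for the $\mm{gr}$ half of (4).
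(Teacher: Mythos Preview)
Your approach for (1) is one of the two the paper itself mentions, so that part is fine. Parts (2)--(4), however, diverge from the paper in a way that introduces real gaps.

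For (2), the paper does \emph{not} compare categories of algebras. Instead it uses the ``representable'' description
\[
\Env_{A\otimes\mf{g}}\circ V \;\cong\; (A\otimes\mf{g})\amalg\mm{Free}(V\rto{0} T_A)
\]
for a $k$-linear chain complex $V$, observes that a coproduct of action algebroids is again an action algebroid, and reads off the identification with $(A\otimes(L_\infty)_\mf{g})\circ V$. Your route via algebras has a genuine problem: the unary operation $[\xi,-]$ on $\mf{m}$ (for $\xi\in\mf{g}$ with nonzero anchor) satisfies $[\xi,a\cdot m]=a[\xi,m]+\xi(a)\cdot m$ and is \emph{not} $A$-linear, so a $\rEnv_{A\otimes\mf{g}}$-algebra is not an $(L_\infty)_\mf{g}$-algebra in $A$-modules in any naive sense. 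Relatedly, the lemma only asserts an isomorphism of symmetric $A$-bimodules; $A\otimes(L_\infty)_\mf{g}$ need not carry a compatible operad structure, so an equivalence of algebra categories does not even obviously yield the bimodule statement you want.

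For the $\colim$ half of (4), your reduction to (1) fails as stated. A weakly filtered $L_\infty$-algebroid $\mf{g}$ is a sequence $\mf{g}^{(0)}\rt\mf{g}^{(1)}\rt\cdots$ of chain complexes, but the individual $\mf{g}^{(i)}$ are \emph{not} $L_\infty$-algebroids (the bracket of two weight-$1$ elements lands in weight $2$), so this is not a filtered diagram in $\dgLooAlgd_A$ to which (1) applies. The paper instead writes an arbitrary weakly filtered $L_\infty$-algebroid as a reflexive coequalizer of free ones, uses (1) to reduce to the free case, and then checks the free case explicitly via (2) and the known formula for the enveloping operad of a free $L_\infty$-algebra. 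The same reduction handles $\mm{gr}$; your generators-and-relations argument for $\mm{gr}$ is plausible but would need care about how the presentation interacts with the (non-injective) structure maps of a weakly filtered object.

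For (3) the paper again uses the representable side, computing $\Env_{\mf{g}\amalg F(V)}\circ_A W$ as $\mf{g}\amalg F(V\oplus W)\cong\Env_\mf{g}\circ_A(V\oplus W)$ and expanding. Your corepresentable sketch may be salvageable, but the paper's route avoids the bookkeeping you flag at the end.
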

\begin{remark}\label{rem:envelopingoperadfreelooalgebra}
Consider Lemma \ref{la:lem:propertiesofenveloping}(3) in the special case where $A=k$ and $\mf{g}=0$. In this case $\Env_\mf{g}=L_\infty$ is just the $L_\infty$-operad and one retrieves the formula for the enveloping operad of a free $L_\infty$-algebra (cf.\ \cite[Proposition 1.6]{ber09})
$$
\big(L_\infty\big)_{L_\infty(V)}(p)= \sum_{q\geq 0} L_{\infty}(p+q)\otimes_{\Sigma_q} V^{\otimes q}.
$$
\end{remark}
\begin{proof}
Since filtered colimits and reflexive coequalizers of $L_\infty$-algebroids are computed at the level of the underlying complexes, part (1) follows either from the explicit description of $\Env_\mf{g}$ in terms of generators and relations (Remark \ref{la:rem:presentationofreducedenveloping}) or from the fact that for any such diagram $\mf{g}_\sbullet$, there is an isomorphism
\begin{align*}
\Env_{\colim(\mf{g}_\sbullet)} \circ_A V & \cong \big(\colim \mf{g}_\sbullet\big)\amalg F\big(V\rto{0} T_A\big)\\
&\cong \colim \Big(\mf{g}_\sbullet\amalg F\big(V\rto{0} T_A\big)\Big) \cong \colim \big(\Env_{\mf{g}_\sbullet}\big)\circ_A V.
\end{align*}
For (2), consider an action $L_\infty$-algebroid $A\otimes \mf{g}$ and let $V$ be a chain complex. The free $L_\infty$-algebroid on $0\colon V\rt T_A$ is the $A$-linear extension of the free $L_\infty$-algebra $L_\infty(V)$ generated by $V$. It follows that
\begin{align*}
\Env_{A\otimes\mf{g}}\circ V &\cong\big(A\otimes\mf{g}\big)\amalg \mm{Free}\big(V\rto{0} T_A\big) \\
&\cong A\otimes\Big(\mf{g}\amalg L_\infty(V)\Big) \cong \big(A\otimes (L_\infty)_\mf{g}\big)\circ V
\end{align*}
is the $A$-linear extension of the coproduct of $k$-linear $L_\infty$-algebras $\mf{g}\amalg L_\infty(V)$. This induces an isomorphism of symmetric sequences $\Env_{A\otimes \mf{g}}\cong A\otimes (L_\infty)_\mf{g}$, which identifies the $A$-bimodule structure on $\Env_{A\otimes \mf{g}}$ with the bimodule structure on $A\otimes (L_\infty)_\mf{g}$ arising from $A$.

For (3), observe that for any dg-$A$-module $W$, there are natural isomorphisms
\begin{align*}
\Env_{\mf{g}\amalg F(V\rto{0} T_A)}\circ_A W & \cong \mf{g}\amalg F(V\rto{0} T_A)\amalg F(W\rto{0} T_A)\\
&\cong \mf{g}\amalg F(V\oplus W\rto{0} T_A)\\
&\cong \Env_\mf{g}\circ_A (V\oplus W)\\
&\cong \bigoplus_{p\geq 0} \bigg(\bigoplus_{q\geq 0}\Env_\mf{g}(p+q)\otimes_{\Sigma_q\ltimes A^{\otimes q}} V^{\otimes q}\bigg)\otimes_{\Sigma_p\ltimes A^{\otimes p}} W^{\otimes p}\\
&\cong \bigoplus_{p\geq 0}\Big(\Env_\mf{g}(p+(-))\circ_A V\Big)\otimes_{\Sigma_p\ltimes A^{\otimes p}} W^{\otimes p}
\end{align*}
The fourth equality uses the binomial formula $(V\oplus W)^{\otimes n} \cong \bigoplus_{p+q=n} V^{\otimes q}\otimes W^{\otimes p}$. It follows that $\Env_{\mf{g}\amalg F(V)}$ is isomorphic, in each arity $p$, to the $\Sigma_p$-equivariant object $\Env_\mf{g}(p+(-))\circ_A V$ (Construction \ref{cons:symdec}).

For (4), recall that $\Env$ commutes with the inclusions of objects of weight $\leq 0$ (resp.\ graded objects) into weakly filtered objects (Remark \ref{la:rem:envelopingforfilteredvsunfiltered}). This implies that there is a natural transformation 
$$\xymatrix{
\nu\colon \colim \circ \Env\ar[r] & \Env\circ \colim
}$$
and similarly for the functor taking the associated graded. Any weakly filtered $L_\infty$-algebroid can be obtained as a reflexive coequalizer of free $L_\infty$-algebroids generated by weakly filtered chain complexes over $T_A$. Since the functors $\colim$ and $\Env$ preserve reflexive coequalizers, it suffices to check that $\nu$ induces an isomorphism for such a free $L_\infty$-algebroid. 

For a free $L_\infty$-algebroid $\mm{Free}(\rho\colon V\rt T_A)$, we can use part (2) and the description of the enveloping operad of a free $L_\infty$-algebra (Remark \ref{rem:envelopingoperadfreelooalgebra}) to see that 
$$\xymatrix{
\nu\colon \colim\Big(\Env_{\mm{Free}(V)}(p)\Big)\ar[r] & \Env_{\colim(\mm{Free}(V))}(p)
}$$
is the $A$-linear extension of the map
$$\xymatrix@C=1pc{
\colim \left(\bigoplus_{q\geq 0} L_\infty(p+q)\otimes_{\Sigma_q} V^{\otimes q}\right)\ar[r] & \bigoplus_{q\geq 0}L_\infty(p+q)\otimes_{\Sigma_q} \big(\colim V\big)^{\otimes q}.
}$$ 
This map is an isomorphism for any $\mathbb{N}$-diagram $V$. The same argument applies to the functor taking the associated graded.
\end{proof}

\paragraph{Pushouts along free maps}
Let us now consider more general pushout diagrams of $L_\infty$-algebroids of the form
\begin{equation}\label{la:diag:pushoutalongcofibration}\vcenter{\xymatrix@R=1.8pc@C=1.9pc{
F(W\rt T_A)\ar[r]^{F(i)}\ar[d] & F(V\rt T_A)\ar[d]\\
\mf{g}\ar[r] & \mf{g}\coprod_{F(W)} F(V).
}}\end{equation}
Here $i\colon V\rt W$ is any monomorphism of dg-$A$-modules over $T_A$; the maps $V\rt T_A$ and $W\rt T_A$ need not be zero. 

We can realise Diagram \eqref{la:diag:pushoutalongcofibration} as the colimit of a pushout diagram of \emph{weakly filtered} $L_\infty$-algebroids. More precisely, let us endow the objects appearing in the above square with the following filtrations:
\begin{itemize}
\item $\mf{g}$ and $W$ have weight $\leq 0$, i.e.\ we simply take the constant $\mathbb{N}$-diagrams on $\mf{g}$ and $W$.
\item let $\widetilde{V}$ be the filtered dg-$A$-module
$$\xymatrix{
W\ar[r]^f & V\ar[r]^= & V\ar[r] & V\ar[r] & \cdots .
}$$
together with the obvious map to $T_A$ (which has weight $\leq 0$).
\end{itemize}
Diagram \eqref{la:diag:pushoutalongcofibration} is the colimit over $\mathbb{N}$ of the pushout square of weakly filtered $L_\infty$-algebroids
\begin{equation}\label{la:diag:pushoutalongfilteredcofibration}\vcenter{\xymatrix@R=1.8pc@C=1.9pc{
F(W)\ar[r]^{F(f)}\ar[d] & F(\widetilde{V})\ar[d]\\
\mf{g}\ar[r] & \mf{g}\coprod_{F(W)} F(\widetilde{V})=:\mf{h}.
}}\end{equation}
Indeed, the colimit of $\widetilde{V}$ is simply $V$ and taking colimits over $\mathbb{N}$ commutes with all colimits and free functors. On the other hand, the associated graded of \eqref{la:diag:pushoutalongfilteredcofibration} is given by
\begin{equation}\label{la:eq:gradedofalgebroidpushout}
\mm{gr}\Big(\mf{g}\coprod_{F(W)} F(\widetilde{V})\Big) \cong \mf{g}\coprod_{F(W)}F\big(\mm{gr}(\widetilde{V})\big) \cong \mf{g} \amalg F\big(V/W\big).
\end{equation}
The last isomorphism uses that the associated graded of $\widetilde{V}$ is $W\oplus V/W$, with $W$ of weight $0$ and $V/W$ of weight $1$. In particular, the map $V/W\rt T_A$ is the \emph{zero map}, since $T_A$ has weight $0$. 
\begin{proposition}\label{la:prop:reductionto0map}
Let $i\colon W\rt V$ be a monomorphism of dg-$A$-modules over $T_A$ whose cokernel is graded-free. Assume that $\mf{g}$ is an $L_\infty$-algebroid over $A$ for which the following holds: 
\begin{itemize}
\item[($\star$)] without differentials, $\mf{g}$ is a retract of the free $L_\infty$-algebroid generated by a map of graded vector spaces $M\rt T_A$.
\end{itemize}
Then the weakly filtered $L_\infty$-algebroid $\mf{h}:=\mf{g}\coprod_{F(W)} F(\widetilde{V})$ from \eqref{la:diag:pushoutalongfilteredcofibration} has the following two properties:
\begin{enumerate}
\item the weakly filtered symmetric sequence $\Env_{\mf{h}}$ is \emph{filtered}: in other words, for each $p$, there is a sequence of \emph{injections} 
$$
\Env_{\mf{h}}(p)^{(0)}\rt \Env_{\mf{h}}(p)^{(1)}\rt \Env_{\mf{h}}(p)^{(2)}\rt \cdots.
$$
\item the filtration on $\Env_{\mf{h}}$ has associated graded
$$
\mm{gr}\big(\Env_{\mf{h}}(p)\big) \cong \Env_{\mf{g}}\big(p+(-)\big)\circ_A (W/V)
$$
where $W/V$ has weight $1$ and $\Env_{\mf{g}}(p+(-))$ is as in Construction \ref{cons:symdec}.
\end{enumerate}
\end{proposition}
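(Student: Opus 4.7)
Plan: I would reduce part~(2) to part~(1), then reduce part~(1) to the case where $\mf{g}$ is itself a free $L_\infty$-algebroid using hypothesis~$(\star)$, and finally analyze the free case directly.

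For part~(2) assuming~(1): the weakly filtered sequence $\{\Env_{\mf{h}}(p)^{(n)}\}_n$ is then a genuine filtration, so its associated graded is $\mm{gr}\big(\Env_{\mf{h}}(p)\big)$. By Lemma~\ref{la:lem:propertiesofenveloping}(4) this coincides with $\Env_{\mm{gr}(\mf{h})}(p)$, and $\mm{gr}(\mf{h})$ was identified in~\eqref{la:eq:gradedofalgebroidpushout} as $\mf{g}\amalg F\big(V/W\rto{0}T_A\big)$ with $V/W$ placed in weight~$1$. Applying Lemma~\ref{la:lem:propertiesofenveloping}(3) in the graded category then produces the claimed formula $\Env_{\mf{g}}((-)+p)\circ_A (V/W)$.

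For part~(1), the hypothesis~$(\star)$ allows me to write $\mf{g}$ as a graded retract of $\tilde{\mf{g}}:=F\big(A\otimes M\rt T_A\big)$. I would form $\tilde{\mf{h}}:=\tilde{\mf{g}}\amalg_{F(W)}F(\widetilde{V})$ using the composite $F(W)\rt\mf{g}\rt\tilde{\mf{g}}$ as attaching map. The universal property of the pushout, together with the retraction $\tilde{\mf{g}}\rt\mf{g}$, yields a retract $\mf{h}\rt\tilde{\mf{h}}\rt\mf{h}$ of weakly filtered $L_\infty$-algebroids, hence by functoriality of $\Env$ a retract of weakly filtered symmetric $A$-bimodules. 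Since retracts of monomorphisms of graded $k$-modules are monomorphisms, this reduces part~(1) to the case of $\tilde{\mf{h}}$.

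In the free case, $\tilde{\mf{h}}\cong F(\tilde{N})$ with $\tilde{N}:=(A\otimes M)\amalg_W\widetilde{V}$ computed in weakly filtered dg-$A$-modules over $T_A$ (as $F$ is a left adjoint). The graded-freeness of $V/W$ makes it projective in the over-category $\dgMod^{\mm{gr}}_A/T_A$, so the short exact sequence $0\rt A\otimes M\rt\tilde{N}^{(1)}\rt V/W\rt 0$ admits a section over $T_A$; this identifies $\tilde{N}$ with the direct sum of $A\otimes M$ in weight~$0$ and $V/W$ in weight~$1$, and correspondingly $\tilde{\mf{h}}\cong\tilde{\mf{g}}\amalg F\big(V/W[\mm{wt}\,1]\rt T_A\big)$. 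Combining Lemma~\ref{la:lem:propertiesofenveloping}(2) (which describes $\Env_{\tilde{\mf{g}}}$ as $A\otimes (L_\infty)_{L_\infty(M)}$) with a repeated application of Lemma~\ref{la:lem:propertiesofenveloping}(3) to expansions of the form $\tilde{\mf{h}}\amalg F\big(Z\rto{0}T_A\big)$, one obtains an explicit decomposition of $\Env_{\tilde{\mf{h}}}(p)^{(n)}$ as a direct sum indexed by the number $0\leq k\leq n$ of factors drawn from $V/W$, making each filtration transition a split monomorphism.

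The main obstacle is the third step: the anchor $V/W\rt T_A$ may be nonzero at finite weights (it vanishes only on the associated graded), so the Leibniz rule genuinely couples the weight-$1$ generators to $A$ in $\tilde{\mf{h}}$. The graded-freeness of $V/W$ is essential both for the splitting of $\tilde{N}$ in the over-category and for cleanly separating the weight-$k$ contributions, and some care is needed to verify that these pieces assemble as a direct sum rather than a more complicated iterated extension.
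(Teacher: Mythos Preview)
Your argument for part~(2) and the retract reduction for part~(1) are both fine, and match the paper's approach (though note that part~(2) does not actually depend on part~(1): the functor $\mm{gr}$ is defined on all weakly filtered objects, not just Reedy cofibrant ones, so Lemma~\ref{la:lem:propertiesofenveloping}(4) applies directly).

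The genuine gap is in your free case. You claim that $\tilde{\mf{h}}\cong F(\tilde{N})$ with $\tilde{N}=(A\otimes M)\amalg_W\widetilde{V}$, appealing to $F$ being a left adjoint. But $F$ being a left adjoint only gives $F(X)\amalg_{F(Y)}F(Z)\cong F(X\amalg_Y Z)$ when the attaching map $F(Y)\rt F(X)$ is itself of the form $F(f)$ for some $f\colon Y\rt X$ in $\dgMod_A/T_A$. Here the attaching map $F(W)\rt \tilde{\mf{g}}=F(A\otimes M)$ is the composite $F(W)\rt\mf{g}\rt\tilde{\mf{g}}$, which by adjunction corresponds to a map $W\rt U(\tilde{\mf{g}})=A\otimes L_\infty(M)$; there is no reason for this to land in the generators $A\otimes M$. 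So the pushout $\tilde{\mf{h}}$ is not free on any evident $A$-module, and your subsequent analysis of $\tilde{N}$ does not apply.

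The paper circumvents this by reversing the order of the two reductions. Working without differentials, one first uses graded-freeness of $V/W$ to split $\widetilde{V}\cong W\oplus(A\otimes N)$ over $T_A$ (with $N$ a graded $k$-vector space of weight~$1$). This turns the map $F(W)\rt F(\widetilde{V})$ into the coproduct inclusion $F(W)\rt F(W)\amalg\mm{Free}(N)$, so that the pushout collapses to $\mf{h}\cong\mf{g}\amalg\mm{Free}(N)$ regardless of the attaching map. Only then does one invoke $(\star)$ to exhibit $\mf{h}$ as a retract of $\mm{Free}(M)\amalg\mm{Free}(N)=\mm{Free}(M\oplus N)$, whose enveloping sequence is visibly filtered by Lemma~\ref{la:lem:propertiesofenveloping}(2).
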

\begin{proof}
Let us start by verifying part (2): recall from Lemma \ref{la:lem:propertiesofenveloping}(4) that $\Env$ commutes with taking the associated graded. We then find that
$$
\mm{gr}(\Env_{\mf{h}}(p)) \cong \Env_{\mf{g}\coprod F(V/W)}(p)\cong \Env_{\mf{g}}(p+(-))\circ_A (W/V)
$$
using the isomorphism \eqref{la:eq:gradedofalgebroidpushout} and Lemma \ref{la:lem:propertiesofenveloping}(3).

To verify assertion (1), we can forget about all differentials. Without differentials, we can split $\widetilde{V}$ as a direct sum 
$$\xymatrix{
\widetilde{V}\cong W\oplus \big(A\otimes N\big) \ar[r] & T_A.
}$$
Here $N$ is a graded vector space generating the graded-free $A$-module $V/W$ (of weight $1$). The map $F(W)\rt F(\widetilde{V})$ can then be identified with the map 
$$\xymatrix{
F(W)\ar[r] & F(W)\amalg \mm{Free}(N)
}$$
into the coproduct with the free $L_\infty$-algebroid on a map of filtered $k$-modules $N\rt T_A$. By our assumption on $\mf{g}$, we can realise $\mf{h}$ as a retract of
$$
\mm{Free}(M)\coprod_{F(W)} F(\widetilde{V}) \cong \mm{Free}(M\oplus N\rt T_A).
$$
This is the free (weakly filtered) $L_\infty$-algebroid on the filtered $k$-module $M\oplus N$. In other words, it is given by $A\otimes L_\infty(M\oplus N)$. It follows that each $\Env_{\mf{h}}(p)$ is (without differentials) a retract of
\begin{align}\label{eq:envoffree}
\Env_{\mm{Free}(M\oplus N)}(p)&\cong A\otimes (L_{\infty})_{L_\infty(M\oplus N)}(p) \nonumber\\
&\cong \bigoplus_{q\geq 0} A\otimes L_\infty(p+q)\otimes_{\Sigma_q} (M\oplus N)^{\otimes q}.
\end{align}
Here the first isomorphism is from Lemma \ref{la:lem:propertiesofenveloping}(2) and the second isomorphism follows from Remark \ref{rem:envelopingoperadfreelooalgebra}. Since $M\oplus N$ is filtered, the symmetric sequence \eqref{eq:envoffree} is filtered and it follows that the retract $\Env_{\mf{h}}$ is filtered as well.
\end{proof}

\paragraph{Proof of Theorem \ref{la:thm:modelstructureondglralgebras}} 
We have to prove that the free-forgetful adjunction
$$\xymatrix{
F\colon \dgMod_A/T_A\ar@<1ex>[r] & \dgLooAlgd_A\ar@<1ex>[l]\colon U
}$$
satisfies the conditions of Lemma \ref{app:lem:transferlemma}, guaranteeing the existence of a transferred semi-model structure. Since the forgetful functor $U$ preserves filtered colimits, it suffices to show that for any cofibrant $L_\infty$-algebroid $\mf{g}$ and any generating trivial cofibration $0\rt A[n, n+1]$ in $\dgMod_A/T_A$, the map $\mf{g}\rt \mf{g}\amalg F(A[n, n+1])$ is a trivial cofibration of chain complexes:
\begin{lemma}\label{lem:hoinvarianceofenveloping}
If $\mf{g}$ is a cofibrant $L_\infty$-algebroid, then the map 
$$\xymatrix{
\mf{g}\ar[r] & \mf{g}\amalg F\big(A[n, n+1]\rt T_A\big)
}$$ 
is a trivial cofibration of chain complexes. In fact, the map of symmetric sequences $\Env_\mf{g}\rt \Env_{\mf{g}\coprod F(A[n, n+1])}$ is a levelwise trivial cofibration as well.
\end{lemma}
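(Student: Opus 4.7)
The plan is to exploit the weakly filtered construction from Section~\ref{la:sec:reductiontoliealgebracase}, applied with $W=0$ and $V=A[n,n+1]$, so that $\mf{h}:=\mf{g}\amalg F(A[n,n+1])$ is realized as the colimit of a weakly filtered $L_\infty$-algebroid $\wtilde{\mf{h}}$ whose associated graded is $\mf{g}\amalg F(A[n,n+1])$ with $A[n,n+1]$ placed in weight~$1$ (so that its map to $T_A$, being of weight~$0$, is killed in the associated graded). To invoke Proposition~\ref{la:prop:reductionto0map}, I first observe that any cofibrant $L_\infty$-algebroid $\mf{g}$ satisfies hypothesis~($\star$): up to retract, $\mf{g}$ is a transfinite composition of pushouts along the generating cofibrations in the image of $F$, and after forgetting differentials each such cell attachment simply adjoins free generators, so the underlying graded $L_\infty$-algebroid is free on a graded $k$-vector space mapping to $T_A$.

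With ($\star$) established, Proposition~\ref{la:prop:reductionto0map} endows $\Env_{\wtilde{\mf{h}}}$ with a filtration by injections whose associated graded in weight~$q$ and arity~$p$ is
$$
\Env_{\mf{g}}(p+q)\otimes_{\Sigma_q\ltimes A^{\otimes q}} A[n,n+1]^{\otimes q}.
$$
Because $\Env$ commutes with colimits (Lemma~\ref{la:lem:propertiesofenveloping}(4)), the colimit $\Env_{\mf{h}}(p)=\colim\Env_{\wtilde{\mf{h}}}^{(\sbullet)}(p)$ inherits a filtration with the same associated graded, whose weight-$0$ stage is $\Env_{\mf{g}}(p)$. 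Hence the natural map $\Env_{\mf{g}}(p)\rt\Env_{\mf{h}}(p)$ is the inclusion of the initial stage of a filtration by injections, which is a monomorphism and thus a cofibration of chain complexes. To promote this to a quasi-isomorphism, it suffices to show that every associated graded piece in weight $q\geq 1$ is acyclic, for then the cofiber $\Env_{\mf{h}}(p)/\Env_{\mf{g}}(p)$ is a filtered complex with acyclic graded and is therefore itself acyclic. Specializing to $p=0$, where $\Env_{\mf{g}}(0)=\mf{g}$ and $\Env_{\mf{h}}(0)=\mf{h}$, yields the claim for $\mf{g}\rt\mf{g}\amalg F(A[n,n+1])$ itself.

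For the acyclicity, I would use that $A[n,n+1]\cong A\otimes_k k[n,n+1]$ is contractible as a $k$-chain complex and free (hence flat) as an $A$-module, so that $A[n,n+1]^{\otimes q}$ is contractible as a $k$-complex and flat over $A^{\otimes q}$. Tensoring with $\Env_{\mf{g}}(p+q)$ over $A^{\otimes q}$ preserves contractibility by flatness, and taking $\Sigma_q$-coinvariants preserves it because we work in characteristic zero, where coinvariants agree with invariants and are exact. The main obstacle is precisely this acyclicity step: both the flatness of $A[n,n+1]^{\otimes q}$ over $A^{\otimes q}$ and the use of characteristic zero to commute $\Sigma_q$-coinvariants past the quasi-isomorphism are essential, and checking ($\star$) for cofibrant $\mf{g}$ is a standard but necessary transfinite induction along the cell structure.
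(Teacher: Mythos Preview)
Your argument is correct and follows essentially the same route as the paper: verify condition~($\star$) for cofibrant $\mf{g}$ by observing that cell attachments along generating cofibrations just adjoin free generators when differentials are forgotten, then invoke Proposition~\ref{la:prop:reductionto0map} (with $W=0$, $V=A[n,n+1]$) to obtain a filtration on $\Env_{\mf{h}}(p)$ whose associated graded $\Env_\mf{g}((-)+p)\circ_A A[n,n+1]$ is acyclic in positive weight. Your treatment is somewhat more explicit about the acyclicity step (unpacking the relative composition product and spelling out the roles of flatness and characteristic zero), whereas the paper simply records that $A[n,n+1]$ is acyclic and leaves the rest implicit.
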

\begin{proof}
The generating cofibrations of $\dgLooAlgd_A$ are given by the maps 
$$\xymatrix{
F\big(A[n]\rt T_A\big)\ar[r] & F\big(A[n, n+1]\rt T_A\big)
}$$
Forgetting differentials, the pushout of an $L_\infty$-algebroid along such a map simply adds a single generator in degree $n+1$. It follows that any cofibrant $L_\infty$-algebroid $\mf{g}$ satisfies condition ($\star$) from Proposition \ref{la:prop:reductionto0map}, so that $\Env_{\mf{g}\coprod F(A[n, n+1])}(p)$ admits a filtration with associated graded
$$
\Env_\mf{g}(p+(-))\circ_A A[n, n+1].
$$
Since $A[n, n+1]$ is acyclic, it follows that the map $\Env_\mf{g}(p)\rt \Env_{\mf{g}\coprod F(A[n, n+1])}(p)$ is a trivial cofibration of chain complexes.
\end{proof}

\paragraph{Proof of Theorem \ref{la:thm:monadicity}}
We have to prove that the forgetful functor
$$\xymatrix{
U\colon \dgLooAlgd_A\ar[r] & \dgMod_A/T_A
}$$
preserves cofibrant objects and sifted homotopy colimits. Our proof will follow the lines of e.g. \cite[Lemma 4.5.4.12]{lur16}:
\begin{definition}\label{la:def:gooddiagram}
Let $\cat{M}$ be a (semi-) model category and let $X\colon \cat{J}\rt \cat{M}$ be a diagram. We will say that $X$ is \emph{good} if it satisfies the following conditions:
\begin{itemize}\setlength{\itemsep}{0pt}\setlength{\parskip}{0pt}\setlength{\parsep}{0pt}
 \item each object $X(j)$ is cofibrant.
 \item the colimit $\colim X$ is cofibrant.
 \item the map $\hocolim X\rt \colim X$ is a weak equivalence.
\end{itemize}
More generally, we say that a map of $\cat{J}$-diagrams $X\rt Y$ in $\cat{M}$ is good if
\begin{enumerate}[label=\Roman*, leftmargin=*]
\setlength{\itemsep}{0pt}
\setlength{\parskip}{0pt}
\setlength{\parsep}{0pt}
 \item[(i)] $X$ and $Y$ are both good.
 \item[(ii)] each $X(j)\rt Y(j)$ is a cofibration in $\cat{M}$.
 \item[(iii)] $\colim X\rt \colim Y$ is a cofibration in $\cat{M}$.
\end{enumerate}
\end{definition}
Let $\dgMod_{A, A^{\otimes p}}$ denote the category of dg-$A$-$A^{\otimes p}$-bimodules. We endow this category with the model structure in which a map is a weak equivalence (fibration) if and only if the underlying map of left dg-$A$-modules is a weak equivalence (fibration). With these definitions, Theorem \ref{la:thm:monadicity} follows from the following assertion by taking $p=0$:
\begin{theorem}\label{la:thm:monadicityrephrased}
Let $\cat{J}$ be a homotopy sifted category and let $\mf{g}\colon \cat{J}\rt \dgLooAlgd_A$ be a projectively cofibrant diagram (Definition \ref{def:projectivemodelstructure}). Then the diagram 
$$\xymatrix{
\Env_\mf{g}(p)\colon \cat{J}\ar[r] & \dgMod_{A, A^{\otimes p}}
}$$
is good for each $p\geq 0$.
\end{theorem}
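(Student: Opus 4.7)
The plan is to argue by cell induction in the spirit of \cite[Lemma 4.5.4.12]{lur16}, using the filtration of Proposition \ref{la:prop:reductionto0map} together with the homotopy cofinality of the diagonals $\cat{J}\rt\cat{J}^{\times n}$. Since goodness is closed under retracts, transfinite composition of levelwise cofibrations, and pushouts along good maps, it suffices to check goodness for a cell complex $\mf{g}$ built by attaching generating projective cofibrations $j_!F(i)$, where $j\in\cat{J}$ and $i\colon A[n]\rt A[n, n+1]$. The base case $\mf{g}=0$ gives $\Env_{\mf{g}}(0)=0$ and, by Lemma \ref{la:lem:propertiesofenveloping}(2), $\Env_{\mf{g}}(p)\cong A\otimes L_\infty(p)$ for $p\geq 1$; as constant diagrams on cofibrant objects, these are good because any homotopy sifted category is nonempty and has contractible nerve.

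For the inductive step, I assume $\Env_{\mf{g}}(p)$ is good for all $p$ and form the pushout $\mf{g}\rt \mf{g}'$ along some $j_!F(i)$ with $i\colon W\rt V$ a cofibration in $\dgMod_A/T_A$ whose cokernel $V/W$ is $A$-cofibrant (which is the case for the generating cofibrations). Applying Proposition \ref{la:prop:reductionto0map} objectwise in $\cat{J}$ produces a natural filtration of $\Env_{\mf{g}'}(p)$ by levelwise cofibrations, with associated graded
\[
\mm{gr}\,\Env_{\mf{g}'}(p) \;\cong\; \bigoplus_{q\geq 0} \Env_{\mf{g}}(p+q) \otimes_{\Sigma_q\ltimes A^{\otimes q}} \big(j_!(V/W)\big)^{\otimes q},
\]
the tensor product being taken pointwise on $\cat{J}$. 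Since goodness propagates through the filtration by its stability under extensions, it is enough to show that each summand on the right-hand side is a good diagram in $\dgMod_{A, A^{\otimes p}}$.

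Here the homotopy sifted hypothesis is essential: the diagonal $\cat{J}\rt\cat{J}^{\times(q+1)}$ is homotopy cofinal, so the pointwise tensor product over $\cat{J}$ of the good diagram $\Env_{\mf{g}}(p+q)$ with $q$ copies of the projectively cofibrant diagram $j_!(V/W)$ has the correct derived behavior, namely its homotopy colimit agrees with the derived tensor product of the individual homotopy colimits. Taking $\Sigma_q$-coinvariants and tensoring further with $A^{\otimes q}$ preserve this property, since we work in characteristic zero and $V/W$ is cofibrant over $A$. This completes the cell attachment step, and goodness of $\mf{g}$ itself follows by transfinite induction over its cell decomposition.

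The main obstacle I anticipate is the careful verification of the commutation of sifted homotopy colimits with the nonlinear operations $V\mapsto \Env_{\mf{g}}\circ_A V$ of Lemma \ref{la:lem:propertiesofenveloping}(3); this is the unique nontrivial homotopical input and the only step at which the hypothesis on $\cat{J}$ is used, all other ingredients being of purely algebraic nature packaged into Proposition \ref{la:prop:reductionto0map}.
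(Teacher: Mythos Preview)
Your approach is the same as the paper's: cell induction via Proposition~\ref{la:prop:reductionto0map}, then the sifted hypothesis to handle the associated graded. There is one genuine gap, however. You claim that ``goodness propagates through the filtration by its stability under extensions,'' but the relevant criterion (part (8) of Lemma~\ref{la:lem:stabilitypropertiesofgoodmaps}) requires not only that the domain and cokernel be good, but also that the induced map $\colim X\rt\colim Y$ be a \emph{monomorphism}. Proposition~\ref{la:prop:reductionto0map} applied objectwise gives you pointwise injections, but says nothing about the map on colimits. The paper fills this by observing that $\colim_{\cat{J}}\mf{g}$ is itself a cofibrant $L_\infty$-algebroid (hence satisfies condition~($\star$)), so Proposition~\ref{la:prop:reductionto0map} applies directly to the pushout $\colim\mf{g}\rt\colim\mf{g}'$ and yields the same filtration on $\Env_{\colim\mf{g}'}(p)$, which by Lemma~\ref{la:lem:propertiesofenveloping}(4) coincides with the colimit of the pointwise filtration. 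Without this step your inductive passage through the filtration does not go through.

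A minor related imprecision: Proposition~\ref{la:prop:reductionto0map} only gives a filtration by \emph{injections}, not by levelwise cofibrations in $\dgMod_{A,A^{\otimes p}}$; cofibrancy of the cokernels is established only after you prove the associated graded is good, so the logic should be ordered accordingly (as in part (8) of Lemma~\ref{la:lem:stabilitypropertiesofgoodmaps}).
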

We will prove Theorem \ref{la:thm:monadicityrephrased} by `induction on cells', using the following simple stability properties of good maps:
\begin{lemma}\label{la:lem:stabilitypropertiesofgoodmaps}
Let $\cat{J}$ be a homotopy sifted category. We have the following properties of good maps between $\cat{J}$-indexed diagrams in $\cat{M}$:
\begin{enumerate}
 \item every projectively cofibrant diagram $X\colon \cat{J}\rt \cat{M}$ is good and every projective cofibration between projectively cofibrant diagrams in $\cat{M}$ is good.
 \item good morphisms are closed under transfinite composition and retracts.
 \item given a pushout diagram
 $$\xymatrix{
 X\ar[r]^f\ar[d] & Y\ar[d]\\
 X'\ar[r]_{f'} & Y'
 }$$
 in which $f$ is good and $X'$ is good, we have that $f'$ is good.
\item If $F\colon \cat{M}\rt \cat{N}$ is a left Quillen functor and $X\rt Y$ is a good map of diagrams in $\cat{M}$, then $F(X)\rt F(Y)$ is good. 
\item If $F\colon \cat{M}_1\times\cat{M}_2\rt \cat{N}$ is a left Quillen bifunctor, $X_1\rt Y_1$ is a good map of $\cat{J}_1$-diagrams in $\cat{M}_1$ and $X_2\rt Y_2$ is a good map of $\cat{J}_2$-diagrams in $\cat{M}_2$, then $F(X_1, X_2)\rt F(Y_1, Y_2)$ is a good map of $\cat{J}_1\times\cat{J}_2$-diagrams.
 \item If $X\colon \cat{J}^{\times n}\rt \cat{M}$ is good, then the restriction along the diagonal $\Del^*X\colon \cat{J}\rt \cat{M}$ is good.
\end{enumerate}
When $\cat{M}=\dgMod_{A, A^{\otimes p}}$ is the model category of dg-$A$-$A^{\otimes p}$-bimodules, we furthermore have:
\begin{enumerate}
 \item[(7)] If $X$ is a good diagram with a $\Sigma_n$-action, then $X/\Sigma_n$ is good.

 \item[(8)] Let $f\colon X\rt Y$ be a natural monomorphism with a good domain and a good cokernel, such that the map $\colim X\rt \colim Y$ is a monomorphism. Then $f$ is good.
\end{enumerate}
\end{lemma}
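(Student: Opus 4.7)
The plan is to verify the eight clauses in turn, using two standing facts: the colimit functor $\colim\colon \Fun(\cat{J}, \cat{M})\rt \cat{M}$ is left Quillen for the projective (semi-)model structure, with left derived functor $\hocolim$; and each of the three conditions defining a good diagram is stable under the relevant categorical constructions, provided one keeps track of cofibrancy carefully.

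Items (1)--(5) are essentially formal. For (1), a projectively cofibrant $X$ is pointwise cofibrant and has $\colim X$ cofibrant since $\colim$ is left Quillen, while $\hocolim X\rt \colim X$ is a weak equivalence by the definition of $\hocolim$; a projective cofibration between projectively cofibrant diagrams is then a cofibration both pointwise and after $\colim$. Clause (2) follows from the stability of cofibrations under retracts and transfinite composition, together with the analogous stability of the comparison map $\hocolim\rt \colim$ (using left properness to ensure transfinite compositions of weak equivalences along cofibrations remain weak equivalences). For (3), the pushout can be expressed as a cube whose front face is a levelwise pushout along a levelwise cofibration and whose back face is a pushout of cofibrant objects along a cofibration after $\colim$; both faces are homotopy pushouts, and three of the four comparison maps being weak equivalences forces the fourth to be so. For (4), a left Quillen functor preserves cofibrations, cofibrant objects, and (via Ken Brown) weak equivalences between cofibrant objects, while commuting with colimits; apply this pointwise and to $\colim$. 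Item (5) is the analogous statement for a left Quillen bifunctor, using that external products of projectively cofibrant diagrams remain projectively cofibrant.

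The substantive step is (6), and this is the only place the siftedness hypothesis enters. By hypothesis $\Delta\colon \cat{J}\rt \cat{J}\times \cat{J}$ is homotopy cofinal; iterating, $\Delta\colon \cat{J}\rt \cat{J}^{\times n}$ is homotopy cofinal for every $n\geq 1$. A homotopy cofinal functor induces a weak equivalence on hocolims and an isomorphism on ordinary colimits (the latter because its slices are in particular non-empty and connected). Hence the three goodness conditions for $\Delta^*X$ follow at once from those for $X$: pointwise cofibrancy is clear, $\colim_\cat{J} \Delta^*X \cong \colim_{\cat{J}^{\times n}} X$ is cofibrant, and $\hocolim_\cat{J} \Delta^*X \simeq \hocolim_{\cat{J}^{\times n}} X \simeq \colim_{\cat{J}^{\times n}} X$.

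The last two clauses exploit specific features of $\dgMod_{A, A^{\otimes p}}$. For (7), the characteristic-zero hypothesis makes $k[\Sigma_n]$ semisimple, so the orbit functor $(-)/\Sigma_n$ is exact: it preserves cofibrations and quasi-isomorphisms pointwise, and commutes with $\colim$, so goodness transfers immediately. For (8), a pointwise monomorphism with cofibrant cokernel splits as a graded map and is thus a cofibration of dg-modules; the analogous statement holds after $\colim$ by the hypothesis that $\colim X\rt \colim Y$ is a monomorphism. The short exact sequence $0\rt X\rt Y\rt C\rt 0$ then induces fiber sequences at both the pointwise and colimit-level homotopy colimits in the stable category $\dgMod_{A, A^{\otimes p}}$, and two-out-of-three applied to the three associated $\hocolim\rt \colim$ comparisons gives the last condition. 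The main obstacle is clause (6), which carries all of the genuine homotopical content; (7)--(8) are technical but routine once one has fixed the definition of cofibration in the transferred structure.
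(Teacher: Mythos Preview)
Your overall architecture matches the paper's proof closely, and your treatments of (6), (7) and (8) are essentially identical to what the paper does. The paper likewise dismisses (1)--(4) as ``easily verified.''

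The one genuine gap is in (5), which you dispose of in a single line. Your justification---``external products of projectively cofibrant diagrams remain projectively cofibrant''---does not apply, because the hypotheses of (5) only assume the $X_i$ and $Y_i$ are \emph{good}, not projectively cofibrant; goodness does not imply projective cofibrancy (the implication (1) establishes runs the other way). The paper's proof of (5) is in fact the longest part of the argument and does real work: one has to check by hand that $\colim_{\cat{J}_1\times\cat{J}_2} F(X_1,X_2)\cong F(\colim X_1,\colim X_2)$ is cofibrant, that the map on colimits is a cofibration, and most importantly that the comparison $\hocolim_{\cat{J}_1\times\cat{J}_2} F(X_1,X_2)\to \colim_{\cat{J}_1\times\cat{J}_2} F(X_1,X_2)$ is an equivalence. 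The last point is handled by decomposing the hocolim as an iterated hocolim and using that $F(-,c)$ and $F(c,-)$ are left Quillen for $c$ cofibrant. None of this is hard, but your one-line reduction does not supply it, and your remark that (6) ``carries all of the genuine homotopical content'' is misleading on this point.

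A minor quibble: in (2) you invoke left properness, which is neither assumed nor needed. Transfinite compositions of cofibrations between cofibrant objects are already homotopy colimits without any properness hypothesis, and that is all one uses.
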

\begin{proof} The first four properties are easily verified. For (5), it is clear that the map $F(X_1, X_2)\rt F(X_1, Y_2)\rt F(Y_1, Y_2)$ is a pointwise cofibration. The map on colimits
$$\xymatrix{
\colim_{\cat{J}_1\times\cat{J}_2} F(X_1, X_2) \ar[r] & \colim_{\cat{J}_1\times\cat{J}_2} F(Y_1\times Y_2)
}$$
is a cofibration between cofibrant objects. Indeed, it is the image under $F$ of the maps
$$\xymatrix{
\colim X_1\ar[r] & \colim Y_1 & \colim X_2\ar[r] & \colim Y_2
}$$
which are both cofibrations between cofibrant objects. To see that $F(X_1, X_2)$ is good, note that there are weak equivalences between cofibrant objects
\begin{align*}
\hocolim_{\cat{J}_1\times\cat{J}_2} F(X_1, X_2) &\simeq \hocolim_{\cat{J}_1} F\big(X_1, \hocolim_{\cat{J}_2}(X_2)\big)\\
&\simeq F\big(\hocolim_{\cat{J}_1}(X_1), \hocolim_{\cat{J}_2}(X_2)\big)\\
&\simeq F\big(\colim_{\cat{J}_1}(X_1), \colim_{\cat{J}_2}(X_2)\big)\cong \colim_{\cat{J}_1\times\cat{J}_2} F(X_1, X_2).
\end{align*}
For (6), clearly $\Del^*X$ is pointwise cofibrant. Since $\Del$ is homotopy cofinal, the (homotopy) colimit of $\Del^*X$ agrees with the (homotopy) colimit of $X$, which proves conditions (ii) and (iii). Assertion (7) follows from the fact that we are working in characteristic zero, so that the colimit functor $\colim\colon \cat{M}^{\Sigma_n}\rt \cat{M}$ is left Quillen for the \emph{injective} model structure. 

Finally, for (8) we use that a map of dg-$A$-$A^{\otimes p}$-bimodules is a cofibration if and only if it is a monomorphism with cofibrant cokernel. This implies that the maps $X(i)\rt Y(i)$ and $\colim X\rt \colim Y$ are cofibrations, so that all $Y(i)$ and $\colim Y$ are cofibrant. Furthermore, we obtain a commuting diagram
$$\xymatrix@R=1.7pc@C=1.9pc{
\hocolim X\ar[d]_\sim \ar[r] & \hocolim Y\ar[r] \ar[d] & \hocolim Y/X\ar[d]^\sim\\
\colim X\ar[r] & \colim Y \ar[r] & \colim Y/X
}$$
Both horizontal sequences are cofibre sequences of dg-$A$-modules, so the map $\hocolim Y\rt\colim Y$ is a weak equivalence.
\end{proof}
\begin{lemma}\label{lem:attachingcelltodiagram}
Let $\cat{J}$ be a homotopy sifted category and consider a pushout square
$$\xymatrix@R=1.6pc@C=1.5pc{
F(W)\ar[rr]^-{\mm{Free}(i)}\ar[d] & & F(V)\ar[d]\\
\mf{g}\ar[rr] & & \mf{h}
}$$
where $i$ is a projective cofibration of $\cat{J}$-diagrams of dg-$A$-modules and $\mf{g}$ is a projectively cofibrant diagram of $L_\infty$-algebroids. If each $\mm{Env}_\mf{g}(p)$ is good, then each $\mm{Env}_\mf{g}(p)\rt \mm{Env}_{\mf{h}}(p)$ is good.
\end{lemma}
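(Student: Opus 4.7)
The plan is to reduce the problem to a single cell attachment and then exploit the filtration on $\Env_{\mf{h}}$ provided by Proposition~\ref{la:prop:reductionto0map}. Any projective cofibration $i\colon W\rt V$ of $\cat{J}$-diagrams of dg-$A$-modules over $T_A$ can be written as a retract of a transfinite composition of pushouts of generating projective cofibrations of the form $\cat{J}(j,-)\otimes(A[n]\rt A[n,n+1])$ (over $T_A$). The induced presentation of $\mf{g}\rt \mf{h}$ as a retract of a transfinite composition $\mf{g}=\mf{h}_0\rt \mf{h}_1\rt\cdots\rt \mf{h}$ of single-cell attachments, together with the fact that $\Env_{(-)}(p)$ preserves filtered colimits and retracts (Lemma~\ref{la:lem:propertiesofenveloping}(1)) and that good maps are closed under the same operations (Lemma~\ref{la:lem:stabilitypropertiesofgoodmaps}(2)), reduces the problem to the case where $i$ is a single generating projective cofibration.

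For such an $i$, the cokernel $V/W$ is pointwise graded-free, and the projectively cofibrant algebroid $\mf{g}$ satisfies condition~($\star$) at each index $j\in\cat{J}$. Applying Proposition~\ref{la:prop:reductionto0map} naturally in $j$, one obtains a functorial filtration
$$
\Env_\mf{g}(p)=\Env_\mf{h}^{(0)}(p)\hookrightarrow \Env_\mf{h}^{(1)}(p)\hookrightarrow\cdots\rt \Env_\mf{h}(p)
$$
of $\cat{J}$-diagrams of symmetric $A$-bimodules by natural monomorphisms with colimit $\Env_\mf{h}(p)$, whose $q$-th associated graded is isomorphic to $\Env_\mf{g}(q+p)\otimes_{\Sigma_q\ltimes A^{\otimes q}} (V/W)^{\otimes q}$. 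The diagram $V/W$ is projectively cofibrant, hence good by Lemma~\ref{la:lem:stabilitypropertiesofgoodmaps}(1); iterated application of parts~(5) and~(6) of Lemma~\ref{la:lem:stabilitypropertiesofgoodmaps} then shows that $(V/W)^{\otimes q}$ is a good $\cat{J}$-diagram with $\Sigma_q$-action. Combining this with the goodness hypothesis on $\Env_\mf{g}(q+p)$ via the left Quillen bifunctor $\otimes_{A^{\otimes q}}$ and then taking $\Sigma_q$-coinvariants using Lemma~\ref{la:lem:stabilitypropertiesofgoodmaps}(7) yields that each graded piece is good.

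Each inclusion $\Env_\mf{h}^{(q)}(p)\hookrightarrow \Env_\mf{h}^{(q+1)}(p)$ is then a natural monomorphism of good diagrams with good cokernel, and the induced map on colimits over $\cat{J}$ is again a monomorphism since filtered colimits preserve the subobject structure of the filtration. Part~(8) of Lemma~\ref{la:lem:stabilitypropertiesofgoodmaps} shows that each step is a good map; closure under transfinite composition (part~(2)) then gives that the total map $\Env_\mf{g}(p)\rt \Env_\mf{h}(p)$ is good. The principal technical point lies in the middle paragraph: one must verify carefully that the tensor-and-coinvariant construction combining $\Env_\mf{g}(q+p)$ with $(V/W)^{\otimes q}$ fits into the framework of good diagrams as a left Quillen bifunctor on the appropriate categories of symmetric $A$-bimodules, with the characteristic zero hypothesis being essential for handling the $\Sigma_q$-coinvariants.
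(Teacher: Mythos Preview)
Your overall strategy matches the paper's: filter $\Env_{\mf{h}}(p)$ via Proposition~\ref{la:prop:reductionto0map}, check that the associated graded pieces are good using the bifunctor and coinvariant parts of Lemma~\ref{la:lem:stabilitypropertiesofgoodmaps}, and then climb the filtration using parts~(2) and~(8). Your opening reduction to a single generating projective cofibration is an extra step not present in the paper --- the paper applies Proposition~\ref{la:prop:reductionto0map} directly to an arbitrary projective cofibration~$i$ --- but your reduction is harmless and has the minor virtue of making the graded-freeness hypothesis on the cokernel transparently satisfied.

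There is, however, a genuine gap in your third paragraph. To invoke part~(8) of Lemma~\ref{la:lem:stabilitypropertiesofgoodmaps} you must know that
\[
\colim_{\cat{J}}\Env_{\mf{h}}^{(q)}(p)\rt \colim_{\cat{J}}\Env_{\mf{h}}^{(q+1)}(p)
\]
is a monomorphism. Your justification, that ``filtered colimits preserve the subobject structure of the filtration'', does not apply: the colimit here is over the \emph{sifted} category~$\cat{J}$, and sifted colimits (for instance reflexive coequalizers) do not preserve monomorphisms in general. The paper's argument at this point is different and is the key input you are missing: since $\mf{g}$ is projectively cofibrant, the colimit $\colim_{\cat{J}}\mf{g}$ is itself a cofibrant $L_\infty$-algebroid and hence satisfies condition~($\star$). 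Proposition~\ref{la:prop:reductionto0map} therefore applies directly to the pushout $\colim_{\cat{J}}\mf{h}$, producing a filtration on $\Env_{\colim\mf{h}}(p)\cong\colim_{\cat{J}}\Env_{\mf{h}}(p)$ by honest monomorphisms. By naturality of the weakly-filtered construction and Lemma~\ref{la:lem:propertiesofenveloping}(4), this filtration coincides with the one obtained by applying $\colim_{\cat{J}}$ to the pointwise filtration, which is exactly what is needed.
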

\begin{proof}
Note that each $L_\infty$-algebroid $\mf{g}(j)$ is cofibrant, so that it satisfies condition ($\star$) of Proposition \ref{la:prop:reductionto0map}. It follows that there is a natural filtration 
$$\xymatrix{
\Env_\mf{g}(p)=\Env_{\mf{h}}(p)^{(0)}\ar[r] & \Env_{\mf{h}}(p)^{(1)}\ar[r] & \cdots\ar[r] & \mm{Env}_{\mf{h}}(p)
}$$
on the $\cat{J}$-diagram $\Env_\mf{h}(p)$. There is a similar filtration on the colimit, because $\colim(\mf{g})$ is a cofibrant $L_\infty$-algebroid as well. By parts (2) and (8) of Lemma \ref{la:lem:stabilitypropertiesofgoodmaps}, it then suffices to verify that the associated graded
\begin{equation}\label{diag:diagofgraded}
\bigoplus_{q\geq 0} \Env_{\mf{g}}(p+q)\otimes_{A^{\otimes q}} (V/W)^{\otimes q}\Big/\Sigma_q
\end{equation}
consists of good $\cat{J}$-diagrams of dg-$A$-$A^{\otimes p}$-bimodules. 

Let us abbreviate the $\cat{J}$-diagram of $A$-$A^{\otimes p+q}$-bimodules $\Env_{\mf{g}}(p+q)$ to just $E$. To show that the above sum consists of good $\cat{J}$-diagrams, consider the functor
$$\vcenter{\xymatrix{
T\colon \cat{J}^{\times 1+q}\ar[r] & \dgMod_{A, A^{\otimes p}}; \hspace{4pt} (i, j_1, \dots, j_q) \ar@{|->}[r] & E_i\otimes_{A^{\otimes q}}(V/W_{j_1}\otimes \dots V/W_{j_q}).
}}$$
We claim that $T$ is a good diagram of dg-$A$-$A^{\otimes p}$-bimodules. To see this, note that the functor
$$\xymatrix{
\dgMod_{A, A^{\otimes r}}\times \dgMod_{A}\ar[r] & \dgMod_{A, A^{\otimes r-1}}; \hspace{4pt} (E, V)\ar@{|->}[r] & E\otimes_{A} V
}$$
is a left Quillen bifunctor. Since $E$ is a good diagram of $A$-$A^{\otimes p+q}$-bimodules and $V/W$ is a good (projectively cofibrant) diagram of dg-$A$-modules, a repeated application of part (5) of Lemma \ref{la:lem:stabilitypropertiesofgoodmaps} shows that the functor $T$ is good.

Restricting $T$ along the diagonal and taking the quotient by the action of the symmetric group, we obtain a good $\cat{J}$-diagram of dg-$A$-modules by parts (6) and (7) of Lemma \ref{la:lem:stabilitypropertiesofgoodmaps}. This means that the associated graded \eqref{diag:diagofgraded} is a sum of good diagrams, which proves the result. 
\end{proof}
\begin{proof}[Proof (of Theorem \ref{la:thm:monadicityrephrased})]
Let $\mc{K}$ be the subcategory of $\mf{g}\colon \cat{J}\rt \dgLooAlgd_A$ for which each diagram of dg-$A$-modules $\Env_\mf{g}(p)$ is good. By part (2) of Lemma \ref{la:lem:stabilitypropertiesofgoodmaps}, $\mc{K}$ is closed under retracts and contains the colimit of a transfinite sequence $\mf{g}_\sbullet$ for which each $\mm{Env}_{\mf{g}_{\alpha}}(p)\rt \Env_{\mf{g}_\beta}(p)$ is good. It therefore suffices to show that $\mc{K}$ is closed under pushouts along generating cofibrations, which is Lemma \ref{lem:attachingcelltodiagram}.
\end{proof}
\begin{remark}\label{rem:fortame}
The above proofs apply verbatim in situations where the base monoidal model category $\dgMod_k$ of chain complexes (over a field $k$ of characteristic zero) is replaced by the monoidal model category of graded-mixed complexes or presheaves of complexes over $k$ (Variants \ref{var:gradedmixed} and \ref{var:presheaves}). In both cases, one can directly transfer the model structure along the forgetful functor to (graded mixed, presheaves of) chain complexes over $T_A$, using the filtration of Proposition \ref{la:prop:reductionto0map} (at each object in the site).

One can also obtain a semi-model structure on $L_\infty$-algebroids by transfer from the \emph{tame} model structure on $\dgMod_A$, which itself is not transferred from $\dgMod_k$ (see Variant \ref{var:tame}). Indeed, the tame model structure on $\dgMod_A$ is a monoidal model structure, which has the same trivial cofibrations as the projective model structure and whose cofibrations are generated by inclusions $V\rt V[0, 1]$ of graded-free dg-$A$-modules into their cone \cite[Lemma 3.4]{nui17c}. It follows that all cofibrant $L_\infty$-algebroids still satisfy condition ($\star$) from Proposition \ref{la:prop:reductionto0map}. 

The above proofs apply now apply verbatim to this case as well. Let us remark that in Theorem \ref{la:thm:monadicityrephrased}, one uses the model structure on dg-bimodules transferred from the tame model structure on left dg-$A$-modules, which is different from the tame model structure on $A$-$A^{\otimes p}$-bimodules.
\end{remark}

\section{Cofibrant replacement}\label{la:sec:cofibrantreplacement}
There is no straightforward way to replace a dg-Lie algebroid or $L_\infty$-algebroid by a fibrant dg-Lie algebroid; this is the main reason for the non-existence of an actual model structure on dg-Lie algebroids. The purpose of this section is to provide a reasonably concrete \emph{cofibrant} replacement for dg-Lie algebroids and $L_\infty$-algebroids, which is analogous to the cobar resolution for algebras over reduced operads. 

Let us start by briefly recalling the relation between $L_\infty$-algebras and cocommutative dg-coalgebras. All cocommutative coalgebras are assumed to be without counit and conilpotent (every element is annihilated by some $n$-fold composite of the comultiplication). For any cocommutative coalgebra $C$ and an $L_\infty$-algebra $\mf{h}$, the chain complex $\Hom(C, \mf{h})$ has the structure of an $L_\infty$-algebra, with differential given by $\dau\tau= \dau_\mf{h}\circ \tau - \tau\circ \dau_{C}$ and $n$-ary bracket given by composing the $n$-ary bracket in $\mf{h}$ with the $n$-fold comultiplication in $C$. A \emph{twisting cochain} is a Maurer-Cartan element of this $L_\infty$-algebra, i.e.\ a map $C\rt \mf{h}[1]$ which satisfies the Maurer-Cartan equation
\begin{equation}\label{la:eq:maurercartan}
\dau\tau + \sum_{j\geq 2} \frac{1}{j!} [\tau, \dots , \tau]_j =0.
\end{equation}
The infinite sum is well-defined because $C$ is conilpotent. There are natural bijections
\begin{equation}\label{diag:twistinggivesadj}
\Hom_{\dgLooAlg_k}(\mm{\Omega} C, \mf{g}) \cong \mm{Twist}(C, \mf{g})\cong \Hom_{\cat{CoAlg}_k^{\dg}}\big(C, \ol{C}_*(\mf{g})\big)
\end{equation}
between the set of twisting cochains $C\rt \mf{g}[1]$, the set of maps of $L_\infty$-algebras $\mm{\Omega} C\rt \mf{g}$ from the \emph{cobar} construction of $C$ and the set of maps of cocommutative coalgebras $C\rt \ol{C}_*(\mf{g})$ to the \emph{reduced (homological) Chevalley-Eilenberg complex} of $\mf{g}$. The latter is the cofree (conilpotent, non-counital) graded-cocommutative coalgebra
\begin{equation}\label{eq:coprod}
\ol{C}_*(\mf{g}):= \mm{Sym}^{\sgeq 1}_k \mf{g}[1] \hspace{35pt} \Delta(x_1\, .\, .\, x_n) = \sum_{i, \sigma} x_{\sigma(1)}\, .\, .\, x_{\sigma(i)} \otimes x_{\sigma(i+1)}\, .\, .\, x_{\sigma(n)}
\end{equation}
where the sum runs over all $(i, k-i)$-unshuffles, endowed with the unique differential extending the map
$$\xymatrix{
\sum_{n\sgeq 1} [-, \dots, -]_n\colon \mm{Sym}^{\sgeq 1}_k \mf{g}[1]\ar[r] & \mf{g}[1].
}$$
The natural isomorphisms \eqref{diag:twistinggivesadj} realise the cobar functor $\mm{\Omega}$ as a left adjoint to $\ol{C}_*$.
\begin{definition}[{cf.\ \cite[Section 13.2.12]{lod12}}]\label{la:def:nonlinearmaplooalgebras}
An \emph{$\infty$-morphism} of $L_\infty$-algebras $\mf{g}\leadsto \mf{h}$ (also called an \emph{$L_\infty$-morphism}) is a twisting cochain 
$$
\ol{C}_*(\mf{g})\rt \mf{h}[1],
$$
or equivalently, a map of cocommutative dg-coalgebras $\ol{C}_*(\mf{g})\rt \ol{C}_*(\mf{h})$.
\end{definition}
\begin{definition}\label{la:def:nonlinearmap}
Let $\mf{g}$ and $\mf{h}$ be $L_\infty$-algebroids over $A$. An \emph{$\infty$-morphism} $\mf{g}\leadsto \mf{h}$ of $L_\infty$-algebroids is an $\infty$-morphism of $L_\infty$-algebras $\tau\colon\mf{g}\leadsto \mf{h}$, such that
\begin{itemize}\setlength{\itemsep}{2pt}
\item[(i)] the composite map $\rho_\mf{h}(\tau)\colon \ol{C}_*(\mf{g})\rt \mf{h}[1]\rt T_A[1]$ first takes the quotient by $\mm{Sym}^{\sgeq 2}_k\mf{g}[1]\subseteq \ol{C}_*(\mf{g})$ and then applies the anchor of $\mf{g}$ to the remaining $\mf{g}[1]$.
\item[(ii)] the map of graded vector spaces $\tau\colon \mm{Sym}^{\sgeq 1}_k(\mf{g}[1])\rt \mf{h}[1]$ descends to a graded $A$-linear map $\mm{Sym}_A^{\sgeq 1}(\mf{g}[1])\rt \mf{h}[1]$.
\end{itemize}
Let $\LooAlgd_A^{(\infty)}$ denote the category of $L_\infty$-algebroids and $\infty$-morphisms between them.
\end{definition}
\begin{remark}
The category of $L_\infty$-algebras and $\infty$-morphisms between them is a full subcategory of the category of cocommutative dg-coalgebras (on the fibrant objects in the model structure from \cite{hin98}). We do not know if $\LooAlgd_A^{(\infty)}$ can be embedded into such a category of coalgebraic objects.
\end{remark}
\begin{remark}
$\infty$-morphisms between $L_\infty$-algebroids frequently arise in the setting of Remark \ref{rem:finiterank}, where $A$ is an ordinary ring and the $L_\infty$-algebroids considered are nonnegatively graded complexes of finite rank projective $A$-modules. In this case, $\infty$-morphisms $\mf{g}\leadsto \mf{h}$ can be identified with maps of cdgas $\mm{Sym}_A(\mf{h}[1])^\vee\rt \mm{Sym}_A(\mf{g}[1])^\vee$, also known as `maps of NQ-supermanifolds'. For example, they naturally appear when studying homotopy transfer of $L_\infty$-algebroid structures \cite{pym16}.
\end{remark}
Every strict morphism between $L_\infty$-algebroids defines an $\infty$-morphism, using the functoriality of $\ol{C}_*(\mf{g})$ in maps of $L_\infty$-algebras. We therefore obtain a functor 
$$
\iota\colon \dgLooAlgd_A\rt \LooAlgd_A^{(\infty)}
$$
which is the identity on objects.
\begin{lemma}
The functor $\iota$ admits a left adjoint $Q\colon \LooAlgd_A^{(\infty)}\rt \dgLooAlgd_A$.
\end{lemma}
\begin{proof}
For each $\mf{g}\in\LooAlgd_A^{(\infty)}$, it suffices to prove that the functor
$$\xymatrix{
\dgLooAlgd_A\ar[r] & \Set; \hspace{4pt} \mf{h}\ar@{|->}[r]&  \big\{\infty\text{-morphisms } \mf{g}\leadsto \mf{h}\big\}
}$$
can be corepresented by an $L_\infty$-algebroid $Q(\mf{g})$. This functor sends limits of $L_\infty$-algebroids (and strict maps between them) to limits of sets, and similarly for $\kappa$-filtered colimits where $\kappa$ exceeds the cardinality of $\mf{g}$. The existence of a corepresenting object $Q(\mf{g})$ then follows from the following version of the adjoint functor theorem, sometimes called `the representability theorem': any accessible, limit-preserving functor $F\colon\cat{C}\rt \Set$ out of a locally presentable category is corepresentable (see e.g.\ \cite[Proposition 5.5.2.7]{lur09}, whose proof also applies to locally presentable categories instead of $\infty$-categories).
\end{proof}
Our goal will be to show that $Q(\mf{g})$ often provides a cofibrant replacement for the $L_\infty$-algebroid $\mf{g}$, see Proposition \ref{la:prop:cobariscofibrant} and Proposition \ref{la:prop:propertiesofcobar}(c). To this end, let us first give a somewhat more explicit description of the $L_\infty$-algebroid $Q(\mf{g})$.
\begin{construction}
Let $\mf{g}$ be an $L_\infty$-algebroid and consider the maps of graded vector spaces
$$\xymatrix{
\mm{Sym}^{\sgeq 1}_k\big(\mf{g}[1]\big)[-1]\ar[r] & \mm{Sym}^{\sgeq 1}_A\big(\mf{g}[1]\big)[-1]\ar[r]^-{\mm{proj}} & \mf{g} \ar[r]^\rho & T_A.
}$$
The last two maps are graded $A$-linear. Working without differentials, we can take free $L_\infty$-algebroids and obtain a surjective map
\begin{equation}\label{diag:quot}\xymatrix{
q\colon A\otimes \mm{\Omega}\ol{C}_*(\mf{g})\cong\mm{Free}\Big(\mm{Sym}^{\sgeq 1}_k\big(\mf{g}[1]\big)[-1]\Big)\ar@{->>}[r] & F\Big(\mm{Sym}^{\sgeq 1}_A(\mf{g}[1])[-1]\Big).
}\end{equation}
As indicated, the domain of this map is isomorphic to the action $L_\infty$-algebroid associated to the operadic bar-cobar resolution $\mm{\Omega}\ol{C}_*(\mf{g})\rt \mf{g}\rt T_A$ of the $k$-linear $L_\infty$-algebra underlying $\mf{g}$. In particular, it comes equipped with a differential.

Unwinding the definition of the differential on the bar-cobar construction (see e.g.\ \cite[Section 11.2.2, 11.2.8]{lod12}), one sees that on generators $x_1\dots x_n\in \mm{Sym}^{\sgeq 1}_k\big(\mf{g}[1]\big)[-1]$, the differential takes the form
\begin{equation}\label{eq:totaldiff}
\dau\big(x_1\dots x_n\big) = \dau_\mm{lin}(x_1\dots x_n) + \kappa(x_1\dots x_n).
\end{equation}
The first term $\dau_\mm{lin}(x_1\dots x_n) = \sum x_1\dots \dau_\mf{g}(x_i)\dots x_n$ is the differential induced from the differential on $\mf{g}$. The second term is given (modulo Koszul signs) by
\begin{equation}\label{eq:connectinghom}
\kappa\big(x_1\dots x_k\big) = -\sum_{i\geq 2}\sum_{\sigma} [x_{\sigma(1)}, \dots, x_{\sigma(i)}]_{\mf{g}} \dots x_{\sigma(k)}+ \sum_{j\geq 2}\frac{1}{j!} [\Del^{(j)}(x_1\dots x_k)].
\end{equation}
Here $\sigma$ runs over the $(i, k-i)$-unshuffles. The bracket $[-, \dots, -]_\mf{g}$ denotes the bracket of $\mf{g}$, while $[-,\dots, -]$ denotes the (formal) bracket in $A\otimes \mm{\Omega}\ol{C}_*(\mf{g})$. Finally, $\Del^{(j)}$ denotes the $j$-fold comultiplication on $\mm{Sym}^{\sgeq 1}_k(\mf{g}[1])$.

To see equation \eqref{eq:connectinghom}, recall that the differential on $\mm{\Omega}\ol{C}_*(\mf{g})$ is the sum of the differential induced from $\ol{C}_*(\mf{g})[-1]$, and a second differential built from the comultiplication of $\ol{C}_*(\mf{g})$ and the brackets in $\mm{\Omega}\ol{C}_*(\mf{g})$. This second differential is exactly the second term of $\kappa$. Similarly, the differential on $\ol{C}_*(\mf{g})[-1]$ is build out of the differential on $\mf{g}$ and the comultiplication on $\ol{C}_*(\mf{g})$. The first contributes $\dau_\mm{lin}$ to the differential and the second contributes the first term of $\kappa$ \eqref{eq:connectinghom} (the minus sign arises because we pass from $\ol{C}_*(\mf{g})$ to its desuspension).
\end{construction}
\begin{lemma}\label{lem:Qdescribed}
The differential \eqref{eq:totaldiff} passes to the quotient $F\big(\mm{Sym}^{\sgeq 1}_A\big(\mf{g}[1]\big)[-1]\big)$. Consequently, we have that
$$
Q(\mf{g}) \cong \bigg(F\Big(\mm{Sym}^{\sgeq 1}_A\big(\mf{g}[1]\big)[-1]\Big), \dau= \dau_{\mm{lin}}+\kappa\bigg).
$$
\end{lemma}
\begin{proof}
For the second part of the statement, it suffices to check the universal property. To this end, note that maps of $L_\infty$-algebroids without differentials
$$\xymatrix{
f_\tau\colon F\Big(\mm{Sym}^{\sgeq 1}_A(\mf{g}[1])[-1]\Big)\ar[r] & \mf{h}
}$$ 
correspond to bijectively to graded $A$-linear maps $\tau\colon \mm{Sym}_A^{\sgeq 1}\mf{g}[1]\rt \mf{h}[1]$ such that the composite map $\mm{Sym}_A^{\sgeq 1}\mf{g}[1]\rt T_A[1]$ first projects to $\mf{g}[1]$ and then applies the anchor. The map $f_\tau$ preserves differentials iff its restriction to $A\otimes \mm{\Omega}\ol{C}_*(\mf{g})$ preserves differentials, i.e.\ iff the composite $\ol{C}_*(\mf{g})\rt \mm{Sym}^{\sgeq 1}_A\mf{g}[1]\rt \mf{h}[1]$ is a twisting cochain. It follows that such maps $f_\tau$ correspond precisely to $\infty$-morphisms $\mf{g}\leadsto \mf{h}$.

For the first part, comparing universal properties shows that without the differential, $F\big(\mm{Sym}^{\sgeq 1}_A\big(\mf{g}[1]\big)[-1]\big)$ is given by the quotient
$$
\mm{Free}\Big(\mm{Sym}^{\sgeq 1}_k\big(\mf{g}[1]\big)[-1]\Big)\bigg/\Big(1\otimes x_1\dots (ax_i)\dots x_k - a\otimes x_1\dots x_k\Big)
$$
by the graded ideal generated by $a\otimes x_1\dots x_k - 1\otimes x_1\dots (ax_i)\dots x_k$ (see Example \ref{ex:ideal}). It suffices to check that this ideal is also closed under the differential \eqref{eq:totaldiff}. Unravelling the definitions, one sees that it is enough to check that the following two equations hold in $F\big(\mm{Sym}^{\sgeq 1}_A\big(\mf{g}[1]\big)[-1]\big)$, ignoring Koszul signs:
\begin{align}\label{eq:kappaisalinear}
\dau_{\mm{lin}}(x_1\dots (ax_i)\dots x_k) - a\cdot \dau_\mm{lin}(x_1\dots x_n) &= \dau_A(a)\cdot x_1\dots x_n\nonumber \\
\kappa\big(x_1\dots (ax_i)\dots x_k\big)-a\cdot \kappa\big(x_1 \dots x_k\big)&=0.
\end{align}
The first equation is easily verified. The second equation asserts that $\kappa$ depends $A$-multilinearly on the $x_i$. Since it is symmetric in the $x_i$ by assumption, it suffices to take $i=1$.

To see that $\kappa$ depends $A$-multilinear on the $x_i$, recall that all brackets of arity $\geq 3$ are $A$-multilinear. Consequently, most terms in \eqref{eq:connectinghom} are already $A$-multilinear, so that the left hand side of \eqref{eq:kappaisalinear} reduces to
\begin{align}\label{eq:connectinghomlinearity}
\kappa\big((ax_1)\dots x_k\big)-a\cdot \kappa\big(x_1 \dots x_k\big) &= \sum_{j>1} [ax_1, x_j] \dots x_k - a[x_1, x_j]\dots x_k\nonumber\\
&- \frac{1}{2} \big[\Del\big((ax_1)\dots x_k\big)\big]+ \frac{1}{2} a \big[\Del(x_1\dots x_k)\big]
\end{align}
Next, note that taking brackets with an element from $\big(\mm{Sym}_A^{\sgeq 2}\mf{g}[1]\big)[-1]$ is $A$-linear, since the latter space is contained in the kernel of the anchor map of $F\big(\mm{Sym}_A^{\sgeq 2}\mf{g}[1]\big)[-1]\big)$. It follows that the second line of \eqref{eq:connectinghomlinearity} reduces to
$$
 - \frac{1}{2} \big[\Del\big((ax_1)\dots x_k\big)\big] + \frac{1}{2} a [\Del(x_1\dots x_k)] = \sum_{j>1} [ax_1\, .\, .\, \hat{x}_j\, .\, .\, x_n, x_j] - a [x_1\, .\, .\, \hat{x}_j\,  .\, .\, x_n, x_j].
$$
The right hand side consists only of the contributions to coproduct \eqref{eq:coprod} from the $(1, k-1)$-unshuffles and $(k-1, 1)$-unshuffles. Applying the Leibniz rule, one then sees that the two lines in \eqref{eq:connectinghomlinearity} cancel out, so that $\kappa$ is indeed $A$-multilinear.
\end{proof}
\begin{definition}\label{la:def:acofibrantliealgd}
An $L_\infty$-algebroid $\mf{g}$ is \emph{$A$-cofibrant} if its underlying dg-$A$-module is cofibrant.
\end{definition}
\begin{proposition}\label{la:prop:cobariscofibrant}
If $\mf{g}$ is an $A$-cofibrant $L_\infty$-algebroid, then $Q(\mf{g})$ is cofibrant.
\end{proposition}
\begin{proof}
For any $L_\infty$-algebroid $\mf{g}$, let us define 
$$
Q^{(n)}(\mf{g})=F\Big(\mm{Sym}_A^{1\leq \bullet\leq n}(\mf{g}[1])[-1]\Big) \subseteq Q(\mf{g}).
$$
By formulas \eqref{eq:totaldiff} and \eqref{eq:connectinghom}, the differential on $Q(\mf{g})$ preserves $Q^{(n)}(\mf{g})$, so that $Q^{(n)}$ is indeed an $L_\infty$-algebroid. We therefore obtain a sequence of inclusions
\begin{equation}\label{la:diag:unionofcobars}\vcenter{\xymatrix{
Q^{(1)}(\mf{g})\ar[r] & Q^{(2)}(\mf{g})\ar[r] & \dots \ar[r] & Q(\mf{g})
}}\end{equation}
whose colimit is $Q(\mf{g})$. Note that $\kappa$ vanishes on generators from $\mf{g}\subseteq \mm{Sym}_A^{\sgeq 1}(\mf{g}[1])[-1]$. Consequently, $Q^{(1)}(\mf{g})$ is simply the free $L_\infty$-algebroid generated by the $A$-linear map $\rho\colon \mf{g}\rt T_A$; this is certainly cofibrant if $\mf{g}$ is $A$-cofibrant.

To prove the result, it suffices to verify that each map $Q^{(n)}(\mf{g})\rt Q^{(n+1)}(\mf{g})$ is a cofibration. To this end, observe that there is a pushout square of $L_\infty$-algebroids of the form
\begin{equation}\label{la:diag:attachingcellincobar}\vcenter{\xymatrix@R=1.8pc{
F\Big(\big(\mm{Sym}_A^{n+1}\mf{g}[1]\big)[-2]\Big)\ar[d]\ar[r]^-{\kappa} & Q^{(n)}(\mf{g})\ar[d]\\
F\Big(\big(\mm{Sym}_A^{n+1}\mf{g}[1]\big)[-2, -1]\Big)\ar[r] & Q^{(n+1)}(\mf{g}).
}}\end{equation}
Here the left two $L_\infty$-algebroids are freely generated by the twofold desuspension of the dg-$A$-module $\mm{Sym}_A^{n+1}\mf{g}[1]$ and its cone, both equipped with the zero anchor map. Indeed, unravelling the universal property of the pushout, one obtains the following description of the pushout of \eqref{la:diag:attachingcellincobar}. One freely adds generators from $\mm{Sym}_A^{n+1}(\mf{g}[1]\big)[-1]$ to $Q^{(n)}(\mf{g})$. The differential of a new generator $x_1\dots x_{n+1}$ is the sum
$$
\sum_{i} x_1\dots \dau_\mf{g}(x_i)\dots x_{n+1} + \kappa(x_1\dots x_{n+1}).
$$
The first term the `internal' differential in $\mm{Sym}_A^{n+1}(\mf{g}[1]\big)[-1]$ and the second term is the attaching map $\kappa$. More precisely, the above sum is obtained by taking the differential of $x_1\dots x_{n+1}$ in the cone $\big(\mm{Sym}_A^{n+1}\mf{g}[1]\big)[-2, -1]$ and applying the attaching map $\kappa$ to the part that is contained in $\big(\mm{Sym}_A^{n+1}\mf{g}[1]\big)[-2]$. The result is exactly $Q^{(n+1)}(\mf{g})$, together with its differential \eqref{eq:totaldiff}.

When $\mf{g}$ is $A$-cofibrant, $\mm{Sym}_A^{n+1}\mf{g}[1]$ is cofibrant as well, so that the left vertical map in \eqref{la:diag:attachingcellincobar} is a cofibration. It follows that each $Q^{(n)}(\mf{g})\rt Q^{(n+1)}(\mf{g})$ is a cofibration, so that the colimit $Q(\mf{g})$ is cofibrant. 
\end{proof}
\begin{remark}\label{rem:naturalityoffiltration}
Note that the filtration \eqref{la:diag:unionofcobars} on $Q(\mf{g})$ depends functorially on $\mf{g}$, with respect to the $\infty$-morphisms. Indeed, an $\infty$-morphism of $L_\infty$-algebroids $\mf{g}\leadsto \mf{h}$ induces a map of coalgebras (without differential) $\mm{Sym}_A^{\sgeq 1}\mf{g}[1]\rt \mm{Sym}_A^{\sgeq 1}\mf{h}[1]$, which induces the maps on the various stages of the filtration. 

Note that $\kappa$ decreases the filtration degree and hence vanishes on the associated graded of the filtration. Using this, it follows that the associated graded of the filtration \eqref{la:diag:unionofcobars} is given by
\begin{equation}\label{eq:gradedoffiltration}
\mm{gr}\Big(Q^{(\sbullet)}(\mf{g})\Big) \cong F\Big(\mm{Sym}_A^{\geq 1}(\mf{g}[1])[-1]\Big).
\end{equation}
Here the right hand side is endowed with the differential $\dau_\mm{lin}$ induced from the differential on $\mf{g}$, i.e.\ it is the free $L_\infty$-algebroid generated $\mm{Sym}_A^{\geq 1}(\mf{g}[1])[-1]$. Furthermore, its (weight) grading is inherited from the grading on $\mm{Sym}_A^{\geq 1}(\mf{g}[1])$ by polynomial degree, using the `free graded $L_\infty$-algebroid' functor from Section \ref{la:sec:technicalities}.
\end{remark}
\begin{proposition}\label{la:prop:propertiesofcobar}
The functor $Q\colon \LooAlgd_A^{(\infty)}\rt \dgLooAlgd_A$ enjoys the following properties:
\begin{itemize}
 \item[(a)] let $\mf{g}\leadsto \mf{h}$ be an $\infty$-morphism between $A$-cofibrant $L_\infty$-algebroids. If the linear part $\mf{g}\rt \mf{h}$ is a weak equivalence, then $Q(\mf{g})\rt Q(\mf{h})$ is a weak equivalence.
 \item[(b)] the composite functor from the category of $A$-cofibrant $L_\infty$-algebroids
 $$\smash{\xymatrix{
 Q\circ \iota\colon \LooAlgd_A^{\dg, \mm{A-cof}}\ar[r] & \LooAlgd_A^{\dg, \mm{cof}}
 }}$$
 is a relative functor that preserves $\Del^{\mm{op}}$-indexed homotopy colimits.
 \item[(c)] the counit map $Q(\mf{g})\rt \mf{g}$ is a weak equivalence whenever $\mf{g}$ is $A$-cofibrant.
\end{itemize}
\end{proposition}
\begin{proof}
For assertion (a), using the filtration \eqref{la:diag:unionofcobars} and Remark \ref{rem:naturalityoffiltration}, it suffices to check that the map on the associated graded is a weak equivalence. Note that the higher order components of the $\infty$-morphism $\mf{g}\leadsto \mf{h}$ decrease the filtration degree and hence induce the zero map on the associated graded. In light of the isomorphism \eqref{eq:gradedoffiltration}, it then suffices to verify that the linear part $\mf{g}\rt \mf{h}$ induces graded quasi-isomorphisms 
$$\xymatrix{
F\Big(\mm{Sym}_A^{\geq 1}(\mf{g}[1])[-1]\Big)\ar[r] & F\Big(\mm{Sym}_A^{\geq 1}(\mf{h}[1])[-1]\Big).
}$$
This follows because each $\mm{Sym}_A^{p}(\mf{g}[1])\rt \mm{Sym}_A^p(\mf{h}[1])$ is a quasi-isomorphism between cofibrant dg-$A$-modules when $\mf{g}\rt \mf{h}$ is a quasi-isomorphism between cofibrant dg-$A$-modules.

For part (b), note that the composite functor $Q\circ \iota$ preserves weak equivalences between $A$-cofibrant $L_\infty$-algebroids by part (a). Suppose that $\mf{g}_\sbullet\colon \Del^\mm{op}\rt \dgLooAlgd_A$ is a projectively cofibrant diagram. In particular, each $\mf{g}_i$ is a cofibrant $L_\infty$-algebroid and hence $A$-cofibrant by Theorem \ref{la:thm:monadicity}(a). We have to show that the natural map
$$\xymatrix{
\hocolim\big(Q(\mf{g}_\sbullet)\big)\ar[r] & \colim\big(Q(\mf{g}_\sbullet)\big)\ar[r] & Q(\colim\mf{g}_\sbullet)
}$$
is a quasi-isomorphism. The description of $Q$ from Lemma \ref{lem:Qdescribed} shows that the second map is an isomorphism. To see that the left map is a quasi-isomorphism, note that we can work at the level of the underlying chain complexes by Theorem \ref{la:thm:monadicity}(b). 
The filtration \eqref{la:diag:unionofcobars} on $Q(\mf{g}_\sbullet)$ induces a filtration on the (homotopy) colimit, and it suffices to check that the map on the associated graded is a quasi-isomorphism. By a similar argument as in (a), and using that taking the associated graded commutes with (homotopy) colimits, it suffices to show that
$$\xymatrix{
\hocolim F\Big(\mm{Sym}_A^{\geq 1}(\mf{g}_\sbullet[1])[-1]\Big)\ar[r] & \colim F\Big(\mm{Sym}_A^{\geq 1}(\mf{g}_\sbullet[1])[-1]\Big)
}$$
is a graded quasi-isomorphism. The left Quillen functor $F$ and taking symmetric powers both commute with taking sifted (homotopy) colimits (by Lemma \ref{la:lem:stabilitypropertiesofgoodmaps}(5) and (7)). The result then follows from the fact that $\hocolim\mf{g}\simeq \colim\mf{g}$, since we assumed that $\mf{g}$ was projectively cofibrant.

For part (c), note that by parts (a) and (b), together with Corollary \ref{la:cor:resolutionbyliealgebras}, it suffices to prove this when $\mf{g}=A\otimes_k \mf{h}$ is just the $A$-linear extension of an ordinary $L_\infty$-algebra $\mf{h}$ over $k$. In that case, the map $Q(\mf{g})\rt \mf{g}$ is just the $A$-linear extension of the usual map $\mm{\Omega}(\ol{C}_*(\mf{h}))\rt \mf{h}$ of $L_\infty$-algebras from the operadic cobar construction of $\mf{h}$. This map is a weak equivalence (see e.g.\ \cite{lod12} for a textbook account).
\end{proof}
As usual, the derived mapping space between two $L_\infty$-algebroids can be described by the simplicial set of maps from a cofibrant replacement of the domain to a fibrant simplicial resolution of the codomain. Such a simplicial resolution of fibrant $L_\infty$-algebroids has been described in \cite{vez13}:
\begin{construction}[\cite{vez13}]\label{la:cons:simplicialcotensor}
Let $\mf{g}$ be an $L_\infty$-algebroid over $A$ and let $B$ be any (possibly unbounded) commutative dg-algebra over $k$. Then $\mf{g}\otimes_k B$ has the structure of an $A$-module and an $L_\infty$-algebra and the anchor map extends to a $B$-linear map $\mf{g}\otimes_k B\rt T_A\otimes_k B$. Let $\mf{g}\boxtimes B$ be the pullback
$$\xymatrix{
\mf{g}\boxtimes B\ar[d]_\rho\ar[r] & \mf{g}\otimes B\ar[d]\\
T_A\ar[r] & T_A\otimes B.
}$$
All maps in this diagram are $A$-linear and preserve $L_\infty$-structures, and one can verify that the induced $L_\infty$-structure on $\mf{g}\boxtimes B$ turns it into an $L_\infty$-algebroid over $A$ (see \cite{vez13}). We therefore obtain a functor
$$\xymatrix{
\mf{g}\boxtimes (-)\colon \dgCAlg\ar[r] & \LooAlgd_A
}$$
which preserves pullbacks and fibrations, and weak equivalences when $\mf{g}$ is fibrant. Furthermore, there are natural isomorphisms $\mf{g}\boxtimes (B\otimes_k C) \cong \big(\mf{g}\boxtimes B\big)\boxtimes C$.

For any finite simplicial set $K$, let $\mf{g}^K=\mf{g}\boxtimes \mm{\Omega}[K]$ be the dg-Lie algebroid obtained by applying this functor to the polynomial differential forms on $K$.
\end{construction}
\begin{lemma}\label{la:lem:rightsimplicial}
Let $K\rt L$ be a cofibration between finite simplicial sets and let $\mf{g}\rt \mf{h}$ be a fibration. Then $\mf{g}^L\rt \mf{g}^{K}\times_{\mf{h}^K} \mf{h}^L$ is a fibration, which is a weak equivalence if $\mf{g}\rt \mf{h}$ or $K\rt L$ is a weak equivalence. 
\end{lemma}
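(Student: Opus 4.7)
The plan is to work entirely at the level of underlying dg-$A$-modules, using the pullback description $\mf{g}\boxtimes B = \{x \in \mf{g}\otimes_k B : \rho_\mf{g}(x) \in T_A\otimes 1_B\}$. Two standard inputs suffice: $p\colon \mf{g}\to \mf{h}$ is a degreewise surjection (as a fibration of $L_\infty$-algebroids), and the polynomial de Rham functor $K\mapsto \Omega[K]$ sends cofibrations of simplicial sets to degreewise surjections $\iota^*\colon \Omega[L]\to \Omega[K]$ and weak equivalences to quasi-isomorphisms.

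For the fibration claim, I would first show that the corner map $\mf{g}\otimes \Omega[L] \to (\mf{g}\otimes\Omega[K])\times_{\mf{h}\otimes\Omega[K]} (\mf{h}\otimes\Omega[L])$ is surjective by a standard diagram chase: given $(y,z)$ in the pullback, lift $y$ along $\iota^*$ to some $x_0\in \mf{g}\otimes \Omega[L]$, then correct by lifting $z - p(x_0) \in \mf{h}\otimes\ker(\iota^*)$ along $p$ to obtain a correction in $\mf{g}\otimes\ker(\iota^*)$. The key observation is that this lift $x$ automatically lies inside $\mf{g}^L$: since $\rho_\mf{g} = \rho_\mf{h}\circ p$, one computes $\rho_\mf{g}(x) = \rho_\mf{h}(p(x)) = \rho_\mf{h}(z)\in T_A\otimes 1_L$ by hypothesis on $z\in\mf{h}^L$. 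This compatibility is the only subtle step, and I expect it to be the main obstacle.

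For the weak equivalence claim, I would compute the kernel of $\mf{g}^L\to \mf{g}^K\times_{\mf{h}^K}\mf{h}^L$ directly. An element $x$ lies in the kernel iff $\iota^*(x)=0$ and $p(x)=0$, which cuts out $\ker(p)\otimes_k \ker(\iota^*)$ inside $\mf{g}\otimes\Omega[L]$; the anchor constraint is automatic since $\ker(p)\subseteq \ker\rho_\mf{g}$. If $p$ is a trivial fibration then $\ker(p)$ is acyclic; if $\iota$ is a trivial cofibration then $\iota^*$ is a trivial fibration of cdgas, so $\ker(\iota^*)$ is acyclic. Either way K\"unneth over $k$ makes $\ker(p)\otimes_k \ker(\iota^*)$ acyclic, and combining this with the surjectivity above, the long exact sequence of $0\to \ker\to \mf{g}^L\to \mf{g}^K\times_{\mf{h}^K}\mf{h}^L\to 0$ shows the map is a quasi-isomorphism. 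Edge cases (such as $K=\emptyset$, where $\Omega[\emptyset]$ is the zero cdga and $\mf{g}^\emptyset = T_A$) go through by the same formulas.
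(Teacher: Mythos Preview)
Your argument is correct and follows essentially the same idea as the paper: both reduce the question to the well-known (acyclic) fibration property of the corner map $\mf{g}\otimes\Omega[L]\to (\mf{g}\otimes\Omega[K])\times_{\mf{h}\otimes\Omega[K]}(\mf{h}\otimes\Omega[L])$ on the tensor level, and then deduce the result for $\boxtimes$. The paper packages this more conceptually by observing that the $\boxtimes$-corner map is literally a pullback of the tensor corner map along $T_A\to T_A\otimes\Omega[L]$, and then invokes stability of (acyclic) fibrations under base change; your proof unpacks the same content into an explicit lift-and-correct surjectivity argument plus a direct kernel computation $\ker(p)\otimes_k\ker(\iota^*)$, which has the advantage of being self-contained (the paper cites Bousfield--Gugenheim for the tensor case).
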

\begin{proof}
It is well-known that the map $\mf{g}\otimes \mm{\Omega}[L]\rt \mf{g}\otimes \mm{\Omega}[K]\times_{\mf{h}\otimes \mm{\Omega}[K]} \mf{h}\otimes \mm{\Omega}[L]$
is a surjection and a quasi-isomorphism whenever $\mf{g}\rt \mf{h}$ or $K\rt L$ is a weak equivalence (cf.\ \cite{bou76}). The assertion now follows by considering the two pullback squares
$$\xymatrix{
\mf{g}^L\ar[r]\ar[d] &  \mf{g}^{K}\times_{\mf{h}^K} \mf{h}^L\ar[d]\ar[r] & T_A\ar[d]\\
\mf{g}\otimes \mm{\Omega}[L]\ar[r] & \mf{g}\otimes \mm{\Omega}[K]\times_{\mf{h}\otimes \mm{\Omega}[K]} \mf{h}\otimes \mm{\Omega}[L]\ar[r] & T_A\otimes \mm{\Omega}[L]
}$$
and using that (acyclic) fibrations are stable under base change.
\end{proof}
\begin{corollary}\label{la:cor:mappingspacesbetweenliealgebroids}
Let $\mf{g}$ be an $A$-cofibrant $L_\infty$-algebroid and let $\mf{h}$ be fibrant. Then the simplicial set \begin{equation}\label{eq:mappingspace}
\Map_{\LooAlgd_A^{(\infty)}}(\mf{g}, \mf{h}^{\Del[-]})
\end{equation}
is a model for the derived mapping space $\Map^\mathbb{R}(\mf{g}, \mf{h})$. 
\end{corollary}
\begin{remark}
When the $L_\infty$-algebroids $\mf{g}$ and $\mf{h}$ are concentrated in nonnegative degrees, one can also compute the mapping space using a semi-model structure on connective dg-Lie algebroids. In this case, one just has to assume that the map $\mf{h}\rt \tau_{\sgeq 0}T_A$ is a surjection in degrees $>0$; this becomes particularly easy when $A$ is discrete (so that $T_A$ is concentrated in degree $0$).
\end{remark}
\begin{remark}\label{rem:modelforloc}
The simplicial sets \eqref{eq:mappingspace} endow the category $\LooAlgd_A^{(\infty)}$ with an enrichment over simplicial sets. Corollary \ref{la:cor:mappingspacesbetweenliealgebroids} shows that the $\infty$-categorical localisation of the semi-model category of $L_\infty$-algebroids can be modelled by the full simplicial subcategory of $\LooAlgd_A^{(\infty)}$ on the $L_\infty$-algebroids which are fibrant and $A$-cofibrant.
\end{remark}
\begin{remark}
A similar analysis can be carried out with dg-Lie algebroids instead of $L_\infty$-algebroids: $\infty$-morphisms of dg-Lie algebroids $\mf{g}\leadsto \mf{h}$ correspond to maps out of a certain cofibrant replacement of $\mf{g}$, at least when $\mf{g}$ is $A$-cofibrant. This cofibrant replacement is simply the image of the $L_\infty$-algebroid $Q(\mf{g})$ under the left Quillen functor from Corollary \ref{la:cor:equivalencetohomotopyLRalgebras}. 

The $\infty$-categorical localisation of the semi-model category of dg-Lie algebroids is then the full simplicial subcategory of $\LieAlgd_A^{(\infty)}$ on the dg-Lie algebroids which are fibrant and $A$-cofibrant. In particular, Corollary \ref{la:cor:equivalencetohomotopyLRalgebras} shows that this simplicial category of dg-Lie algebroids is equivalent to the simplicial category of $L_\infty$-algebroids of Remark \ref{rem:modelforloc}.
\end{remark}

\section{Applications}\label{sec:appl}
In this section, we give two examples of the homological algebra that the semi-model structure on dg-Lie algebroids facilitates. First, we illustrate how classical Lie algebroid cohomology can be understood model-categorically in terms of mapping spaces. Furthermore, we provide a description of the \emph{loop space} of a dg-Lie algebroid $\mf{g}$. This loop space can naturally be considered as a dg-Lie algebroid in graded-mixed complexes, whose (graded-mixed) Lie algebroid cohomology can be described by the Weil algebra of $\mf{g}$.

\paragraph{Representations}
Recall that a \emph{representation} of a dg-Lie algebroid $\mf{g}$ over $A$ is given by a dg-$A$-module $E$, together with a Lie algebra representation
\begin{equation}\label{diag:strictrep}\vcenter{\xymatrix{
\nabla\colon \mf{g}\otimes_k E\ar[r] & E
}}\end{equation}
such that $\nabla_{ax} e = a\nabla_{x} e$ and $\nabla_x(ae) = x(a)e + (-1)^{ax} a \nabla_{x} e$, for all $a\in A$, $x\in \mf{g}$, and $e\in E$. There are at least two other ways of describing a $\mf{g}$-representation on a dg-$A$-module $E$:
\begin{enumerate}
\item Let $\mm{At}(E)$ be the Atiyah Lie algebroid of $E$, as in Example \ref{ex:atiyah}. Then a $\mf{g}$-representation is just a map of dg-Lie algebroids
$$\xymatrix{
\mf{g}\ar[r] & \mm{At}(E).
}$$
\item If $E$ is a $\mf{g}$-representation, then $\mf{g}\oplus E$ has the structure of a dg-Lie algebroid, with anchor map $(\rho, 0)\colon \mf{g}\oplus E\rt T_A$ and bracket 
$$
[(x, e), (y, f)] = ([x, y], \nabla_x f - \nabla_y e).
$$
There is an obvious inclusion and retraction $\mf{g}\rt \mf{g}\oplus E\rt \mf{g}$. Using these maps, one can realise the category of $\mf{g}$-representations as the full subcategory of $\mf{g}\big/\dgLieAlgd_A\big/\mf{g}$ on those retract diagrams $\mf{g}\rt \mf{g}\oplus \mf{m}\rt \mf{g}$ for which the Lie bracket vanishes on $\mf{m}\otimes\mf{m}$.
\end{enumerate}
\begin{remark}\label{rem:modulecategory}
Suppose that $\ope{P}$ is a dg-operad with $\ope{P}(0)=0$ and let $\ope{P}(1)$ be its dg-algebra of unary operations. Since $\ope{P}(1)$ is the quotient of the operad $\ope{P}$ by all morphisms of arity $\geq 2$, the category of dg-modules over $\ope{P}(1)$ is equivalent to category of dg-$\ope{P}$-algebras on which the operations of arity $\geq 2$ are all zero. On the other hand, the category of dg-$\ope{P}(1)$-modules is also equivalent to the category of \emph{abelian group objects} in the category of (all) dg-$\ope{P}$-algebras \cite[Lemma 1.3, 1.4]{ber09}.

Now recall that the category $\mf{g}\big/\dgLieAlgd_A\big/\mf{g}$ can be identified with the category of algebras over the reduced enveloping operad $\rEnv_\mf{g}$ of $\mf{g}$, via the assignment $\mf{g}\oplus \mf{m}\mapsto \mf{m}$ (Definition \ref{la:def:reducedenvelopingoperad}). Applying the above observations to algebras over $\rEnv_\mf{g}$, the following categories are then equivalent:
\begin{itemize}
\item the category of modules over $\rEnv_\mf{g}(1)$, the dg-algebra of unary operations in the reduced enveloping operad.
\item the category of abelian group objects in $\mf{g}\big/\dgLieAlgd_A\big/\mf{g}$.
\item the full subcategory of $\mf{g}\big/\dgLieAlgd_A\big/\mf{g}$ on the $\mf{g}\oplus \mf{m}$ such that the Lie bracket vanishes on $\mf{m}\otimes\mf{m}$. This uses that the 2-sided ideal of operations of arity $\geq 2$ is generated by the Lie bracket, by Remark \ref{la:rem:presentationofreducedenveloping}.
\end{itemize}
Unravelling the definitions, one sees that $\rEnv_\mf{g}(1)$ agrees with the usual \emph{enveloping algebra} $\mc{U}(\mf{g})$ of $\mf{g}$, as described in e.g.\ \cite{rin63}. Since the category $\Rep_\mf{g}^{\dg}$ of $\mf{g}$-representations is equivalent to the category of left modules over a dg-algebra, it carries a model structure with weak equivalences (fibrations) the quasi-isomorphisms (surjections).
\end{remark}
\begin{example}
Every dg-Lie algebroid has a natural representation on $A$ (via the anchor) and on the kernel of its anchor map (via the Lie bracket). 
\end{example}
\begin{example}\label{ex:lem:monoidalstructureonmodules}
The category $\dgRep_\mf{g}$ has a closed symmetric monoidal structure, given by $E\otimes_A F$ endowed with the $\mf{g}$-representation
$$
\nabla_x(e\otimes f) = \nabla_x(e)\otimes f + (-1)^{xe} e\otimes \nabla_x(f).
$$
The internal hom is given by $\Hom_A(E, F)$, equipped with the conjugate representation of $\mf{g}$.
This does not make $\dgRep_\mf{g}$ a monoidal model category, but for every $\mf{g}$-representation $E$ whose underlying dg-$A$-module is cofibrant, the functor $E\otimes_A (-)$ does preserve quasi-isomorphisms. 

When $\mf{g}$ is a cofibrant dg-Lie algebroid, the enveloping algebra $\mc{U}(\mf{g})\cong \Env_\mf{g}(1)$ is cofibrant by Theorem \ref{la:thm:monadicity}. It follows that every cofibrant $\mf{g}$-representation has a cofibrant underlying dg-$A$-module, so that the tensor product can be derived.
\end{example}
The above definitions have analogues for $L_\infty$-algebras:
\begin{definition}\label{la:def:moduleoverlooalgebroid}
Let $\mf{g}$ be an $L_\infty$-algebroid over $A$. A \emph{$\mf{g}$-representation} is a dg-$A$-module $E$, together with operations 
$$\xymatrix{
[x_1, ..., x_n, -]\colon E\ar[r] & E
}$$
of degree $|x_1|+...+|x_n|+n-2$ for every $x_1, ..., x_n\in \mf{g}$, such that (ignoring all Koszul signs due to permutations of variables)
\begin{align}\label{la:eq:repwithoutdiff1}
\begin{aligned}\leavevmode
[x_{\sigma(1)}, ..., x_{\sigma(n)}, e] &= (-1)^\sigma [x_1, ..., x_n, e] & \sigma\in \Sigma_n\\
[a \cdot x_1, ..., x_n, e] &= (-1)^{(n-1)a} a\cdot [x_1, ..., x_n, e]\\
[x_1, ..., x_n, a\cdot e] &= (-1)^{(n-1)a} a\cdot [x_1, ..., x_n, e] & n\geq 2\\
[x_1, a\cdot e] &= a\cdot [x_1, e] + x_1(a)\cdot e.
\end{aligned}
\end{align}
Furthermore, the brackets have to determine the structure of a module over the $L_\infty$-algebra $\mf{g}$, i.e.\
\begin{equation}\label{la:eq:looaction}
J^{n+1}(x_1, \dots, x_n, e)=0.
\end{equation}
for all $n\geq 0$, where $J^{n+1}$ is the Jacobiator from \ref{eq:jacobiator}, $x_i\in\mf{g}$ and $e\in E$.
\end{definition}
\begin{remark}\label{rem:repsoverliealgd}
When $\mf{g}$ is a dg-Lie algebroid, there are now two notions of $\mf{g}$-representation. To avoid confusion, we will call a representation in the sense of \eqref{diag:strictrep} a \emph{strict} $\mf{g}$-representation and a representation in the sense of Definition \ref{la:def:moduleoverlooalgebroid} will be called an \emph{$L_\infty$-representation} of $\mf{g}$.
\end{remark}
\begin{lemma}\label{lem:looreps}
Let $\mf{g}$ be an $L_\infty$-algebroid over $A$ and let $E$ be a dg-$A$-module. Then the following data is equivalent:
\begin{enumerate}
\item[(0)] an $L_\infty$-representation of $\mf{g}$ on $E$.
\item the structure of a retract diagram of $L_\infty$-algebroids on $\mf{g}\rt \mf{g}\oplus E\rt \mf{g}$, for which all brackets vanish when evaluated on at least two element of $E$.
\item an $\infty$-morphism $\mf{g}\leadsto \mm{At}(E)$ to the Atiyah Lie algebroid of $E$.
\end{enumerate}
\end{lemma}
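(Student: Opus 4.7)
The strategy is to prove $(0)\Leftrightarrow(1)$ and $(0)\Leftrightarrow(2)$ separately, each by direct unpacking of the definitions; the equivalence $(1)\Leftrightarrow(2)$ then follows by composing dictionaries. In both cases one identifies the three descriptions with the same underlying data: a collection of graded-symmetric, suitably $A$-multilinear brackets $[X_1,\dots,X_n,-]\colon E\rt E$ compatible with the $L_\infty$-structure of $\mf{g}$.

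For $(0)\Leftrightarrow(1)$, given a $\mf{g}$-representation I would endow $\mf{g}\oplus E$ with anchor $(\rho_{\mf{g}},0)$ and with brackets defined case by case: on tuples lying entirely in $\mf{g}$, use the bracket of $\mf{g}$; on tuples with exactly one entry $v\in E$, set them to $[X_1,\dots,X_{n-1},v]$; on tuples with two or more $E$-entries, set them to zero. The graded anti-symmetry and the Leibniz rules of Definition \ref{la:def:loorinehartalgebras} for this putative structure on $\mf{g}\oplus E$ then match \eqref{la:eq:repwithoutdiff1} slot by slot---the unary rule $[X,a\cdot v]=X(a)\cdot v+a\cdot[X,v]$ corresponds to the anchor contribution in the binary Leibniz rule, and for $n\geq 3$ strict $A$-multilinearity of both sides matches. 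The Jacobiator identities $J^k=0$ split according to the number of $E$-inputs: with none they reproduce the Jacobi identities for $\mf{g}$, with exactly one they become \eqref{la:eq:looaction}, and with two or more every nonzero summand of \eqref{eq:jacobiator} would require evaluating a bracket on two $E$-elements and therefore vanishes automatically. The obvious inclusion and projection give the retract diagram, and the inverse direction just reads off the operations of a representation from brackets with one $E$-input.

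For $(0)\Leftrightarrow(2)$, Definition \ref{la:def:nonlinearmap} unpacks a nonlinear map $\mf{g}\leadsto \mm{At}(E)$ as a twisting cochain $\tau\colon \ol{C}_*(\mf{g})\rt \mm{At}(E)[1]$ whose components, by condition (ii), descend to graded-symmetric $A$-multilinear maps $\tau_n\colon \mm{Sym}^n_A(\mf{g}[1])\rt \mm{At}(E)[1]$. Condition (i) fixes the anchor part of $\tau_n$: it equals $\rho_{\mf{g}}$ when $n=1$ and vanishes when $n\geq 2$. Hence for $n\geq 2$ the map $\tau_n$ factors through $\mm{End}_A(E)\subseteq \mm{At}(E)$, while $\tau_1$ sends $X$ to a pair $(\rho_{\mf{g}}(X),\nabla_X)$ with $\nabla_X$ a derivation of $E$. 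Reading $\tau_n$ as the operation $[X_1,\dots,X_n,-]\colon E\rt E$ recovers exactly the data of Definition \ref{la:def:moduleoverlooalgebroid}, and the Maurer-Cartan equation \eqref{la:eq:maurercartan} for $\tau$ unfolds, component by component, into the identities \eqref{la:eq:looaction}. The main subtlety lies in the unary part: $\tau_1$ cannot be $A$-linear as a map into a plain dg-$A$-module, because of the Leibniz rule $[X,a\cdot v]=X(a)\cdot v+a\cdot[X,v]$; the choice to target $\mm{At}(E)$ rather than $\mm{End}_A(E)$ is precisely what allows this failure to be absorbed into the derivation part of $\nabla_X$, and is the only structural obstacle to an otherwise routine sign and linearity bookkeeping.
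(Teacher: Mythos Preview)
Your proposal is correct and follows essentially the same approach as the paper: both treat $(0)\Leftrightarrow(1)$ and $(0)\Leftrightarrow(2)$ by directly unwinding the definitions. For $(0)\Leftrightarrow(2)$ there is a minor organizational difference: the paper first invokes the known $k$-linear correspondence between $L_\infty$-representations and twisting cochains $\ol{C}_*(\mf{g})\rt \mm{End}_k(E)[1]$ (citing Hinich), and then observes that the extra conditions \eqref{la:eq:repwithoutdiff1} are exactly what force the map $(\rho,\tau)$ to land in $\mm{At}(E)\subseteq T_A\oplus \mm{End}_k(E)$ and to be graded $A$-linear; you instead work directly with the twisting cochain into $\mm{At}(E)$ and read off the conditions from there. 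Both routes amount to the same bookkeeping.
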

\begin{proof}
Unwinding the definitions, the nontrivial brackets on $\mf{g}\oplus E$ precisely correspond to the operations from Definition \ref{la:def:moduleoverlooalgebroid}. This shows that (0) and (1) are equivalent.

By \cite{hin93}, a $k$-linear $L_\infty$-algebra representation of $\mf{g}$ on $E$ is equivalent to the data of a twisting cochain $\tau\colon \ol{C}_*(\mf{g})\rt \mm{End}_k(E)[1]$ to the endomorphism Lie algebra of $E$: the map $\tau$ is given by
$$\xymatrix{
\mm{Sym}^n_k\mf{g}[1]\ar[r] & \mm{End}_k(E)[1]; \hspace{4pt} x_1\otimes \dots\otimes x_n\ar[r] & [x_1, \dots, x_n, -].
}$$
The conditions \eqref{la:eq:repwithoutdiff1} are now equivalent to the condition that 
$$\xymatrix{
(\rho, \tau)\colon \ol{C}_*(\mf{g})\ar[r] & \big(T_A\oplus \mm{End}_k(E)\big)[1]
}$$
takes values in the Atiyah Lie algebroid of $E$ and is graded $A$-linear.
\end{proof}
By (1), the category $\Rep^{(\infty)}_\mf{g}$ of $L_\infty$-representations of an $L_\infty$-algebroid $\mf{g}$ is equivalent to the category of modules over $\rEnv_\mf{g}(1)$, the unary operations of its reduced enveloping operad. In particular, the category  carries a model structure in which a map is a weak equivalence (fibration) if and only if the underlying map of dg-$A$-modules is one.
\begin{corollary}
A map $f\colon \mf{g}\rt \mf{h}$ of $L_\infty$-algebroids induces a Quillen adjunction
$$\xymatrix{
f_*\colon \Rep^{(\infty)}_\mf{g}\ar@<1ex>[r] & \Rep^{(\infty)}_\mf{h}\ar@<1ex>[l]\colon f^!.
}$$
This is a Quillen equivalence whenever $f$ is a weak equivalence between $A$-cofibrant $L_\infty$-algebroids.
\end{corollary}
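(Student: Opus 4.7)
The right adjoint $f^!$ sends an $\mf{h}$-representation $E$ to the same underlying dg-$A$-module, with $\mf{g}$-representation structure obtained by precomposing the nonlinear map $\mf{h}\leadsto \mm{At}(E)$ (Lemma \ref{lem:looreps}) with $f$; in particular $f^!$ acts as the identity on underlying dg-$A$-modules. Since the fibrations and weak equivalences in $\dgRep$ are detected on underlying dg-$A$-modules, $f^!$ preserves both, so $(f_*, f^!)$ is a Quillen adjunction.

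Using the identification $\dgRep_\mf{g}\simeq \modcat{Mod}_{U(\mf{g})}$ with $U(\mf{g}):=\rEnv_\mf{g}(1)$ from Remark \ref{rem:modulecategory}, the adjunction $(f_*, f^!)$ becomes the standard extension-restriction of scalars along the dg-algebra map $U(f)\colon U(\mf{g})\rt U(\mf{h})$ induced by the map of reduced enveloping operads of Remark \ref{la:rem:functorialityofreducedenveloping}. For the projective (transferred) model structure on modules over a dg-algebra, such an adjunction is a Quillen equivalence if and only if the map of dg-algebras is a quasi-isomorphism. It therefore suffices to prove that $U(f)$ is a quasi-isomorphism whenever $f$ is a weak equivalence between $A$-cofibrant $L_\infty$-algebroids.

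I would establish this in two steps. First, for weak equivalences between \emph{cofibrant} $L_\infty$-algebroids, the pushout analysis of Lemma \ref{lem:hoinvarianceofenveloping} shows that $U=\Env(1)$ takes generating trivial-cofibration cell attachments to trivial cofibrations of chain complexes. Combined with the preservation of filtered colimits (Lemma \ref{la:lem:propertiesofenveloping}(1)) and stability under retracts, Ken Brown's lemma then yields that $U$ preserves all weak equivalences between cofibrant objects. Second, I would bootstrap to the $A$-cofibrant case using the cobar construction of Section \ref{la:sec:cofibrantreplacement}: in the square
$$\xymatrix{
Q(\mf{g})\ar[r]^{Q(f)}\ar[d]_{\epsilon} & Q(\mf{h})\ar[d]^{\epsilon}\\
\mf{g}\ar[r]_f & \mf{h}
}$$
the objects $Q(\mf{g}),Q(\mf{h})$ are cofibrant (Lemma \ref{la:lem:cobariscofibrant}), the vertical counits are weak equivalences by Lemma \ref{la:lem:propertiesofcobar}(c) (this is where $A$-cofibrancy is used), and $Q(f)$ is a weak equivalence between cofibrant objects by Lemma \ref{la:lem:propertiesofcobar}(a). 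By the first step, $U(Q(f))$ is therefore a quasi-isomorphism, and the two-out-of-three property reduces the claim to showing that $U(\epsilon)\colon U(Q(\mf{g}))\rt U(\mf{g})$ is a quasi-isomorphism for every $A$-cofibrant $\mf{g}$.

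The last assertion is the main obstacle. I would attack it by resolving $\mf{g}$ via a projectively cofibrant simplicial diagram of action $L_\infty$-algebroids $A\otimes\mathfrak{l}^\sbullet$ (the $L_\infty$-analogue of Corollary \ref{la:cor:resolutionbyliealgebras}); on such action algebroids, Lemma \ref{la:lem:propertiesofenveloping}(2) gives $U(A\otimes \mathfrak{l})\cong A\otimes U_{L_\infty}(\mathfrak{l})$, whose homotopy invariance on (automatically $k$-cofibrant) $L_\infty$-algebras is a standard operadic fact. Combined with Theorem \ref{la:thm:monadicity}(b) applied both to $\mf{g}$ and to $Q(\mf{g})$ (which is a sifted homotopy colimit of $L_\infty$-algebroids lying over the same diagram, compatibly with $\epsilon$), this identifies both $U(Q(\mf{g}))$ and $U(\mf{g})$ with the same homotopy colimit $\hocolim\bigl(A\otimes U_{L_\infty}(\mathfrak{l}^\sbullet)\bigr)$, and makes $U(\epsilon)$ into the induced weak equivalence.
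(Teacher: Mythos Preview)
Your setup is correct: the Quillen adjunction is clear, the identification with extension/restriction along $U(f)\colon \rEnv_\mf{g}(1)\rt\rEnv_\mf{h}(1)$ is the right reduction, and your first step (that $\Env_{(-)}(1)$ sends weak equivalences between \emph{cofibrant} $L_\infty$-algebroids to quasi-isomorphisms, via Lemma~\ref{lem:hoinvarianceofenveloping} and Ken Brown) is fine.

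The gap is in the last step, where you try to show that $U(\epsilon)\colon U(Q(\mf{g}))\rt U(\mf{g})$ is a quasi-isomorphism by resolving an $A$-cofibrant $\mf{g}$ by a projectively cofibrant simplicial diagram and invoking Theorem~\ref{la:thm:monadicity}(b) (really Theorem~\ref{la:thm:monadicityrephrased}). That theorem only controls $\Env$ on the \emph{strict} colimit of a projectively cofibrant diagram, and such a colimit is automatically cofibrant --- not merely $A$-cofibrant. So you cannot realize your $\mf{g}$ as that colimit; you only get a weak equivalence $\colim\mf{g}^\sbullet\rt\mf{g}$ from a cofibrant object to $\mf{g}$. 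Passing $U$ across that weak equivalence is exactly the statement you are trying to establish, so the argument is circular. Corollary~\ref{la:cor:resolutionbyliealgebras} likewise only produces a diagram whose terms are \emph{equivalent} to Lie algebras, not literal action algebroids, so Lemma~\ref{la:lem:propertiesofenveloping}(2) does not apply termwise either.

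The paper avoids this entirely by not computing $\rEnv_\mf{g}(1)$ for $A$-cofibrant $\mf{g}$ at all. Instead it uses Lemma~\ref{lem:looreps}(2): an $L_\infty$-representation of $\mf{g}$ is a nonlinear map $\mf{g}\leadsto\mm{At}(E)$, hence a linear map $Q(\mf{g})\rt\mm{At}(E)$, hence (since $\mm{At}(E)$ is a dg-Lie algebroid) a map of dg-Lie algebroids $\tilde{\mf{g}}:=L\,Q(\mf{g})\rt\mm{At}(E)$, i.e.\ a \emph{strict} representation of the cofibrant dg-Lie algebroid $\tilde{\mf{g}}$. This is an honest equivalence of categories $\dgRep_\mf{g}\simeq\Rep^{\dg,\mm{strict}}_{\tilde{\mf{g}}}$ over $\dgMod_A$, under which $(f_*,f^!)$ becomes induction/restriction along the weak equivalence $\tilde f\colon\tilde{\mf{g}}\rt\tilde{\mf{h}}$ between cofibrant dg-Lie algebroids. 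Now Lemma~\ref{lem:hoinvarianceofenveloping} (plus Ken Brown) gives that $\mc{U}(\tilde f)$ is a quasi-isomorphism and you are done. If you want to keep your framing, this same identification shows $\rEnv^{L_\infty}_\mf{g}(1)\cong\mc{U}(\tilde{\mf{g}})$, which immediately yields your missing step $U(\epsilon)\simeq\mc{U}(L\,Q(\epsilon))$ --- a map between enveloping algebras of cofibrant dg-Lie algebroids, hence a quasi-isomorphism.
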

\begin{proof}
The Quillen pair $(f_*, f^!)$ is induced by induction and restriction along the map of reduced enveloping operads $f\colon \rEnv_\mf{g}\rt \rEnv_\mf{h}$. For the second assertion, let $Q(f)\colon Q(\mf{g})\rt Q(\mf{h})$ denote the map induced by $f$ on `cobar' resolutions of Lemma \ref{lem:Qdescribed}. This is a weak equivalence between cofibrant $L_\infty$-algebroids. Applying the left Quillen equivalence of Corollary \ref{la:cor:equivalencetohomotopyLRalgebras} to $Q(f)$, we obtain a weak equivalence between cofibrant dg-Lie algebroids that we will denote by $\tilde{f}\colon \tilde{\mf{g}}\rt \tilde{\mf{h}}$.

By part (2) of Lemma \ref{lem:looreps}, an $L_\infty$-representation of the $L_\infty$-algebroid $\mf{g}$ is just a map of dg-Lie algebroids $\tilde{\mf{g}}\rt \mm{At}(E)$. It follows that there is a natural equivalence
$$
\Rep^{(\infty)}_\mf{g}\simeq \Rep^{\dg}_{\tilde{\mf{g}}}
$$
between the category of $L_\infty$-representations of $\mf{g}$ and the category of strict representations of the cofibrant dg-Lie algebroid $\tilde{\mf{g}}$. This equivalence of categories identifies the model structures on both sides, which are both transferred from $\dgMod_A$. The Quillen pair $(f_*, f^!)$ can now be identified with the Quillen pair
$$\xymatrix{
\tilde{f}_*\colon \Rep^{\dg}_{\tilde{\mf{g}}}\ar@<1ex>[r] & \Rep^{\dg}_{\tilde{\mf{h}}}\ar@<1ex>[l]\colon \tilde{f}^!.
}$$
associated to the map of dg-Lie algebroids $\tilde{f}\colon \tilde{\mf{g}}\rt \tilde{\mf{h}}$. By Lemma \ref{lem:hoinvarianceofenveloping}, a weak equivalence between cofibrant dg-Lie algebroids induces a weak equivalence on enveloping operads, so that the above Quillen pair is a Quillen equivalence.
\end{proof}

\paragraph{Lie algebroid cohomology}
Let $\mf{g}$ be an $L_\infty$-algebroid and let $E$ be a representation of $\mf{g}$. Our aim is to give a cohomological description of the derived space of sections
$$\xymatrix{
\mf{g}\ar[rd]_{=} \ar@{..>}[r] & \mf{g}\oplus E\ar[d]^{\pi}\\
& \mf{g}
}$$
of the canonical projection from $\mf{g}\oplus E$ to $\mf{g}$. 

Because the semi-model structure on $L_\infty$-algebroids is right proper, the derived space of sections of $\pi$ can be computed as the derived mapping space from $\mf{g}$ to $\mf{g}\oplus E$ in the slice semi-model category $\dgLooAlgd_A/\mf{g}$. As an object over $\mf{g}$, the $L_\infty$-algebroid $\mf{g}\oplus E$ is fibrant. Furthermore, it admits a simple fibrant simplicial resolution, given by
$$\xymatrix{
\Del^\op\ar[r] & \dgLooAlgd_A/\mf{g}; \hspace{5pt} [n]\ar@{|->}[r] & \mf{g}\oplus C^*(\Del[n], E).
}$$
Here $C^*(\Del[n], E)$ are the normalised cochains on $\Del[n]$ with coefficients in $E$, which carry a natural $\mf{g}$-representation.

Let us assume from now on that $\mf{g}$ is an $A$-cofibrant $L_\infty$-algebroid, so that it has an explicit cofibrant replacement $q\colon Q(\mf{g})\rt \mf{g}$, as described in Section \ref{la:sec:cofibrantreplacement}. The datum of map of $L_\infty$-algebroids 
$s\colon Q(\mf{g})\rt \mf{g}\oplus E$ over $\mf{g}$ is equivalent to the datum of a graded $A$-linear map 
$$\xymatrix{
(q, \alpha)\colon \mm{Sym}^{\sgeq 1}_A\mf{g}[1]\ar[r] & \mf{g}[1]\oplus E[1]
}$$
satisfying the Maurer-Cartan equation \eqref{la:eq:maurercartan}, where $q$ is the obvious projection onto $\mf{g}[1]$. Since $q$ already satisfies the Maurer-Cartan equation and there are no nontrivial brackets between elements in $E$, the Maurer-Cartan equation reduces to the following (linear) equation for $\alpha$:
\begin{equation}\label{fm:eq:maurercartanmodule}
\dau_\mm{CE}(\alpha):=\dau_E\circ\alpha - \alpha\circ \dau_{\ol{C}_*(\mf{g})} + \sum_{k\geq 1}\frac{1}{k!} [q, q, \dots, q, \alpha]_{k+1} \stackrel{!}{=} 0.
\end{equation}
\begin{lemma}\label{lem:cecomplex}
Formula \eqref{fm:eq:maurercartanmodule} determines a differential on the graded vector space of graded $A$-linear maps
$$
\ul{\Hom}_A\big(\mm{Sym}^{\sgeq 1}_A\mf{g}[1], E[1]\big).
$$
\end{lemma}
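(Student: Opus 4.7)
My plan is to recognize $\dau_\mm{CE}$ as the convolution-$L_\infty$ differential twisted by a Maurer--Cartan element, then restrict to the subspace of $A$-linear maps. Consider the $k$-linear convolution $L_\infty$-algebra $W := \ul{\Hom}_k\big(\ol{C}_*(\mf{g}), \mf{g}\oplus E\big)$, whose brackets are obtained by precomposing with the iterated comultiplication of $\ol{C}_*(\mf{g})$ and post-composing with the $L_\infty$-brackets of $\mf{g}\oplus E$. By the bar--cobar dictionary \eqref{diag:twistinggivesadj}, Maurer--Cartan elements in $W$ correspond to $L_\infty$-morphisms $\Omega\ol{C}_*(\mf{g}) \to \mf{g}\oplus E$; adding the $A$-linearity and anchor compatibility of Definition \ref{la:def:nonlinearmap} cuts out those of the form $(q, \alpha)$ with $\alpha \in V := \ul{\Hom}_A\big(\mm{Sym}^{\sgeq 1}_A\mf{g}[1], E[1]\big)$. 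Here $q\colon \ol{C}_*(\mf{g}) \to \mf{g}[1]$ is the canonical cobar map, which is itself Maurer--Cartan because the cobar resolution $Q(\mf{g}) \to \mf{g}$ of Section \ref{la:sec:cofibrantreplacement} is a chain map. Twisting $W$ by $q$ yields a new differential
\[
\dau_q(\phi) \;=\; \dau_{\mf{g}\oplus E}\circ\phi - \phi\circ\dau_{\ol{C}_*(\mf{g})} + \sum_{k\geq 1}\tfrac{1}{k!}\,[q,\dots,q,\phi]_{k+1},
\]
which squares to zero by the general theory of twisting an $L_\infty$-algebra by a Maurer--Cartan element. Moreover, since the brackets of $\mf{g}\oplus E$ vanish on any input with at least two $E$-entries (Lemma \ref{lem:looreps}(1)), $\dau_q$ preserves the subspace $\ul{\Hom}_k\big(\ol{C}_*(\mf{g}), E[1]\big)$.

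Next I would verify that $\dau_q$ preserves the subspace $V$ of $k$-linear maps factoring through $\mm{Sym}^{\sgeq 1}_A \mf{g}[1]$, and that its restriction to $V$ matches the formula \eqref{fm:eq:maurercartanmodule}. For this, let $s_\alpha = (q, \tilde\alpha)\colon Q(\mf{g}) \to \mf{g}\oplus E$ be the graded $L_\infty$-algebroid morphism over $\mf{g}$ extending $\alpha$, obtained from the universal property of $Q(\mf{g})$ as a free graded $L_\infty$-algebroid (Lemma \ref{la:lem:cobariscofibrant}). The defect
\[
\Phi(s_\alpha) \;:=\; \dau_{\mf{g}\oplus E}\circ s_\alpha - s_\alpha\circ\dau_{Q(\mf{g})}
\]
is $A$-linear: $s_\alpha$ is $A$-linear, and both $\dau_{Q(\mf{g})}$ and $\dau_{\mf{g}\oplus E}$ are graded $k$-linear derivations for the $A$-action, so the two Leibniz corrections for $a\in A$ cancel and $\Phi(s_\alpha)(a\cdot x) = (-1)^{|a|}\,a\cdot\Phi(s_\alpha)(x)$. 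Restricting $\Phi(s_\alpha)$ to the $A$-module of generators $(\mm{Sym}^{\sgeq 1}_A\mf{g}[1])[-1]$ and projecting to the $E$-summand, only the terms in which $s_\alpha$ contributes exactly one $\tilde\alpha$ survive (again by Lemma \ref{lem:looreps}(1)), leaving precisely the formula \eqref{fm:eq:maurercartanmodule}. This identifies $\dau_\mm{CE}(\alpha)$ with $\dau_q(\alpha)|_E$ viewed as an element of $V$; combined with the previous paragraph, it follows that $\dau_\mm{CE}\colon V \to V$ is well defined and satisfies $\dau_\mm{CE}^{\,2}=0$.

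The main obstacle is the bookkeeping of Koszul signs and of the combinatorial factors $1/k!$ when matching the abstract convolution operator $\dau_q$ with the explicit formula \eqref{fm:eq:maurercartanmodule}. These are the standard ones arising from the bar--cobar adjunction for cocommutative coalgebras, and can be unwound directly from the Maurer--Cartan equation \eqref{la:eq:maurercartan} for $q$; no genuinely new identity is required beyond the axioms of an $L_\infty$-algebroid and the module structure of Definition \ref{la:def:moduleoverlooalgebroid}.
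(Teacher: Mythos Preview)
Your argument is correct, but it takes a different route from the paper's proof. You split the two assertions: for $\dau_\mm{CE}^2=0$ you invoke the general machinery of twisting a convolution $L_\infty$-algebra by a Maurer--Cartan element, and then you separately verify $A$-linearity via the commutator $\Phi(s_\alpha)=\dau\circ s_\alpha - s_\alpha\circ\dau$. The paper instead handles both at once with a single ``cone trick'': replacing $E$ by $E[0,1]$, a chain map $Q(\mf{g})\to\mf{g}\oplus E[0,1]$ over $\mf{g}$ is given by a pair $(\alpha,\beta)$ satisfying $\beta=\dau_\mm{CE}\alpha$ and $\dau_\mm{CE}\beta=0$; but such a map is also uniquely determined by the underlying \emph{graded} map $Q(\mf{g})\to\mf{g}\oplus E$, which is just $\alpha$. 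Hence for every graded $A$-linear $\alpha$ there is a unique graded $A$-linear $\beta=\dau_\mm{CE}\alpha$ with $\dau_\mm{CE}\beta=0$, giving both claims simultaneously.

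Your second paragraph is in fact this same cone argument in disguise: the pair $(s_\alpha,\Phi(s_\alpha))$ \emph{is} the map to $\mf{g}\oplus E[0,1]$, and your Leibniz-cancellation computation is exactly what shows that this map lands in the $A$-linear world. What you gain by separating the two steps is modularity---you can cite twisting theory as a black box---at the cost of some redundancy and the extra bookkeeping you flag at the end (identifying $\Phi(s_\alpha)|_{\text{gens},E}$ with the formula requires unpacking the cobar differential on $Q(\mf{g})$). The paper's approach avoids both the twisting machinery and that unpacking, trading them for the single observation that maps into a cone are the same as graded maps into the base.
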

\begin{proof}
Formula \eqref{fm:eq:maurercartanmodule} determines an $\mathbb{R}$-linear map of (homological) degree $-1$
$$\xymatrix{
\dau_\mm{CE}\colon \ul{\Hom}_A(\mm{Sym}_A\mf{g}[1], E[1])\ar[r] & \ul{\Hom}_{\mathbb{R}}(\mm{Sym}_\mathbb{R}\mf{g}[1], E[1])
}$$ 
To verify that this preserves $A$-multilinear maps and squares to zero, replace $E$ by its cone $E[0, 1]$. Unravelling the definition, a map $Q(\mf{g})\rt \mf{g}\oplus E[0, 1]$ over $\mf{g}$ is determined by a pair of maps
$$\xymatrix{
\alpha\colon \mm{Sym}^{\sgeq 1}_A\mf{g}[1]\ar[r] & E[1] & \beta\colon \mm{Sym}^{\sgeq 1}_A\mf{g}[1]\ar[r] & E[2]
}$$
subject to the condition that $\dau_\mm{CE}\alpha = \beta$ and $\dau_\mm{CE}\beta=0$. 

On the other hand, a map $Q(\mf{g})\rt \mf{g}\oplus E[0, 1]$ over $\mf{g}$ is determined uniquely by a map $Q(\mf{g})\rt \mf{g}\oplus E$ of $L_\infty$-algebroids without differential over $\mf{g}$. Without differential, $Q(\mf{g})$ is freely generated by the graded $A$-module $\big(\mm{Sym}^{\sgeq 1}_A\mf{g}[1]\big)[-1]$ and the map $Q(\mf{g})\rt \mf{g}\oplus E$ is classified by the map $\alpha$ above. 

It follows that for every graded $A$-linear $\alpha$, there is a unique graded $A$-linear $\beta$ such that $\beta=\dau_\mm{CE}\alpha$ and $\dau_\mm{CE}\beta=0$. It follows that $\dau_\mm{CE}$ preserves graded $A$-linear maps and squares to zero.
\end{proof}
\begin{definition}\label{fm:def:reducedcecomplex}
Let $\ol{C}{}^*(\mf{g}, E)$ be the chain complex $\Hom_A\big(\mm{Sym}_A\mf{g}[1], E\big)$, equipped with the differential $\dau_\mm{CE}$ given by formula \ref{fm:eq:maurercartanmodule}. We will refer to $\ol{C}{}^*(\mf{g}, E)$ as the \emph{reduced Chevalley-Eilenberg complex} of $\mf{g}$ with coefficients in $E$.
\end{definition}
The Chevalley-Eilenberg differential $\dau_\mm{CE}$ can be computed explicitly, using that the value of $\frac{1}{k!}[\pi, \pi, \dots, \pi, \alpha]_{k+1}$ on an element $X_1\dots X_n\in \mm{Sym}_A^{\sgeq 1}\mf{g}[1]$ is given by the sum (ignoring Koszul signs)
$$
\sum_{\sigma\in \mm{UnSh}(k, n-k)} \Big[x_{\sigma(1)}, \cdots, x_{\sigma(k)}, \alpha(x_{\sigma(k+1)}, \dots, x_{\sigma(n)})\Big].
$$
Using this, one obtains (modulo Koszul signs) the explicit formula
\begin{align}\label{fm:eq:cediff}
 (\dau_\mm{CE}\alpha)(x_1, ..., x_n) & =  \sum_{k\geq 1}\sum_{\sigma\in \mm{UnSh}(k, n-k)} \hspace{2pt} \big[x_{\sigma(1)}, ..., x_{\sigma(k)}, \alpha(x_{\sigma(k+1)}, ..., x_{\sigma(n)})\big]\nonumber\\
&- \sum_{k\geq 1}\sum_{\sigma\in \mm{UnSh}(k, n-k)} \hspace{2pt} \alpha\Big([x_{\sigma(1)}, ..., x_{\sigma(k)}], x_{\sigma(k+1)}, ..., x_{\sigma(n)}\Big)\nonumber\\
&+ \dau_E\big(\alpha(x_1, ..., x_n)\big).
\end{align}
This is precisely the formula for the usual Chevalley-Eilenberg (or de Rham) differential on
$$
C^*(\mf{g}, E) = \ul{\Hom}_A\big(\mm{Sym}_A\mf{g}[1], E\big)
$$
which computes the cohomology of the $L_\infty$-algebroid $\mf{g}$ with coefficients in $E$. The reduced Chevalley-Eilenberg complex is simply the kernel of the canonical map of chain complexes $C^*(\mf{g}, E)\rt E$ evaluating at $1\in \mm{Sym}_A\mf{g}[1]$. Consequently, one can think of its homotopy groups as the \emph{reduced} cohomology groups $\ol{\mm{H}} {}^i(\mf{g}, E)$ of $\mf{g}$ with coefficients in $E$.

\begin{definition}
For a chain complex $V$ over $k$, let us write $\mm{\Omega}^\infty(V):= \Map^\mathbb{R}(k,V)$ for the derived mapping space from $k$ to $V$. This notation is justified by the following: the $\infty$-category of chain complexes over $k$ has a compact generator $k$ and is therefore equivalent to the $\infty$-category of $\mm{H}(k)$-module spectra \cite[Theorem 7.1.2.1]{lur16}. The space $\mm{\Omega}^\infty(V)$ is exactly the (infinite loop) space underlying the $\mm{H}(k)$-module spectrum associated to $V$.
\end{definition}
\begin{corollary}\label{cor:cohomologyashomotopy}
Let $\mf{g}$ be an $A$-cofibrant $L_\infty$-algebroid and let $E$ be a representation of $\mf{g}$. There is a (natural) equivalence
$$
\Map^\mathbb{R}_{/\mf{g}}(\mf{g}, \mf{g}\oplus E) \simeq \mm{\Omega}^\infty \ol{C}{}^*(\mf{g}, E[1])
$$
between the derived space of sections $\mf{g}\rt \mf{g}\oplus E$ and the space associated to the reduced Chevalley-Eilenberg complex $\ol{C}{}^*(\mf{g}, E[1])$. In particular, there is a natural bijection
$$
\pi_0\Map^\mathbb{R}_{/\mf{g}}(\mf{g}, \mf{g}\oplus E)\cong \ol{\mm{H}} {}^1(\mf{g}, E).
$$
\end{corollary}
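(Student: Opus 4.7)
The approach is to use the explicit cofibrant replacement $Q(\mf{g})$ from Section~\ref{la:sec:cofibrantreplacement} together with the fibrant simplicial resolution $[n]\mapsto \mf{g}\oplus C^*(\Delta[n],E)$ of $\mf{g}\oplus E$ in $\dgLooAlgd_A/\mf{g}$, and then to recognise the resulting mapping simplicial set as the Dold--Kan realisation of $\ol{C}{}^*(\mf{g},E[1])$.

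First, since $Q(\mf{g})$ is cofibrant by Lemma~\ref{la:lem:cobariscofibrant} and the semi-model structure is right proper, the derived mapping space in the slice is modelled by
$$
[n]\longmapsto \Hom_{\dgLooAlgd_A/\mf{g}}\big(Q(\mf{g}),\; \mf{g}\oplus C^*(\Delta[n],E)\big).
$$
Second, the analysis in the paragraphs preceding Lemma~\ref{lem:cecomplex} identifies, for any $\mf{g}$-representation $M$, the set of maps $Q(\mf{g})\rt \mf{g}\oplus M$ over $\mf{g}$ with the set of graded $A$-linear maps $\mm{Sym}^{\sgeq 1}_A\mf{g}[1]\rt M[1]$ satisfying the Maurer--Cartan equation~\eqref{fm:eq:maurercartanmodule}, i.e.\ with $Z_0\ol{C}{}^*(\mf{g},M[1])$. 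Applying this with $M=C^*(\Delta[n],E)$ rewrites the above simplicial set as $[n]\mapsto Z_0\ol{C}{}^*(\mf{g},C^*(\Delta[n],E)[1])$.

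Third, the functor $\ol{C}{}^*(\mf{g},-)=\ul{\Hom}_A(\mm{Sym}^{\sgeq 1}_A\mf{g}[1],-)$ is an enriched $A$-linear Hom in its second argument, and $C^*(\Delta[n],E)\cong E\otimes_k C^*(\Delta[n])$ as a complex of $\mf{g}$-representations (with $\mf{g}$ acting only on the $E$-factor, and $C^*(\Delta[n])$ being levelwise finite-dimensional over $k$). This yields a natural chain isomorphism
$$
\ol{C}{}^*\big(\mf{g},C^*(\Delta[n],E)[1]\big)\cong \ol{C}{}^*(\mf{g},E[1])\otimes_k C^*(\Delta[n]).
$$
The simplicial chain complex $[n]\mapsto \ol{C}{}^*(\mf{g},E[1])\otimes_k C^*(\Delta[n])$ is the standard Dold--Kan integration of $\ol{C}{}^*(\mf{g},E[1])$, so its simplicial set of $0$-cycles is a model for $\Omega^\infty\ol{C}{}^*(\mf{g},E[1])$. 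The $\pi_0$-statement then follows from $\pi_0\Omega^\infty V = H_0(V)$ together with the observation that, by the shift convention, $H_0\ol{C}{}^*(\mf{g},E[1])=\ol{H}{}^1(\mf{g},E)$.

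The main obstacle I anticipate is the bookkeeping in Step three: one must verify that the Chevalley--Eilenberg differential on $\ol{C}{}^*(\mf{g},C^*(\Delta[n],E)[1])$ corresponds under the displayed isomorphism to the total differential on $\ol{C}{}^*(\mf{g},E[1])\otimes_k C^*(\Delta[n])$ with the correct Koszul signs, and that the simplicial structure inherited from the cosimplicial structure of $\Delta[\bullet]$ matches the tautological one on the Dold--Kan side. Once this compatibility is pinned down, both the equivalence and the $\pi_0$-identification follow formally.
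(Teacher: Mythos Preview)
Your proposal is correct and follows essentially the same route as the paper's proof: use $Q(\mf{g})$ and the simplicial resolution $\mf{g}\oplus C^*(\Delta[n],E)$, identify maps with $0$-cycles in the reduced Chevalley--Eilenberg complex, and then swap the two hom/tensor constructions. The only cosmetic difference is that the paper phrases the key isomorphism as $\ol{C}{}^*\big(\mf{g},C^*(\Delta[n],E[1])\big)\cong C^*\big(\Delta[n],\ol{C}{}^*(\mf{g},E[1])\big)$ rather than your tensor-product form; since $C_*(\Delta[n])$ is finite-dimensional these agree, and the paper likewise leaves the sign and differential bookkeeping implicit.
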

\begin{proof}
By Lemma \ref{lem:cecomplex} and the discussion preceding it, an explicit model for the derived mapping space $\Map^\mathbb{R}_{/\mf{g}}(\mf{g}, \mf{g}\oplus E)$ is given by the simplicial set
$$\xymatrix{
\Del^\op\ar[r] & \Set; \hspace{4pt} [n]\ar@{|->}[r] & Z_0\ol{C}{}^*\Big(\mf{g}, C^*(\Del[n], E[1])\Big).
}$$
There are natural isomorphisms
$$
\ol{C}{}^*\Big(\mf{g}, C^*(\Del[n], E[1])\Big)\cong C^*\Big(\Del[n], \ol{C}^*(\mf{g}, E[1])\Big).
$$
The simplicial set of sections of $\mf{g}\oplus E\rt \mf{g}$ can therefore be identified with the simplicial set of $k$-linear maps $k\rt C^*\big(\Del[-], \ol{C}^*(\mf{g}, E[1])\big)$. The latter precisely computes the derived mapping space from $k$ to $\ol{C}^*(\mf{g}, E[1])$, i.e.\ the infinite loop space $\mm{\Omega}^{\infty}\ol{C}{}^*(\mf{g}, E[1])$.
\end{proof}
\begin{example}
Recall that there is a canonical representation of $\mf{g}$ on $A$ via its anchor map. By shifting degrees, we also obtain representations of $\mf{g}$ on the various $A[n]$. By Corollary \ref{cor:cohomologyashomotopy}, we have that
$$
\ol{\mm{H}} {}^n(\mf{g}, A)\cong \ol{\mm{H}} {}^1(\mf{g}, A[n-1])\cong \pi_0\Map^\mathbb{R}_{/\mf{g}}(\mf{g}, \mf{g}\oplus A[n-1]).
$$ 
Note that $\mf{g}\oplus A[n-1]$ is isomorphic to the fibre product $\mf{g}\times_{T_A} T_A\oplus A[n-1]$. Since $T_A\oplus A[n-1]\rt T_A$ is a fibration, this pullback is a homotopy pullback, so that
$$
\Map^\mathbb{R}_{/\mf{g}}(\mf{g}, \mf{g}\oplus A[n-1])\simeq \Map^\mathbb{R}_{/T_A}(\mf{g}, T_A\oplus A[n-1])\simeq \Map^\mathbb{R}(\mf{g}, T_A\oplus A[n-1]).
$$
The last equivalence uses that $T_A$ is the \emph{terminal} $L_\infty$-algebroid. We conclude that the reduced cohomology of $\mf{g}$ with values in its canonical representation can be identified with homotopy classes of maps of $L_\infty$-algebroids
$$
\ol{\mm{H}} {}^n(\mf{g}, A)\cong \pi_0\Map^\mathbb{R}\big(\mf{g}, T_A\oplus A[n-1]\big).
$$
\end{example}
\begin{example}\label{la:ex:loofromcocycle}
Let $\mf{g}$ be an $A$-cofibrant $L_\infty$-algebroid and let 
$$
\alpha\in \ol{C}{}^*(\mf{g}, E)=\Hom_A(\mm{Sym}^{\sgeq 1}_A\mf{g}[1], E)
$$
be a degree $0$ cycle in the reduced Chevalley-Eilenberg complex with coefficients in an $L_\infty$-representation $E$ of $\mf{g}$. Associated to $\alpha$ is a map of $L_\infty$-algebroids $\mf{g}_\alpha\rt \mf{g}$, given by the projection $\mf{g}\oplus E[-2]\rt \mf{g}$ at the level of graded $A$-modules. The bracket on $\mf{g}_\alpha$ is given as follows: a bracket of multiple elements in $E[-2]$ is zero and the bracket of elements in $\mf{g}$ with an element in $E[-2]$ is given by the $\mf{g}$-representation on $E$. The bracket and differential of elements in $\mf{g}$ is given by
\begin{equation}\label{eq:alphatwistedbracket}
[x_1, \dots, x_k] = \big([x_1, \dots, x_k]_\mf{g}, \alpha(x_1, \dots, x_k)\big) \qquad\quad \dau(x) = (\dau_\mf{g}(x), \alpha(x)).
\end{equation}
We claim that there are pullback squares of $L_\infty$-algebroids (and strict maps)
\begin{equation}\label{diag:zigzagpullbacks}\vcenter{\xymatrix@R=1.9pc{
\mf{g}_\alpha\ar[d] & Q(\mf{g})_\alpha\ar[d]\ar[l]_-\sim\ar[r] & \mf{g}\oplus E[0, 1]\ar[d]\\
\mf{g} & Q(\mf{g})\ar[l]_-\sim^-q \ar[r]_-{(q, \alpha)} & \mf{g}\oplus E[1].
}}\end{equation}
Here $q$ is the canonical weak equivalence from the cobar resolution of $\mf{g}$ and $Q(\mf{g})_\alpha$ is \emph{defined} by the left pullback square. Note that $Q(\mf{g})_\alpha\rt \mf{g}_\alpha$ is the pullback of a weak equivalence along a fibration, and hence a weak equivalence itself. 

Indeed, as graded $A$-modules, there is an isomorphism $Q(\mf{g})_\alpha\cong Q(\mf{g})\oplus E$, but the brackets and differential are twisted by $\alpha$ as in Equation \eqref{eq:alphatwistedbracket}. Now recall that without differential, $Q(\mf{g})$ is freely generated by $\big(\mm{Sym}_A^{\sgeq 1}\mf{g}[1]\big)[-1]$. Consequently, the obvious linear section 
$$\xymatrix{
(\mm{id}, 0)\colon \big(\mm{Sym}_A^{\sgeq 1}\mf{g}[1]\big)[-1] \ar[r] & Q(\mf{g})\oplus E\cong Q(\mf{g})_\alpha
}$$
extends to a splitting $Q(\mf{g})\rt Q(\mf{g})_\alpha\rt Q(\mf{g})$, where both maps preserve the brackets. This splitting induces a \emph{different} isomorphism $Q(\mf{g})_\alpha\cong' Q(\mf{g})\oplus E$. Without differential, this isomorphism $\cong'$ identifies $Q(\mf{g})_\alpha$ with the square zero extension of $Q(\mf{g})$ by the $Q(\mf{g})$-representation $E$ (obtained by restriction along $Q(\mf{g})\rt \mf{g})$. A tedious but straightforward computation, using the description of the differential on $Q(\mf{g})$ from Lemma \ref{lem:Qdescribed}, then shows that the differential on $Q(\mf{g})_\alpha$ takes the form
$$
\dau(v, e) = \big(\dau_{Q(\mf{g})}(v), \dau_E(e)+\alpha(v)\big) \qquad\qquad (v, e)\in Q(\mf{g})\oplus E\cong' Q(\mf{g})_\alpha.
$$
Finally, the pullback of the right square is exactly the square zero extension $Q(\mf{g})\oplus E$ endowed with the above differential.

Note that the map $\mf{g}\oplus E[0,1]\rt \mf{g}\oplus E[1]$ is weakly equivalent to the map $(\mm{id}, 0)\colon \mf{g}\rt \mf{g}\oplus E[1]$. At the $\infty$-categorical level, the diagram \eqref{diag:zigzagpullbacks} therefore exhibits $\mf{g}_\alpha$ as the homotopy pullback of the map $\alpha\colon \mf{g}\leadsto \mf{g}\oplus E[1]$ and the map $(\mm{id}, 0)\colon \mf{g}\rt \mf{g}\oplus E[1]$.
\end{example}

\paragraph{Loop spaces}
As in every model category, the \emph{free loop space} $\mc{L}\mf{g}$ of an $L_\infty$-algebroid is the homotopy limit of the constant $S^1$-diagram with value $\mf{g}$. There are various ways to compute such free loop spaces, depending on a choice of cell decomposition for the circle $S^1$. Taking the CW-structure on $S^1$ with two zero cells and two 1-cells, one obtains the usual description of $\mc{L}\mf{g}$ as the derived self-intersection of the diagonal map $\mf{g}\rt \mf{g}\times^h_{T_A} \mf{g}$.

When $\mf{g}$ is a \emph{fibrant} (i.e.\ transitive) $L_\infty$-algebroid, there is a simple way to compute $\mc{L}\mf{g}$ using the cotensoring of $L_\infty$-algebroids over (unbounded) cdgas from Construction \ref{la:cons:simplicialcotensor}. Indeed, $\mc{L}\mf{g}$ can simply be computed as $\mf{g}\boxtimes \mm{\Omega}[S^1]$, where $S^1$ is some finite simplicial model for the circle and $\mm{\Omega}[S^1]$ is the cdga of polynomial differential forms on it. In fact, recall that $H^*(S^1)=k[\epsilon_{-1}]$ is the free graded algebra on a generator of (homological) degree $-1$. We can therefore choose a weak equivalence of cdgas $k[\epsilon_{-1}]\rt \mm{\Omega}[S^1]$ and identify
$$
\mc{L}\mf{g} = \mf{g}\boxtimes k[\epsilon_{-1}].
$$
Let $\mf{n}$ denote the kernel of the anchor map $\rho\colon \mf{g}\rt T_A$, which carries a natural (adjoint) $\mf{g}$-representation. Unwinding the definitions, we find that $\mf{g}\boxtimes k[\epsilon_{-1}]$ is isomorphic to $\mf{g}\oplus \mf{n}[-1]$. There are maps
$$\xymatrix{
\mf{g}\ar[r]^-\iota & \mc{L}\mf{g}=\mf{g}\oplus \mf{n}[-1]\ar[r]^-{\pi} & \mf{g}
}$$
which realise $\mf{g}\oplus \mf{n}[-1]$ as a square zero extension of $\mf{g}$. The inclusion $\iota$ is the canonical map induced by $k\rt \mm{\Omega}[S^1]$, but the projection $\pi$ is not canonical: it depends on a choice of basepoint for $S^1$.
\begin{lemma}\label{lem:gradedmixedstructure}
Let $\mf{g}$ be a fibrant $L_\infty$-algebroid. Then $\mc{L}\mf{g}= \mf{g}\boxtimes k[\epsilon_{-1}]$ has the structure of a graded-mixed $L_\infty$-algebroid (Variant \ref{var:gradedmixed}), whose underlying graded-mixed complex is given by
$$
\mc{L}\mf{g}(-1) = \mf{n}[-1] \qquad \qquad \mc{L}\mf{g}(0)=\mf{g}
$$
with $d\colon \mc{L}\mf{g}(-1)\rt \mc{L}\mf{g}(0)[-1]$ the obvious inclusion.
\end{lemma}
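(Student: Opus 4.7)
The strategy is to realise the graded-mixed structure on $\mc{L}\mf{g} = \mf{g}\boxtimes k[\epsilon_{-1}]$ by upgrading $k[\epsilon_{-1}]$ to a graded-mixed cdga and extending Construction \ref{la:cons:simplicialcotensor} verbatim to this enriched setting, so that the graded-mixed $L_\infty$-algebroid structure on $\mc{L}\mf{g}$ arises by functoriality rather than by direct verification.

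To this end, I would first equip $k[\epsilon_{-1}]$ with the structure of a graded-mixed cdga $B$ by placing $\epsilon_{-1}$ in weight $-1$ (so that $k\subset B$ sits in weight $0$) and defining the mixed differential by $d(\epsilon_{-1})=1$. The degree-weight bookkeeping is consistent with the convention $d\colon V(p)\to V(p+1)[-1]$, since $\epsilon_{-1}$ has internal degree $-1$ and weight $-1$ while $1\in k$ has degree $0$ and weight $0$. The Leibniz rule for $d$ is automatic because $\epsilon_{-1}^2=0$.

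Next, I would observe that Construction \ref{la:cons:simplicialcotensor} extends word-for-word to the category of graded-mixed objects, since it uses only tensor products, the anchor map, and a pullback, all of which make sense in the symmetric monoidal model category $\mc{M}_k$. Viewing $\mf{g}$, $A$, and $T_A$ as graded-mixed objects concentrated in weight $0$ with zero mixed differential, the formula
$$
\mf{g}\boxtimes B \;=\; \bigl(\mf{g}\otimes_k B\bigr)\times_{T_A\otimes_k B} T_A
$$
then defines $\mf{g}\boxtimes B$ as a pullback of graded-mixed $L_\infty$-algebroids over $A$. Forgetting the mixed differential recovers the ordinary $L_\infty$-algebroid $\mf{g}\boxtimes k[\epsilon_{-1}] = \mc{L}\mf{g}$. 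In weight $0$ the pullback reduces to $\mf{g}\times_{T_A}T_A=\mf{g}$, while in weight $-1$ it is the kernel of $\rho\,\epsilon_{-1}\colon \mf{g}\,\epsilon_{-1}\to T_A\,\epsilon_{-1}$, namely $\mf{n}\,\epsilon_{-1}\cong \mf{n}[-1]$. The mixed differential on the pullback is induced from $d(\epsilon_{-1})=1$ and so sends $Y\epsilon_{-1}$ to $\pm Y$ for $Y\in \mf{n}$, realising the desired inclusion $\mf{n}[-1]\hookrightarrow \mf{g}[-1]$.

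The main point left to verify is that the mixed differential really is a derivation of all the $L_\infty$-brackets on $\mf{g}\boxtimes B$ and commutes with the anchor. This is where the Koszul-sign bookkeeping would live, but both compatibilities follow formally: the $L_\infty$-operations and anchor on $\mf{g}\otimes_k B$ are defined as $B$-multilinear extensions of those on $\mf{g}$, so the Leibniz identity $d(b_1 b_2)=d(b_1)b_2 \pm b_1 d(b_2)$ on $B$ propagates to the required compatibility on $\mf{g}\otimes_k B$ and hence on the pullback. Anchor compatibility is immediate, because $T_A$ carries the trivial mixed structure and the weight $-1$ piece of $\mc{L}\mf{g}$ lies in $\ker\rho$ by construction. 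Fibrancy of $\mf{g}$ is used only to identify $\mf{g}\boxtimes k[\epsilon_{-1}]$ with the free loop space $\mc{L}\mf{g}$; the algebraic construction of the graded-mixed refinement proceeds uniformly for all $\mf{g}$.
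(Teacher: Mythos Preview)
Your proposal is correct and gives a clean, conceptual argument, but it differs in spirit from the paper's own proof. The paper proceeds by direct inspection: it simply observes that brackets of elements in $\mc{L}\mf{g}(0)=\mf{g}$ with one element in $\mc{L}\mf{g}(-1)=\mf{n}[-1]$ stay in $\mc{L}\mf{g}(-1)$ (since $\mf{n}$ is an ideal), that brackets with two or more entries from $\mf{n}[-1]$ vanish (square-zero), and that $d[X_1,\dots,X_n,\xi]=[X_1,\dots,X_n,d\xi]$ holds because both sides are the $(n+1)$-fold bracket in $\mf{g}$. This is a three-line check against the explicit description $\mc{L}\mf{g}\cong \mf{g}\oplus\mf{n}[-1]$.

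Your approach instead makes the graded-mixed structure functorial: you endow $k[\epsilon_{-1}]$ itself with a graded-mixed cdga structure (weight of $\epsilon_{-1}$ equal to $-1$, $d\epsilon_{-1}=1$) and then run Construction~\ref{la:cons:simplicialcotensor} internally to $\mc{M}_k$. This buys you a uniform explanation of both the grading and the mixed differential at once, and it dovetails nicely with the remark immediately following the lemma in the paper, where the same structure is recovered from the $\mc{H}$-coaction on $k[\epsilon_{-1}]$; your argument is essentially the ``graded-mixed'' repackaging of that comodule picture. The price is that you must verify that $\boxtimes$ really lifts to graded-mixed objects (which you do, via the Leibniz rule for $d_B$ and $A$-linearity of $1\otimes d_B$), whereas the paper's hands-on check avoids any such general setup. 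Both arguments are short; yours generalises more readily, the paper's is more self-contained.
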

\begin{proof}
The bracket of an element in $\mc{L}\mf{g}(p)$ with elements in $\mc{L}\mf{g}(0)$ is again contained in $\mc{L}\mf{g}(p)$ and the bracket of at least two elements in $\mc{L}\mf{g}(-1)$ is zero. It follows that the $L_\infty$-algebroid structure is compatible with the grading. It is compatible with the mixed structure because for any $x_i\in\mf{g}, \xi\in \mf{n}[-1]$, we have that
$$
d[x_1, \dots, x_n, \xi] = [x_1, \dots, x_n, d\xi]
$$
is simply given by the $(n+1)$-fold bracket in $\mf{g}$.
\end{proof}
\begin{remark}
The graded-mixed structure on $\mc{L}\mf{g}$ can be understood as follows. Recall (see e.g.\ \cite{ben12}, \cite{pan13}) that graded mixed complexes can be viewed as dg-comodules over the Hopf algebra
$$
\mc{H} := k[t, t^{-1}, \epsilon_{-1}] \qquad \Del(t) = t\otimes t\qquad \Del(\epsilon_{-1}) = \epsilon_{-1}\otimes t.
$$
Here $t$ has degree $0$ and $\epsilon_{-1}$ has (homological) degree $-1$. There is a coaction
$$\xymatrix{
k[\epsilon_{-1}] \ar[r] & k[\epsilon_{-1}]\otimes_k \mc{H}; \hspace{4pt}\epsilon_{-1}\ar@{|->}[r] & \epsilon_{-1}\otimes t
}$$
which induces a coaction of $\mc{H}$ on the free loop space $\mc{L}\mf{g}$ of the form
$$\xymatrix{
\mc{L}\mf{g} = \mf{g}\boxtimes k[\epsilon_{-1}] \ar[r] & \mf{g}\boxtimes \big(k[\epsilon_{-1}]\otimes_k \mc{H}\big)\cong \mc{L}\mf{g}\boxtimes \mc{H},
}$$
simply by restricting the canonical $\mc{H}$-comodule structure on $\mf{g}\otimes k[\epsilon_{-1}]$. Unwinding the definitions, this coaction of $\mc{H}$ on $\mc{L}\mf{g}$ corresponds to the graded mixed structure on $\mc{L}\mf{g}$ described in Lemma \ref{lem:gradedmixedstructure}.
\end{remark}
\begin{remark}\label{rem:loops}
The mixed structure on $\mc{L}\mf{g}$ can also be interpreted homotopy-theoretically at follows. The sub-Hopf algebra $k[\epsilon_{-1}]\subseteq \mc{H}$ can be identified with the cohomology ring $H^*(S^1)$, which inherits a Hopf algebra structure from the group multiplication $\mu\colon S^1\times S^1\rt S^1$. The mixed structure on $\mc{L}\mf{g}$ is then encoded by the coaction
$$\xymatrix{
\mc{L}\mf{g} = \mf{g}\boxtimes H^*(S^1)\ar[rr]^-{\mf{g}\boxtimes H^*(\mu)} & & \mf{g}\boxtimes H^*(S^1\times S^1) \cong \mc{L}\mf{g}\boxtimes H^*(S^1).
 }$$ 
In fact, the Hopf algebra $k[\epsilon_{-1}]$ provides a rational model for $S^1$, together with its group structure \cite{toe11} (see also \cite[Remark 3.16]{ben12}). One can therefore think of the mixed structure on $\mc{L}\mf{g}$ as a (rational) algebraic incarnation of the $S^1$-action on $\mc{L}\mf{g}$ by rotation of loops. This more topological (rather than algebraic) perspective is used in derived geometry, cf.\ Remark \ref{rem:diffformsvsloops}.
\end{remark}
The Chevalley-Eilenberg complex of a graded mixed $L_\infty$-algebroid over $A$ can be computed internally to graded mixed complexes, using exactly the same formulas as in Definition \ref{fm:def:reducedcecomplex}. Applying this to the free loop space of an $L_\infty$-algebroid $\mf{g}$, we obtain the following:
\begin{example}\label{dth:con:weilalgebrasimple}
Let $\rho\colon \mf{g}\rt T_A$ be a fibrant $L_\infty$-algebroid and let $\mc{L}\mf{g}=\mf{g}\oplus \mf{n}[-1]$ be its free loop space. The Chevalley-Eilenberg complex $C^*(\mc{L}\mf{g})$ with coefficients in $A$ decomposes into graded pieces
$$
C^*(\mc{L}\mf{g})(q) := \ul{\Hom}{}_A\Big(\mm{Sym}_A\mf{g}[1]\otimes \mm{Sym}^q_A\mf{n}, A\Big)
$$
consisting of maps that are polynomial of degree $q$ in $\mf{n}$. This is indeed stable under the differential \eqref{fm:eq:cediff} because $\mf{n}[-1]\subseteq \mc{L}\mf{g}$ is square zero. Furthermore, $C^*(\mc{L}\mf{g})$ carries a mixed structure $d\colon C^*(\mc{L}\mf{g})(q)\rt C^*(\mc{L}\mf{g})(q+1)[-1]$ given by 
$$
(d\alpha)(x_1, \dots, x_n, \xi_1, \dots, \xi_q) = \sum_{j=1}^q \alpha(x_1, \dots, x_n, \sigma(\xi_j), \xi_{1}, \dots, \xi_q)
$$
where $x_i\in\mf{g}$, $\xi_j \in \mf{n}$ and $\sigma\colon \mf{n}\rt \mf{g}$ is the inclusion, which gave the mixed structure on $\mc{L}\mf{g}$. The resulting graded mixed complex $C^*(\mc{L}\mf{g})$ has the structure of a graded mixed cdga, with multiplication given by the usual product of forms.
\end{example}
\begin{remark}
The Chevalley-Eilenberg complex of graded mixed $L_\infty$-algebroids is only homotopy invariant when applied to $\mf{h}$ for which each $\mf{h}(p)$ is a cofibrant dg-$A$-module. For free loop spaces $\mc{L}\mf{g}$, this is the case when $\mf{g}\rt T_A$ is a fibration between cofibrant dg-$A$-modules. For the remainder of this section, we will therefore assume that \emph{$T_A$ is a cofibrant dg-$A$-module}. One can always replace $T_A$ by a weakly equivalent dg-Lie algebroid for which this is the case. 
\end{remark}
For sufficiently nice dg-Lie algebroids, the mixed graded complex $C^*(\mc{L}\mf{g})$ has a more traditional description as the \emph{Weil algebra} $W(\mf{g})$ of $\mf{g}$ \cite{aba11}. In fact, $W(\mf{g})$ can also be used to describe the cohomology of $\mc{L}\mf{g}$ when $\mf{g}$ is not fibrant.
\begin{construction}\label{dth:con:weilalgebra}
Suppose that $T_A$ is a cofibrant dg-$A$-module and consider the graded mixed complex
$$
\widehat{\dR}(A)(p) = \ul{\Hom}_A\Big(\mm{Sym}_A^p\big(T_A[-1]\big), A\Big)
$$
with mixed structure given by the de Rham differential (without Koszul signs)
$$
d_\dR\omega(v_1, \cdots, v_n) = \sum_i v_i\big(\omega(v_1, \dots, v_n)\big) - \sum_{i<j} \omega([v_i, v_j], v_1, \dots, v_n).
$$
The result is a graded mixed cdga $\widehat{\dR}(A)$, to which the usual Cartan calculus of differential forms can be applied: in addition to the de Rham differential, every $v\in T_A$ yields operators 
$$\xymatrix{
\iota_v\colon \widehat{\dR}(A)\ar[r] & \widehat{\dR}(A)[1] & \mc{L}_v = d\iota_v + \iota_v d\colon \widehat{\dR}(A)\ar[r] & \widehat{\dR}(A)
}$$
where $\iota_v$ is given by $(\iota_v\omega)(v_1, \cdots, v_n)= \omega(v, v_1, \dots, v_n)$.

Let $\rho\colon \mf{g}\rt T_A$ be a dg-Lie algebroid over $T_A$ and let $\mf{g}\oplus \mf{g}[-1]$ be the square zero extension of its underlying dg-Lie algebra (over $k$) by the shifted adjoint representation. There is a canonical representation of $\mf{g}\oplus \mf{g}[-1]$ on $\widehat{\dR}(A)$, where an element $(x, \xi)\in \mf{g}\oplus \mf{g}[-1]$ acts by the derivation
$$
(x, \xi)\cdot \omega = \mc{L}_{\rho(x)}\omega + \iota_{\rho(\xi)}\omega
$$
Consider the graded subalgebra of the associated Chevalley-Eilenberg complex consisting of maps $\mm{Sym}_k(\mf{g}[1]\oplus \mf{g})\rt \widehat{\dR}(A)$ that are $A$-multilinear in $\mf{g}[1]\oplus \mf{g}$
$$
\widehat{W}(\mf{g})\subseteq \mc{A}:=C^*\Big(\mf{g}\oplus \mf{g}[-1], \widehat{\dR}(A)\Big).
$$
This subalgebra is closed under the Chevalley-Eilenberg differential by \cite[Proposition 3.5]{aba11}, where it is called the vertical differential. In fact, unwinding the definitions yields the following description of $\widehat{W}(\mf{g})$. Let 
$$
\tilde{\mf{n}}=\mm{fib}(\mf{g}\rt T_A)=\mf{g}\oplus T_A[-1]
$$
denote the mapping fibre of the anchor map of $\mf{g}$. Then $\widehat{W}(\mf{g})$ factors as
$$
\widehat{W}(\mf{g})=\prod_q W^q(\mf{g}) = \prod_q \ul{\Hom}_A\Big(\mm{Sym}_A\mf{g}[1]\otimes_A \mm{Sym}^q\big(\tilde{\mf{n}}\big), A\Big).
$$
For $x_i\in\mf{g}[1], \xi_j\in \mf{g}$ and $v_k\in T_A[-1]$, the differential is given by
\begin{align*}
(\dau\alpha)(x, \xi, v) &= \dau_A\big(\alpha(x, \xi, v)\big) - \alpha\big(\dau_{\mm{lin}}(x, \xi, v)\big) + \sum_i x_i\cdot \alpha(\dots) \\
&-\sum_{i<i'} \alpha([x_i, x_{i'}], \dots)- \sum_{i, j}\alpha([x_i, \xi_j], \dots) -\sum_{i, k} \alpha([x_i, v_k], \dots).
\end{align*}
Here the dots indicate all variables $x_i, \xi_j$ and $v_k$ not appearing before. Furthermore, $\dau_{\mm{lin}}(x, \xi, v)$ is the ($A$-linear) differential on $\mm{Sym}_A(\mf{g}[1]\oplus \tilde{\mf{n}})$ induced by the differentials on $\mf{g}$ and $\tilde{\mf{n}}$. The brackets
$$
[x_i, x_{i'}]\in \mf{g}[1]\qquad\quad [x_i, \xi_j]\in \mf{g}\qquad\quad [x_i, v_k]\in T_A[-1]
$$
arise from the $k$-linear Lie algebra representation of $\mf{g}$ on itself and on the mapping fibre $\tilde{n}$. In particular, the Chevalley-Eilenberg differential preserves each factor $W^q(\mf{g})$. In addition, there are derivations $d\colon W^q(\mf{g})\rt W^{q+1}(\mf{g})[-1]$, given (modulo Koszul signs) by
$$
(d\alpha)(x, \xi, -) = d_\dR(\alpha(x, \xi, -)) + \sum_{j} \alpha(x, \sigma(\xi_j), \xi, -) \quad \in \widehat{\dR}(A).
$$
In the second term, $\sigma(\xi_j)\in \mf{g}[1]$ indicates the element $\xi_j$, considered as an element in $\mf{g}[1]$ instead of $\mf{g}$. In other words, we replace one of the variables $\xi_j$ by the corresponding `$x$-variable', leaving the rest of the variables the same. The map $d$ is a chain map which squares to zero, since the de Rham differential on $\widehat{\dR}(A)$ does. 
\end{construction}
\begin{definition}
The \emph{Weil algebra} of a dg-Lie algebroid $\mf{g}$ over $A$ is the graded mixed cdga from Construction \ref{dth:con:weilalgebra}
$$
W(\mf{g}) =\left(\bigoplus_q W^q(\mf{g})\subseteq \widehat{W}(\mf{g}), \dau, d\right).
$$
\end{definition}
\begin{lemma}\label{lem:weilishomotopyinvariant}
Suppose that $T_A$ is a cofibrant dg-$A$-module and let $f\colon \mf{g}\rt \mf{h}$ be a weak equivalence between $A$-cofibrant dg-Lie algebroids over $A$. Then restriction along $f$ induces a weak equivalence of graded mixed cdgas $W(\mf{h})\rt W(\mf{g})$.
\end{lemma}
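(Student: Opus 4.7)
The plan is to decompose $W$ by weight, on each weight introduce a decreasing filtration by polynomial degree in $\mf{g}[1]$, verify the quasi-isomorphism on associated graded pieces, and propagate to the total complex via a completeness argument.

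Since the weight decomposition $W(\mf{g}) = \bigoplus_{q\geq 0} W^q(\mf{g})$ and the mixed differential $d\colon W^q \rt W^{q+1}[-1]$ are natural in $\mf{g}$, it will suffice to show that each $f^*\colon W^q(\mf{h}) \rt W^q(\mf{g})$ is a quasi-isomorphism of dg-$A$-modules. I will endow $W^q(\mf{g})$ with the decreasing filtration
$$F^nW^q(\mf{g}) = \Big\{\alpha\ :\ \alpha \text{ vanishes on } \mm{Sym}_A^p\mf{g}[1]\otimes_A\mm{Sym}_A^q\tilde{\mf{n}} \text{ for all } p<n\Big\}.$$
Inspecting the Chevalley-Eilenberg formula in Construction \ref{dth:con:weilalgebra}, every bracket term $\alpha([-,-],\ldots)$ and every anchor term $X_i\cdot\alpha(\ldots)$ evaluates $\alpha$ on an input of strictly smaller $\mf{g}[1]$-polynomial degree, since in each such term one argument $X_i$ is removed from the arguments of $\alpha$. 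Hence $\dau$ preserves $F^n$, and on the associated graded all bracket and anchor contributions will drop out, so
$$F^n/F^{n+1} \cong \Hom_A\big(\mm{Sym}_A^n\mf{g}[1]\otimes_A\mm{Sym}_A^q\tilde{\mf{n}},\, A\big)$$
will carry only the internal differential induced from $\dau_A$ and the differentials of $\mf{g}$ and $\tilde{\mf{n}}$. The filtration is complete Hausdorff, in the sense that $\bigcap_nF^n = 0$ and the natural map $W^q \rt \varprojlim_n W^q/F^n$ is an isomorphism.

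Next I would verify that $f^*$ is a quasi-isomorphism on each $F^n/F^{n+1}$. Because $T_A$, $\mf{g}$ and $\mf{h}$ are cofibrant dg-$A$-modules, the mapping fibers $\tilde{\mf{g}}$ and $\tilde{\mf{h}}$ are cofibrant dg-$A$-modules and $\tilde{f}\colon \tilde{\mf{g}}\rt\tilde{\mf{h}}$ is a quasi-isomorphism between them. In characteristic zero, $\mm{Sym}_A^n$, $\mm{Sym}_A^q$ and $\otimes_A$ preserve quasi-isomorphisms between cofibrant dg-$A$-modules, and $\Hom_A(-,A)$ preserves quasi-isomorphisms between cofibrant dg-$A$-modules (as $A$ is fibrant). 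Consequently $f^*\colon \mm{gr}^n_FW^q(\mf{h}) \rt \mm{gr}^n_FW^q(\mf{g})$ will be a quasi-isomorphism for every $n\geq 0$.

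Finally I would propagate from the associated graded to the total complex. By induction on $n$ using the five lemma applied to the short exact sequences
$$0\rt F^n/F^{n+1}\rt W^q/F^{n+1}\rt W^q/F^n\rt 0,$$
each $f^*\colon W^q(\mf{h})/F^n \rt W^q(\mf{g})/F^n$ will be a quasi-isomorphism. The transition maps of both towers are surjections, hence fibrations in $\dgMod_A$, so the towers $\{W^q(-)/F^n\}_n$ are Reedy fibrant in $\Fun(\field{N}^{\op},\dgMod_A)$; since the inverse limit is a right Quillen functor for the Reedy model structure, it preserves the levelwise quasi-isomorphism between Reedy fibrant towers, yielding $W^q(\mf{h}) = \varprojlim_n W^q(\mf{h})/F^n \rt \varprojlim_n W^q(\mf{g})/F^n = W^q(\mf{g})$ as a quasi-isomorphism. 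The hard part will be confirming that the anchor term $X_i\cdot\alpha(\ldots)$ in the Chevalley-Eilenberg differential really does decrease the $\mf{g}[1]$-polynomial degree, so that the filtration is preserved and the associated graded carries only the internal differential; this depends on reading the formula in Construction \ref{dth:con:weilalgebra} with the standard CE convention that $X_i$ is removed from the arguments of $\alpha$.
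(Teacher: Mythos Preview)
Your proposal is correct and follows essentially the same route as the paper: filter by polynomial degree in $\mf{g}[1]$, observe that the bracket and anchor terms in the differential strictly lower this degree so that the associated graded carries only the internal differential, then use cofibrancy of $\mf{g}$, $\mf{h}$, $T_A$ (hence of the mapping fibers) to get a quasi-isomorphism on each graded piece and pass to the limit. Your completeness/Reedy-fibrant-tower argument spells out in detail what the paper condenses into the single line ``so that the map on limits $W(\mf{h})\rt W(\mf{g})$ is a weak equivalence as well.''
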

\begin{proof}
For every $A$-cofibrant dg-Lie algebroid $\mf{g}$, the mapping fibre $\tilde{\mf{n}}$ of its anchor map is a cofibrant dg-$A$-module. Furthermore, a weak equivalence between $A$-cofibrant dg-Lie algebroids induces a weak equivalence between their mapping fibres. Observe that the Weil algebra $W(\mf{g})$ is the limit of the sequence of graded mixed cdgas
$$
W(\mf{g})^{\leq p}=\bigoplus_q \Hom_A\Big(\mm{Sym}_A^{\leq p} \mf{g}[1]\otimes \mm{Sym}_A^q\tilde{\mf{n}}, A\Big)
$$
consisting of quotients by polynomials of degree $>p$ in the variables $\mf{g}[1]$. The associated graded of this filtration can be identified with 
$$
\bigoplus_q \ul{\Hom}_A\Big(\mm{Sym}^p_A \mf{g}[1]\otimes \mm{Sym}^q_A\tilde{\mf{n}}, A\Big).
$$
Each of these summands carries a differential obtained from the $A$-linear differentials of $\mf{g}$ and $\tilde{\mf{n}}$ by tensoring and dualising. It follows that $f\colon \mf{g}\rt \mf{h}$ induces a weak equivalence between the associated graded of $W(\mf{h})$ and $W(\mf{g})$, so that the map on limits $W(\mf{h})\rt W(\mf{g})$ is a weak equivalence as well.
\end{proof}
\begin{proposition}
Suppose that $T_A$ is a cofibrant dg-$A$-module and let $\mf{g}$ be a fibrant-cofibrant dg-Lie algebroid over $A$. Restriction along the canonical map $\mf{g}\oplus \ker(\rho)[-1]\rt \mf{g}\oplus \mm{fib}(\rho)[-1]$ induces a weak equivalence of graded mixed cdgas
$$\xymatrix{
W(\mf{g})\ar[r] & C^*(\mc{L}\mf{g}).
}$$
In particular, the Weil algebra of an $A$-cofibrant dg-Lie algebroid $\mf{g}$ computes the (derived) Chevalley-Eilenberg complex of its free loop space $\mc{L}\mf{g}$.
\end{proposition}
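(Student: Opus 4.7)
The plan is to realize both sides as Chevalley-Eilenberg complexes of graded-mixed $L_\infty$-algebroids built on quasi-isomorphic underlying complexes, and then apply a homotopy-invariance argument in the spirit of Lemma \ref{lem:weilishomotopyinvariant}.

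The main step — and the principal obstacle — is to identify $W(\mf{g})$ with $C^*(\mf{g}^W)$ as graded-mixed cdgas, where $\mf{g}^W$ is the square-zero extension of $\mf{g}$ by the shifted mapping fiber $\tilde{\mf{n}}[-1]$ (with $\mf{g}$ of weight $0$, $\tilde{\mf{n}}[-1]$ of weight $-1$, and mixed differential the canonical inclusion $\tilde{\mf{n}}[-1]\hookrightarrow\mf{g}[-1]$). To build $\mf{g}^W$ one first extends the adjoint $\mf{g}$-representation from $\mf{n}$ to all of $\tilde{\mf{n}}=\mf{g}\oplus T_A[-1]$ (acting on the $T_A$-summand by the commutator of derivations); compatibility with the mapping-fiber differential uses only that $\rho$ is a dg-Lie algebra map. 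Then, using the identification $\widehat{\dR}(A)\cong \ul{\Hom}_A(\mm{Sym}_A T_A[-1],A)$ to absorb the $T_A[-1]$-factor into the single $\mm{Sym}_A$ defining the CE complex, one obtains an isomorphism of graded vector spaces $W(\mf{g})\cong C^*(\mf{g}^W)$. The matching of differentials is then a termwise reading of formula \eqref{fm:eq:cediff} against the explicit Weil differential of Construction \ref{dth:con:weilalgebra}: the brackets $[X_i,\xi_j]$ and $[X_i,v_k]$ realize the adjoint $\mf{g}$-action on $\mf{g}$ and on $T_A$ respectively, and the Weil mixed differential corresponds to the inclusion $\tilde{\mf{n}}[-1]\hookrightarrow\mf{g}[-1]$. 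Verifying all signs and symmetries is the bulk of the bookkeeping in this step.

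Next, since $\mf{g}$ is fibrant, the anchor $\rho$ is surjective and the canonical inclusion $\mf{n}=\ker(\rho)\hookrightarrow\mm{fib}(\rho)=\tilde{\mf{n}}$ is a quasi-isomorphism of cofibrant dg-$A$-modules (using cofibrancy of $T_A$ and $\mf{g}$). This inclusion is $\mf{g}$-equivariant and intertwines the inclusions into $\mf{g}$ that give the mixed structures, so it extends to a map of graded-mixed dg-Lie algebroids $\iota\colon\mc{L}\mf{g}=\mf{g}\oplus\mf{n}[-1]\rt\mf{g}^W$, and the induced map $C^*(\iota)\colon W(\mf{g})\rt C^*(\mc{L}\mf{g})$ is precisely the one in the statement. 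To show $C^*(\iota)$ is a weak equivalence, filter both sides by polynomial degree $\leq p$ in the $\mf{g}[1]$-variables, as in Lemma \ref{lem:weilishomotopyinvariant}. On the $p$-th associated graded all bracket and action terms drop out, leaving a product over $q$ of the maps
\[ \ul{\Hom}_A\bigl(\mm{Sym}^p_A\mf{g}[1]\otimes_A\mm{Sym}^q_A\tilde{\mf{n}},A\bigr)\rt \ul{\Hom}_A\bigl(\mm{Sym}^p_A\mf{g}[1]\otimes_A\mm{Sym}^q_A\mf{n},A\bigr) \]
induced by $\mf{n}\hookrightarrow\tilde{\mf{n}}$, equipped only with the $A$-linear differentials of $\mf{g}$, $\mf{n}$ and $\tilde{\mf{n}}$. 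In characteristic zero, symmetric powers and tensor products of cofibrant dg-$A$-modules preserve quasi-isomorphisms, so each term is a quasi-isomorphism, hence so is $C^*(\iota)$ on the filtered limits. The concluding ``in particular'' follows by replacing an arbitrary $A$-cofibrant $\mf{g}$ by a fibrant-cofibrant replacement $\tilde{\mf{g}}$ and invoking both the homotopy invariance of $W(-)$ (Lemma \ref{lem:weilishomotopyinvariant}) and the fact that $\mc{L}(-)$ is a homotopy functor on $L_\infty$-algebroids.
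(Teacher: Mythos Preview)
Your filtration argument in the second half is exactly the paper's proof: filter both sides by polynomial degree in $\mf{g}[1]$, identify the associated graded as induced by the inclusion $\mf{n}=\ker(\rho)\hookrightarrow\tilde{\mf{n}}=\mm{fib}(\rho)$, and conclude by cofibrancy. That part is fine.

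The problem is your first step. You claim that the adjoint action extends to a $\mf{g}$-representation on $\tilde{\mf{n}}=\mf{g}\oplus T_A[-1]$, so that $\mf{g}^W=\mf{g}\oplus\tilde{\mf{n}}[-1]$ is a graded-mixed dg-Lie algebroid whose Chevalley--Eilenberg complex is $W(\mf{g})$. This fails: a $\mf{g}$-representation must satisfy $\nabla_{aX}=a\nabla_X$, but for $Y\in\mf{g}\subset\tilde{\mf{n}}$ one has
\[
[aX,Y]=a[X,Y]-\rho(Y)(a)\,X,
\]
and similarly $[a\rho(X),v]=a[\rho(X),v]-v(a)\rho(X)$ for $v\in T_A$. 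The correction terms vanish only when $\rho(Y)=0$ and $v=0$, i.e.\ only on $\mf{n}\subset\tilde{\mf{n}}$. So neither summand of $\tilde{\mf{n}}$ is a $\mf{g}$-representation, the Leibniz rule on $\mf{g}^W$ fails, and formula \eqref{fm:eq:cediff} does not apply to it. This is precisely why Construction \ref{dth:con:weilalgebra} builds $\what{W}(\mf{g})$ as the $A$-multilinear subalgebra of a \emph{$k$-linear} Chevalley--Eilenberg complex and invokes \cite[Proposition 3.5]{aba11} for closure under the differential---the paper explicitly says the brackets $[X_i,\xi_j]$, $[X_i,v_k]$ come from the $k$-linear Lie algebra action of $\mf{g}$ on $\tilde{\mf{n}}$.

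The fix is simply to drop the $\mf{g}^W$ packaging and work directly with $W(\mf{g})$ as defined; your filtration argument then goes through unchanged and matches the paper's proof verbatim.
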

\begin{proof}
The second part of the assertion follows from the fact that $W(\mf{g})$ is homotopy invariant, by Lemma \ref{lem:weilishomotopyinvariant}. For the first part, note that $W(\mf{g})$ and $C^*(\mc{L}\mf{g})$ are both filtered by polynomial degree in $\mf{g}[1]$, as in the proof of Lemma \ref{lem:weilishomotopyinvariant}. The restriction map $W(\mf{g})\rt C^*(\mc{L}\mf{g})$ preserves this filtration, and is given on the associated graded by a map
$$\xymatrix@C=1.7pc{
\ul{\Hom}_A\hspace{-1pt}\Big(\hspace{-1pt}\mm{Sym}^p_A \mf{g}[1]\otimes \mm{Sym}^q_A\tilde{\mf{n}}, A\hspace{-1pt}\Big)\ar[r] & \ul{\Hom}_A\hspace{-1pt}\Big(\hspace{-1pt}\mm{Sym}^p_A \mf{g}[1]\otimes \mm{Sym}^q_A\big(\hspace{-1pt}\ker(\rho)\big), A\hspace{-1pt}\Big).
}$$
This map is obtained from the inclusion $\ker(\rho)\rt \mm{fib}(\rho)$ by tensoring and dualising. Since this inclusion is a weak equivalence between cofibrant dg-$A$-modules, the result follows.
\end{proof}
\begin{remark}\label{rem:diffformsvsloops}
Under certain restrictions on $A$, there is an equivalence between the $\infty$-category of dg-Lie algebroids over $A$ and certain `formal derived stacks', or `formal moduli problems', around $\spec(A)$ \cite{nui17c}. Under this equivalence, the Chevalley-Eilenberg complex of $\mf{g}$ corresponds to the dg-algebra of functions on the associated formal derived stack. 
When $C^*(\mf{g})$ is considered as an algebra of functions, the Weil algebra $W(\mf{g})$ closely resembles the corresponding (graded-mixed) complex of \emph{differential forms}. This is not unexpected: in differential geometry, the Weil algebra of an ordinary Lie algebroid is indeed closely related to the complex of differential forms on the associated Lie groupoid \cite{aba11}.

More precisely, recent work in derived geometry \cite{ben12,pan13,toe11} often models differential forms by `functions on (derived) loop spaces', with the de Rham differential coming from loop rotation. For a formal derived stack $X$, this geometric description can be made explicit in terms of algebra, using the computations in this section. Indeed, let $\mf{g}$ denote the dg-Lie algebroid associated to $X$ under the equivalence of \cite{nui17c}. Then the algebra of functions on $\mc{L}X$ is given by $C^*\big(\mc{L}(\mf{g})\big)$, or by the Weil algebra $W(\mf{g})$. By Remark \ref{rem:loops}, the loop rotation can be described concretely by the mixed structure of Lemma \ref{lem:gradedmixedstructure}.
\end{remark}

\bibliographystyle{abbrv}
\bibliography{bibliography_hala}

\end{document}